\numberwithin{equation}{section}
\apptocmd{\thebibliography}{}{}{}
\newcommand{\R}{{\mathbb R}}
\theoremstyle{plain}
\newtheorem{theorem}{Theorem}[section]
\newtheorem{corollary}[theorem]{Corollary}
\newtheorem{proposition}[theorem]{Proposition}
\newtheorem{reminder*}{[theorem]Reminder}
\newtheorem{details*}[theorem]{Details}
\newtheorem{comm*}{Comment}
\newtheorem{definition*}{[theorem]Definition}
\newtheorem{notation*}{Notation}
\newtheorem{remark}[theorem]{Remark}
\newcommand{\vertii}[1]{{\left\vert\kern-0.3ex\left\vert #1 
    \right\vert\kern-0.3ex\right\vert}}
\newcommand{\vertiii}[1]{{\left\vert\kern-0.3ex\left\vert\kern-0.3ex\left\vert #1 
    \right\vert\kern-0.3ex\right\vert\kern-0.3ex\right\vert}}
\newcommand*\circled[1]{\tikz[baseline=(char.base)]{\node[shape=circle,draw,inner sep=2pt] (char) {#1};}}
\title{One-dimensional inelastic collapse of four particles:\\asymmetric collision sequences and spherical billiard reduction}
\author{Th\'eophile Dolmaire, Eleni H\"ubner-Rosenau}
\date{\today}
\begin{document}

\maketitle

\begin{abstract}
\noindent
We consider a one-dimensional system of four inelastic hard spheres, colliding with a fixed restitution coefficient $r$, and we study the inelastic collapse phenomenon for such a particle system. We study a periodic, asymmetric collision pattern, proving that it can be realized, despite its instability. We prove that we can associate to the four-particle dynamical system another dynamical system of smaller dimension, acting on $\{1,2,3\} \times \mathbb{S}^2$, and that encodes the collision orders of each trajectory. We provide different representations of this new dynamical system, and study numerically its $\omega$-limit sets. In particular, the numerical simulations suggest that the orbits of such a system might be quasi-periodic.
\end{abstract}

\textbf{Keywords.} Inelastic Collapse; Inelastic Hard Spheres; Hard Ball Systems; Billiard Systems; Particle Systems; $\omega$-Limit Sets; Dynamical Systems.\\

\section{Introduction}

\subsection{The model and the main results}
\noindent
In the present article we consider a system of four identical inelastic hard spheres, evolving on the one-dimensional line $\mathbb{R}$, and such that the collision law is defined with a fixed restitution coefficient $r \in\ ]0,1[$. In other words, when a collision occurs, the velocities $v,v_* \in\mathbb{R}$ of two colliding particles are immediately changed into $v',v_*' \in\mathbb{R}$ according to the relations:
\begin{align}
\label{EQUAT_Law_Collision_}
\left\{
\begin{array}{ccc}
v' + v'_* &=& v+v_*,\\
v'-v'_* &=& -r(v-v_*).
\end{array}
\right.
\end{align}
We study the phenomenon of inelastic collapse for such a system, which corresponds to a situation where an infinite number of collisions take place in finite time. Such a phenomenon, still not well understood although of central importance, is characteristic of inelastic particle systems, in particular when the restitution coefficient is fixed.\\
More specifically, we will study some particular inelastic collapses, taking place according to a prescribed order of collisions, involving specific pairs of particles. We will focus on collision orders that are periodic, but not symmetric, in the sense that the two external particles of the system are not involved in the same number of collisions during a period, contrary to what was done so far in the literature.\\
Besides, in the same spirit of \cite{CoGM995}, we will study the billiard representation of the particle system. We will identify a hidden conserved quantity of the system, which will allow us to reduce the dimension of the system. Thanks to this reduction, we prove that the order of the collisions of any trajectory can be recovered from the study of a billiard on a portion of the unit sphere, which can also be seen as a dynamical system defined on $\{1,2,3\} \times \mathbb{S}^2$. We call this dynamical system the spherical reduction mapping. Numerical investigations on such a system enable to recover the existing results of \cite{CDKK999}, and allow in addition to observe the $\omega$-limit sets of the spherical reduction mapping. One of the most intriguing observation is the apparent quasi-periodic behaviour of the orbits, along possible invariant tori.

\subsection{Motivation and derivation of the collision model}
\noindent
The inelastic hard sphere system with constant restitution coefficient is one of the simplest microscopic models to describe granular media. Therefore, understanding the properties of such a particle system is of utmost importance to reach a better understanding of the kinetic theory of granular materials. In turn, granular materials constitute fundamental models in Physics (the reader may find in \cite{CHMR021} a recent mathematical survey on the theory of granular gases): their applications stretch from the description of snow or large collections of grains of sand (see \cite{Kada999}), to clouds of interstellar dust or even the rings of Saturn (see \cite{GrBr984}). In addition, granular materials exhibit fascinating properties, and their behaviour shares some features with solids, liquids and gases at the same time (see \cite{BeJN996}). One typical behaviour of granular media is the spontaneous onset of inhomogeneities (\cite{BrPo004}), that can already be observed at the level of the particle system (\cite{GoZa993}, \cite{McYo996}).\\
\newline
However, let us emphasize that the model describing collisions taking place immediately, with a fixed restitution coefficient, is an idealization of real collisions, that take place more precisely between the time of the first contact between the particles, and the later time when they finally separate. In between, complex mechanisms of deformation act on the particles, during which the kinetic energy of the particles is first converted into potential energy or temperature. Then, the compression of the particles is converted back into a generally different amount of kinetic energy, causing the separation of the particles. A description of this mechanism in terms of forces of compression was proposed by Hertz in \cite{Hert882}, providing a conservative model, adapted later in a dissipative version in \cite{BSHP996}.\\
Nevertheless, the time scale of the collisions is much smaller than the time scale of the evolution of the whole particle system. Therefore, only the relations that give the post-collisional velocities in terms of the pre-collisional velocities are relevant to model the dynamics of the particles. The normal restitution coefficient can then be defined as the ratio of the normal components of the post- and pre-collisional relative velocities between the colliding particles. Such a restitution coefficient is determined from the force of compression for example in \cite{ScPo998}, \cite{RPBS999} or \cite{BrPo004}. Nevertheless, in general, due to friction, a collision modifies not only the normal component of the relative velocity, but also the tangential components. In the same way, one can define and determine the tangential restitution coefficient (\cite{BSPo008}, \cite{SBPo008}), and in general both normal and tangential restitution coefficients are functions of the norm $\vert v-v_* \vert$ of the relative velocity.\\
In any case, in the one-dimensional case the notion of tangential velocity does not make sense, and in the present article, we will not consider such advanced models, with variable (normal) restitution coefficient. Actually, it is well-known that the collision model with a fixed restitution coefficient is already a good approximation of real granular materials (see for instance \cite{BrPo004}).

\subsection{Existing results about inelastic collapse}
\noindent
The question of the well-posedness of the dynamics of the particle system is still an open problem (contrary to the elastic case $r=1$, for which Alexander proved the global in time well-posedness in any dimension for almost every initial configuration of the particles, see \cite{Alex975} and \cite{Alex976}). The dynamics of hard spheres is constructed iteratively: from a configuration, one computes the next collision time, pushes forwards the particles until such a collision, then updates the velocities of the colliding particles, and reiterates the process. In the case of inelastic hard spheres with fixed restitution coefficients, the phenomenon of \emph{inelastic collapse} can occur, when infinitely many collisions take place in finite time. This phenomenon is a major obstruction to obtain a well-posedness result: when inelastic collapse happens, defining the dynamics beyond such a singularity yields an ill-posed problem.\\
\newline
The inelastic collapse was first observed and mathematically described in \cite{ShKa989} and \cite{BeMa990}, in one-dimensional models. Later, the first observation of inelastic collapse in dimension larger than $1$ was obtained numerically in \cite{McYo993}\footnote{Let us mention also \cite{TrBa000}, in which the authors observe numerically inelastic collapse in dimensions $4$, $5$ and $6$. Such an investigation originated in a previous conjecture claiming that inelastic collapse cannot happen in dimension larger than $3$.}. In the same article, the authors identified also that the particles involved in the collapse are distributed along a linear structure. Therefore, and although it can be observed in dimension $2$, the mechanism of inelastic collapse seems intrinsically one-dimensional. For this reason, the study of inelastic collapse in dimension $1$ has attracted a lot of attention, and the present article constitutes a continuation of this study.\\
Let us review briefly the literature concerning the one-dimensional inelastic collapse. Except when it will be mentioned explicitly, all the works that follow studied the collision law \eqref{EQUAT_Law_Collision_}, with fixed restitution coefficient.\\ The case of three particles was discussed in \cite{McYo991}, and \cite{CoGM995}, after which the system can be considered as completely understood. In particular, it is shown that if $r > 7 - 4\sqrt{3}$, no collapse can take place. Below this value, the collapse takes place for a set of initial configurations of positive measure.\\
In the case of four particles, as noticed in \cite{McYo991}, the two-particle system with boundary studied in \cite{BeMa990} can be seen as a symmetric system of four particles (hence, associated to set of zero measure of initial data), and one can show that such a system cannot collapse if $r > 3-2\sqrt{2}$. One of the classical approaches to study inelastic collapse is to determine a priori an order of collisions between the particles, which is periodic, so that the evolution of the velocities of the system is given by a linear iteration. In the case of four particles, if we denote respectively by $\mathfrak{a}$, $\mathfrak{b}$ and $\mathfrak{c}$ the collisions between the pairs \circled{1}-\circled{2}, \circled{2}-\circled{3} and \circled{3}-\circled{4}, contrary to the case of three particles, the order of collisions is not predetermined, and many sequences composed of $\mathfrak{a}$, $\mathfrak{b}$ or $\mathfrak{c}$ are a priori admissible candidates. In \cite{CDKK999}, the approach of Cipra, Dini, Kennedy and Kolan started with numerical simulations to determine if any collision sequence was appearing, which would suggest that there exists a set of initial data of positive measure generating such a sequence. The authors observed indeed some periodic sequences, and (surprisingly) all of them were of the form $(\mathfrak{ab})^n(\mathfrak{cb})^n$, with $n \geq 2$. Then, by mathematical analysis (see also \cite{HuRo023} for an exhaustive discussion of the method), Cipra et al. proved that the periodic sequences $(\mathfrak{ab})^n(\mathfrak{cb})^n$ were indeed feasible and stable, that is, realizable from a set of initial configurations of positive measure (at least for $n$ small). Besides, they observed no other periodic and stable sequence. In addition, the authors discovered that each sequence $(\mathfrak{ab})^n(\mathfrak{cb})^n$ is stable only if $r$ belongs to some particular interval $I_n$, these intervals being disjoint, and converging seemingly towards the critical value $r = 7-4\sqrt{3} \simeq 0.07180$ of the collapsing three-particle system. The upper bound of $I_2$ was found to be $3-2\sqrt{2} \simeq 0.17157$, and the authors did not find any other stable pattern for $r$ larger than this value. Nevertheless, in \cite{CDKK999}, it is proven that there exist initial configurations leading to a collapse, with the collision sequence $\mathfrak{abcb}$, as long as $r < r_\text{exist.}$, with $r_\text{exist.} \simeq 0.19166$ being a root of an explicit polynomial of degree $6$. It might be the critical restitution coefficient for four-particle systems, although there is no proof of such a result. To the best of our knowledge, no other example of stable pattern is known in the literature for four-particle systems.\\
Let us also mention \cite{BeCa999}, in which the existence of a stable collapse (that is, the existence of a set of initial configurations of positive measure leading to the inelastic collapse of four particles) is proven with an approach different from \cite{CDKK999}.\\
Concerning larger systems of particles, only special, and very symmetric constructions of collapsing systems are known. It is not clear if a perturbation of an initial configuration leading to such a symmetric collapse still leads to collapse. Such constructions can be found in \cite{BeCa999}, and also in the recent \cite{ChKZ022}, based on an elegant geometrical construction. In particular, there is no example of a stable collapse for systems composed of five particles or more.\\
Concerning estimates about the critical restitution coefficient $r_\text{crit.}(n)$, that is, the largest restitution coefficient for which a collapse can occur in a system of $n$ particles, the first discussions in this direction can be found in \cite{BeMa990} and \cite{McYo991}, lower and upper bounds are proved in \cite{BeCa999}, and the construction of \cite{ChKZ022} enables also to deduce estimates. However, for $n \geq 4$ the question of the exact value remains open.\\
Let us finally conclude the discussion concerning the one-dimensional particle systems with constant restitution coefficient by mentioning \cite{GrMu996}, in which the authors study a system of three particles evolving on a circle, discovering an extremely rich dynamics.\\
\newline
As for the case of higher dimensions, until recently, the only existing reference about inelastic particle systems was \cite{ZhKa996}, in which the authors derive geometrical necessary conditions on the final positions of the particles for the collapse to occur. Recently, \cite{DoVeAr1} and \cite{DoVeAr2} completed the study of the three-particle system in any dimension, proving in particular in \cite{DoVeAr1} the existence of a set of initial configurations of positive measure, leading to inelastic collapse in dimension $d\geq 2$.\\
\newline
Let us finally mention results concerning analogous models. In \cite{ScZh996}, the authors consider three rotating inelastic particles in dimension $d \geq 2$, and the collision law involves a tangential restitution coefficient in addition to the normal restitution coefficient (generalizing therefore the model of \cite{ZhKa996} and \cite{DoVeAr1}). The authors found that inelastic collapse can occur, even for $r$ close to $1$.\\
As for models with a restitution coefficient depending on $\vert v-v_* \vert$, it is known that accurate models should satisfy $r \rightarrow 1$ as $\vert v-v_* \vert \rightarrow 0$. It has been observed that for such models, inelastic collapse seems impossible (see \cite{BrPo004}, \cite{PoBS003}). Such a result is proved rigorously in \cite{GSBM998} for a system of three one-dimensional particles, evolving either on the real line or on a circle. An analogous result is proved in \cite{DoVeNot}, in which inelastic collisions take place only if the relative velocity is large enough, and such that the loss of kinetic energy per inelastic collision is a fixed quantity.

\subsection{Outline of the article}
\noindent
The plan of the present article is the following. In the second section, we introduce the particle system we will consider. In the third section, using the method introduced in \cite{CDKK999} we study in detail the asymmetric pattern $\mathfrak{ababcb}$. We prove the first main result of this article: such a pattern can be achieved for some initial configurations and appropriate restitution coefficients $r$, but its self-similar realization is never stable. In the fourth section, we represent the trajectories of the four-particle system as a billiard in the first octant in $\mathbb{R}^3$. We prove that all the initial configurations $\left(p(0),q(0)\right)$, (where $p(0)$ denotes the vector of the initial positions, and $q(0)$ the vector of the initial velocities) that span the same plane $\mathcal{P}(0) = \text{Span}\left(p(0),q(0)\right)$ generate trajectories that, at each collision $k$, span the same plane $\mathcal{P}(k)$. This is the second main result of the article, which allows to associate to the four-particle system another dynamical system of smaller dimension, the \emph{spherical reduction mapping}, that encodes all the possible collision orders. In the fifth section, we perform numerical simulation with the spherical reduction mapping, studying in particular the $\omega$-limit sets of the trajectories. We observe a behaviour suggesting that the orbits are quasi-periodic.

\section{Presentation of the model}
\noindent
We consider a system of four inelastic hard spheres evolving on the one-dimensional line, all having the same mass (that we will assume to be equal to $1$ without loss of generality), and all having zero radius (the particles are described by points in $\mathbb{R}$). We will denote the respective positions of the particles  \circled{1} up to \circled{4} at time $t$ by $x_1(t),\dots,x_4(t) \in \mathbb{R}$ and their respective velocities by $v_1(t),\dots,v_4(t) \in \mathbb{R}$. Considering the initial positions, we will label the particles from left to right so that we have in particular:
\begin{align*}
x_1(0) \leq x_2(0) \leq x_3(0) \leq x_4(0).
\end{align*}
Starting from an initial configuration of the system at time $t=0$, with positions $\left(x_1(0),x_2(0),x_3(0),x_4(0)\right)$ and velocities $\left(v_1(0),v_2(0),v_3(0),v_4(0)\right)$, let us describe the dynamics of the particle system. As long as all the particles are separated, that is, when $\vert x_{i}(t) - x_j(t) \vert > 0$ $\forall i\neq j \in \{1,2,3,4\}$, we assume that the particles evolve according to the free dynamics, that is, the particles have all an inertial motion, moving at constant velocity. Therefore, as long as the separation of the particles holds, we have:
\begin{align}
	v_i(t)&=v_i(0) \quad && \forall i \in \{1,2,3,4\} \label{EQUATFree_TrnspVariable_v}\\ 
	x_i(t)&=x_i(0)+t v_i(0) \quad && \forall i \in \{1,2,3,4\} \label{EQUATFree_TrnspVariable_x}.
\end{align}
When the separation does not hold, that is, when there exists a time $t_1 \geq 0$ such that two particles collide, we have: $x_{i+1}(t_1) - x_i(t_1) = 0$ for (at least) one certain index $i \in \{1,2,3\}$. In such a case, we will assume that the respective velocities of the particles \circled{$i$} and \circled{$j$}, with $j=i+1$, are immediately changed, according to the following laws:
\begin{align}
v_i(t_1^+) + v_{i+1}(t_1^+) &= v_i(t_1^-) + v_{i+1}(t_1^-) \quad && \text{(conservation of momentum)},\label{EQUAT_Law_ColliCnsrvMomnt}\\
v_i(t_1^+) - v_{i+1}(t_1^+) &= -r\left( v_i(t_1^-) - v_{i+1}(t_1^-)\right) \quad && \text{(loss of a fixed fraction of the relative velocity)},\label{EQUAT_Law_ColliNormalComp}\\
v_k(t_1^+) &= v_k(t_1^-) \quad && \forall k \neq i,i+1.
\end{align}
where $r$ is a fixed real number, with $r \in\ ]0,1[$, and $v_i(t_1^-)$ and $v_i(t_1^+)$ denoting the velocity of the particle \circled{$i$}, respectively, right before and right after the collision.\\
The velocities $v_i(t_1^-)$ and $v_{i+1}(t_1^-)$ are called the \emph{pre-collisional velocities}, while the velocities $v_i(t_1^+)$ and $v_{i+1}(t_1^+)$ are called the \emph{post-collisional velocities}.\\
After such a collision, the particles are immediately separated once again, and we will assume that they will evolve again according to the free dynamics, until the time $t_2 > t_1$ of the next collision, and so on.\\
The fixed parameter $r$ is called the \emph{restitution coefficient}, and such a particle system is called the \emph{inelastic hard sphere system, with fixed restitution coefficient}.\\
\newline
Let us observe that the positions $x_i$ of the particles are time-continuous functions, while the velocities are piecewise constant functions, possibly discontinuous at the collision times $t_k$.\\
By convention, we define the velocities at the collision times $t_k$ to be equal to the post-collisional velocities, that is, we fix:
\begin{align}
\label{EQUATConventionPostC_Velo}
v_i(t_k) = v_i(t_k^+).
\end{align}

\begin{remark}
According to our assumptions, only adjacent particles can collide, and in order to have a first collision at time $t_1$ between the particles \circled{$i$} and \circled{$j$} with $j = i+1$ initially separated with $x_j(0) > x_i(0)$, we necessarily have $v_j(0) - v_i(0) < 0$. \eqref{EQUAT_Law_ColliNormalComp} implies that right after the collision time $t_1$ we have $v_j(t_1^+) - v_i(t_1^+) > 0$, so that the particles separate again, and we have $x_j(0) > x_i(0)$ for $t > t_1$ small enough.\\
Therefore, the particles cannot cross each other, and the order of the particles on the real line holds for all times. In other words, we have for all times:
\begin{align}
x_1(t) \leq x_2(t) \leq x_3(t) \leq x_4(t).
\end{align}
\end{remark}

\begin{remark}
Let us observe that the collision law \eqref{EQUAT_Law_ColliCnsrvMomnt}, \eqref{EQUAT_Law_ColliNormalComp} yields a well-posed dynamics only if each colliding particle collides with a single other particle. Such a collision is called a \emph{binary collision}. When a particle collides with two or more other particles, such a collision is called a \emph{multiple collision}.\\
However, this does not mean that only two particles are involved in a collision at a given time: in the present case, the pairs of particles \circled{1}-\circled{2} and \circled{3}-\circled{4} can perfectly collide at the same time.
\end{remark}

\begin{remark}
We supposed that the system is composed of point particles. Nevertheless, since the particles evolve on the real line, we could have considered particles \circled{$i$} with positive radii $r_i$, possibly different one from another. We would have then recovered the same dynamics, provided that the mass of all the particles remains the same. The only difference would have been in the definition of the relative positions. A collision at time $t$ would correspond instead to the condition $x_{i+1}(t) - x_i(t) - (r_i+r_{i+1}) = 0$.
\end{remark}

\noindent
Following, for instance, the approach of \cite{CoGM995} or \cite{CDKK999}, instead of considering the respective positions $x_i(t)$ and velocities $v_i(t)$ of the four particles, we will describe the system with the help of the relative distances and velocities, for adjacent particles. Namely, we introduce:
\begin{align}
p_i(t) &= x_{i+1}(t) - x_i(t),\hspace{2mm} \forall\, 1 \leq i \leq 3,\\
q_i(t) &= v_{i+1}(t) - v_i(t),\hspace{2mm} \forall\, 1 \leq i \leq 3,
\end{align}
and we define the \emph{position vector} $p(t) \in (\mathbb{R}_+)^3$ and the \emph{velocity vector} $q(t) \in \mathbb{R}^3$ as:
\begin{align}
p(t) = \left(p_1(t),p_2(t),p_3(t)\right) \hspace{3mm} \text{and} \hspace{3mm} q(t) = \left(q_1(t),q_2(t),q_3(t)\right).
\end{align}
Between two collision times $t_k$ and $t_{k+1}$ the evolution laws \eqref{EQUATFree_TrnspVariable_v}-\eqref{EQUATFree_TrnspVariable_x} write:
\begin{align}
	q(t)&=q(t_k) && \text{ for }t_k\leq t<t_{k+1}, \label{EQUATFree_TrnspVariable_q}\\ 
	p(t)&=p(t_k)+(t-t_k)q(t_k) \quad && \text{ for }t_k\leq t\leq t_{k+1} \label{EQUATFree_TrnspVariable_p}.
\end{align}
and when a collision takes place (that is, $p_i(t_{k+1})=0$ for some $i \in \{1,2,3\}$), the collision law \eqref{EQUAT_Law_ColliCnsrvMomnt}-\eqref{EQUAT_Law_ColliNormalComp} can be rewritten in the following matricial form:
\begin{align}\label{eq: update q vector}
	q(t_{k+1}^+)=K q(t_{k+1}^-),
\end{align}
where the matrix $K\in \R^{3\times 3}$ is equal to one of the three following \emph{collision matrices}:
\begin{align*}
	\begin{split}
		A&= \begin{pmatrix}
			-r & 0 & 0 \\
			(1+r)/2 & 1 & 0 \\
			0& 0 & 1 \\
		\end{pmatrix}\qquad \text{if $p_1(t_k) = 0$ (collision of type \circled{1}-\circled{2}, or type } \mathfrak{a}),\\
		B &= \begin{pmatrix}
			1 & (1+r)/2 & 0 \\
			0 & -r & 0 \\
			0 & (1+r)/2 & 1 \\
		\end{pmatrix} \qquad\text{if $p_2(t_k) = 0$ (collision of type \circled{2}-\circled{3}, or type } \mathfrak{b}),\\
		C &= \begin{pmatrix}
			1 & 0 & 0 \\
			0 & 1 & (1+r)/2 \\
			0 & 0 & -r \\
		\end{pmatrix} \qquad\text{if $p_3(t_k) = 0$ (collision of type \circled{3}-\circled{4}, or type } \mathfrak{c}).
	\end{split}
	\end{align*}
\noindent
Although the dynamical system $t \mapsto (p(t),q(t))$ clearly depends on the continuous variable $t$, it will be more convenient to consider its \emph{time discretization}, namely:
\begin{align}
\mathbb{N} \ni k \mapsto \left(p(t_k),q(t_k)\right).
\end{align}
In order to simplify the notations, when no ambiguity is possible, we will respectively denote $p(t_k)$ and $q(t_k)$ by $p(k)$ and $q(k)$.

\begin{remark}
The fact that the collisions \circled{1}-\circled{2} and \circled{3}-\circled{4} can take place simultaneously is consistent with the equality $AC = CA$: considering only the velocity variables, the order for which we apply $A$ and $C$ does not matter.
\end{remark}
\noindent
We mentioned that the dynamics is well-posed only when binary collisions take place. Therefore, the dynamics of the particle system is defined recursively for an arbitrary large number of collisions, provided that no multiple collision occurs.\\
Nevertheless, there is no certainty that the dynamics is well-posed on any time interval $[0,T]$, due to the phenomenon of \emph{inelastic collapse}. By definition, such a collapse happens when there exists a time $T_{\infty}\in\R$ such that infinitely many collisions occur on the time interval $[0,T_{\infty}[$. In such a case, the dynamics of the particle system might be ill-posed after $T_\infty$.\\
\newline
Moreover, we will also be interested in the order of collisions that occur. We say that an initial datum $(p(0), q(0))$ \emph{generates} a \emph{collision sequence}, or a \emph{collision pattern} (finite or infinite) $(c_1,c_2,\dots)$ with $c_i \in \{\mathfrak{a,b,c}\}\ \forall i \in \{1,2,3\}$ if these collisions occur in this order, where we allow interchanging $\mathfrak{a}$ and $\mathfrak{c}$ if these two collisions occur at the same time. If the sequence of collisions is eventually periodic (that is, $c_{i+L}=c_i$ for some $L>0$ and for all $i \geq i_0$, for some $i_0$), we call the finite collision sequence $c_{i_0}c_{i_0+1}\dots c_{i_0+L-1}$ the \emph{period} of the collision pattern.



\section{The $\mathfrak{ababcb}$ pattern}
\noindent
In the present section, we will study the pattern $\mathfrak{ababcb}$. What is different about this pattern compared to the ones studied in \cite{CDKK999} is that it is not symmetric (there are more collisions of type $\mathfrak{a}$ than of type $\mathfrak{c}$). As we will see, this pattern is still a possible way to lead to collapse, though the explicit initial datum we compute is unstable, so it will not be observable in numerical simulations.\\ \newline
To find an explicit initial datum that generates this pattern, we will follow the approach of \cite{CDKK999}. We note that since the pattern is periodic, the relative velocities after one iteration of these six collisions will have been multiplied by the matrix $M=BCBABA$:
\begin{align*}
    M=\begin{pmatrix}
(r - 1)^3(r^3 - 3r^2 - 5r - 9)/64 & (r - 1)^2(r^3 - r^2 + 7r + 9))/32 &    (r + 1)^2/4 \\
-r(r^5 - 7r^4 + 10r^3 + 18r^2 + 5r + 5)/32 & -r(r^4 - 4r^3 + 10r^2 + 4r + 5)/16 & -r(r+1)/2\\
(r + 1)^2(r^4 - 12r^3 + 42r^2 - 20r + 5)/64 &        (r - 1)^3(r^2 - 4r - 5)/32 &   (r - 1)^2/4
\end{pmatrix}.
\end{align*}
Since we want to study inelastic collapse, this matrix will be applied to the initial velocities $q(0)$ repeatedly. As is well-known, a repeated application of a matrix to a fixed initial vector will for almost all choices of such an initial vector will provide another vector, such that its direction converge towards the direction of the eigenvector associated to the dominant (largest in absolute value) eigenvalue, assuming the dominant eigenvalue is isolated.\\
Therefore, it makes sense to look for relative velocities that are eigenvectors of $M$, so that after all the six collisions, the resulting velocities $q(t_6)$ at the time $t_6$ of the $6$-th collision have only been rescaled.\\
We will also look for relative distances that have been scaled down by some possibly another scalar after the six collisions:
\begin{align*}
    q(t_6)=\la q(0),\quad p(t_6)=\mu p(0).
\end{align*}
We call these kind of initial data \textit{self-similar}. In order to find distances that satisfy this condition, we will look for a fixed point of a certain function $f$. To be more precise, we can assume without loss of generality that (note that the last collision is $\mathfrak{b}$, so particles \circled{2} and \circled{3} are in contact)
\begin{align*}
    p(0)=\begin{pmatrix}
        1\\0\\x
    \end{pmatrix}.
\end{align*}
Then, after the six collisions $\mathfrak{a,b,a,b,c,b}$, the positions write
\begin{align*}
    p(t_6)=p_1(t_6)\begin{pmatrix}
        1\\
        0\\
        f(x)
    \end{pmatrix},
\end{align*}
where $f$ has the form
\begin{align*}
    f(x)=\frac{p_3(t_6)}{p_1(t_6)}\cdotp
\end{align*}
Finding a fixed point for $f$ will then yield our candidate for $p(0)$.
\begin{theorem}
\label{THEORPatteAsyme}
    The pattern $\mathfrak{ababcb}$ has self-similar initial data if and only if $r<5-2\sqrt{6}\simeq 0.10102$. In that case, these self-similar initial data can be explicitly written as
    \begin{align*}
        q(0)=\begin{pmatrix}
            -1\\
            u\\
            v
        \end{pmatrix},\quad
        p(0)=\begin{pmatrix}
            1\\
            0\\
            x
        \end{pmatrix},
    \end{align*}
    where
    \begin{equation}\label{eq:expressions for uvx}
            \begin{aligned}
        u&=\frac{r(16\la - r^4+ 8r^3 + 2r^2+ 8r- 1)}{2(r + 1)(- r^3 + 6r^2 - r + 4\la)},\\
v&=-\frac{\la(64\la - r^6 + 10r^5 - 23r^4 + 44r^3 +r^2 + 42r - 9) + r^2(20r^4 - 32r^3 + 8r^2 + 4)}{4(r + 1)^2(- r^3 + 6r^2 - r + 4\la)},\\
        x &= \frac{a-d+\sqrt{(a-d)^2+4bc}}{2c}\cdotp
    \end{aligned}
    \end{equation}
Here, $\la$ is either the largest or smallest (positive) real eigenvalue of $M$, and the variables $a,b,c,d$ are given by
\begin{equation}\label{eq:expressions for abcd}
    \begin{aligned}
            a=&-64 r^2 (r^4 - 8 r^3 - 2 r^2 - 8 r + 1)\la+1024r^6,\\
    b=&256r^2(r^2-6r+1)\la^2 - 4r^2(r-1)^2(r-3)^2(r^4 - 8r^3 - 2r^2 - 8 r + 1)\la \\&+ 64r^6(r^4 - 8 r^3 + 18 r^2 + 5),\\
    c=&16(r - 1)^2 (r + 1)^2 (r^2 - 10 r + 1)\la,\\
    d=&-64 (r^4 - 8 r^3 - 2 r^2 - 8 r + 1)\la^2\\
    &+(r^{10} - 18 r^9 + 101 r^8 - 216 r^7 + 66 r^6 + 372 r^5 + 594 r^4 - 24 r^3 + 253 r^2 - 114 r + 9)\la.
    \end{aligned}
\end{equation}
\noindent
However, within the interval $]0,5-2\sqrt{6}[$, these self-similar initial conditions are not stable.
\end{theorem}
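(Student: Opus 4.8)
The plan is to carry out the calculation suggested by the setup of the section, and then verify the claimed equivalence and stability statement by an eigenvalue/eigenvector analysis of the collision matrices.

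\medskip

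\textbf{Step 1: Reduce the existence of self-similar data to a fixed point of $f$.}
Following the recipe already described, I would first compute $M = BCBABA$ explicitly (this is the matrix displayed before the theorem) and determine its characteristic polynomial. Since $\det A = \det B = \det C = -r$ and each period consists of $6$ collisions, we have $\det M = r^6$, and momentum conservation implies that $M$ fixes a one-dimensional "center of mass" type direction, so one eigenvalue can be read off directly; factoring it out leaves a quadratic (or cubic) whose roots $\lambda$ are the candidate rescaling factors for $q(0)$. For each such real eigenvalue $\lambda$, the eigenvector $q(0)$ is determined (up to normalization, which we fix by setting its first entry to $-1$, consistent with an $\mathfrak a$-collision starting the period); solving the linear system $(M-\lambda I)q(0)=0$ yields exactly the formulas for $u$ and $v$ in \eqref{eq:expressions for uvx}. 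Then, with $p(0) = (1,0,x)^{T}$ and this fixed $q(0)$, I would propagate the pair $(p,q)$ through the six collisions $\mathfrak{a,b,a,b,c,b}$ using \eqref{EQUATFree_TrnspVariable_q}--\eqref{EQUATFree_TrnspVariable_p} and \eqref{eq: update q vector}: between consecutive collisions the "time to the next collision" is determined by requiring the appropriate coordinate of $p$ to vanish, which makes $p(t_6)$ a rational function of $x$ (with coefficients rational in $r$ and $\lambda$). Writing $f(x) = p_3(t_6)/p_1(t_6)$, one finds $f$ is a Möbius (homographic) function $f(x) = (ax+b)/(cx+d)$ with $a,b,c,d$ as in \eqref{eq:expressions for abcd}; the self-similarity condition $p(t_6) = \mu\, p(0)$ is exactly $f(x) = x$, i.e. $cx^{2} + (d-a)x - b = 0$, whose relevant root is the displayed $x = \big(a - d + \sqrt{(a-d)^2 + 4bc}\big)/(2c)$.

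\medskip

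\textbf{Step 2: Determine when a valid self-similar datum exists.}
A genuine (physically admissible) self-similar collapsing trajectory requires three things: (i) the discriminant $(a-d)^2 + 4bc \geq 0$ so that $x$ is real; (ii) the positivity/sign constraints that make $(p(0),q(0))$ an actual collision-generating configuration — namely $p(0) \in (\mathbb{R}_+)^3$ (so $x > 0$), the relative velocities have the right signs to trigger the prescribed collisions in the prescribed order, and all the intermediate "time-to-collision" quantities computed in Step 1 are strictly positive; and (iii) the rescaling must be contractive in time, $0 < \mu < 1$ (and $\lambda$ must be the appropriate real eigenvalue — "largest or smallest positive real" — for the pattern to actually be the one realized). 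I would analyze these inequalities as functions of $r \in\ ]0,1[$. The claim is that the conjunction of all of them is equivalent to $r < 5 - 2\sqrt6$; I expect the binding constraint to be either the reality of the eigenvalues $\lambda$ of $M$ (the quadratic factor of the characteristic polynomial having nonnegative discriminant) or the positivity of $x$ / of an intermediate collision time, and that $5 - 2\sqrt 6$ appears as the root of the corresponding discriminant or numerator. Concretely I would compute that critical polynomial in $r$, factor it, and identify $5 - 2\sqrt6$ as its relevant root, checking the sign on each side. This case-by-case inequality bookkeeping across the six sub-steps is where the real work lies.

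\medskip

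\textbf{Step 3: Instability.}
For the last assertion, I would linearize the return map of the full (position-and-velocity) dynamics around the self-similar orbit. Because the dynamics is positively homogeneous, stability of the self-similar solution is governed by the eigenvalues of $M$ (rescaled by $\lambda$) together with the derivative $f'(x)$ at the fixed point: the self-similar pattern is stable only if $M/\lambda$ has all other eigenvalues of modulus $\le 1$ and $|f'(x)| \le 1$ (the standard criterion, spelled out in \cite{CDKK999} and \cite{HuRo023}). I would show that on $]0, 5 - 2\sqrt6[$ at least one of these conditions fails — most plausibly that $M$ has a second real eigenvalue $\lambda'$ with $|\lambda'| > |\lambda|$ whenever we are forced (by admissibility in Step 2) to take $\lambda$ to be the \emph{smaller} positive eigenvalue, or that $|f'(x)| = |(ad-bc)/(cx+d)^2| > 1$ there. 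Since $f$ is homographic, $f'(x) = (ad - bc)/(cx+d)^2$ and $f'(x) = \mu^{2}$-type expressions simplify; I would evaluate this at the fixed point and show its absolute value exceeds $1$ throughout the interval, which gives the instability. The main obstacle throughout is keeping the symbolic computation of the six-collision propagation (and of the eigenstructure of $M$) under control — this is heavy but mechanical, and I would organize it so that the key quantities ($\lambda$, $x$, $\mu$, $f'(x)$, and the subdominant eigenvalue of $M$) are each expressed as explicit algebraic functions of $r$ before extracting the inequalities.
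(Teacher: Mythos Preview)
Your overall strategy is exactly the paper's: compute the eigenvector of $M=BCBABA$, propagate positions to get the M\"obius map $f$, analyze when $f$ has a positive fixed point and the sign constraints are met, and then check stability via the derivative of $f$ at the fixed point together with the eigenvalue structure of $M$. Two concrete points where your plan goes wrong or is incomplete, however.

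First, your Step 1 contains a factual error. Momentum conservation does \emph{not} give you a free eigenvalue of $M$ in the relative-velocity coordinates $q=(q_1,q_2,q_3)$: passing from $(v_1,\dots,v_4)$ to differences has already quotiented out the momentum direction, and the characteristic polynomial of $M$ is an honest cubic in $\lambda$ with no obvious factor. The paper analyzes its discriminant (a degree-$10$ polynomial factor in $r$) to find that there are three real eigenvalues only for $r$ below a root $r_*\simeq 0.11384$, and then works with the three real branches separately. So you should drop the ``factor out and get a quadratic'' expectation and plan instead to study the cubic's discriminant in $r$.

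Second, and more importantly, your Step 2 is underspecified in a way that hides the main difficulty. The quantities $a,b,c,d$, the intermediate velocities $q_j(t_i)$, and the intermediate positions $p_j(t_i)$ are all polynomials in $(r,\lambda)$, but $\lambda$ is itself an algebraic function of $r$ defined only implicitly by $\operatorname{cp}(M)(\lambda)=0$. To determine the sign of, say, $a-d$ or $b$ \emph{along each eigenvalue branch}, the paper eliminates $\lambda$ by computing the common zeros of each expression with $\operatorname{cp}(M)$ (via a Gr\"obner basis or resultant), obtaining a univariate polynomial in $r$ whose roots are the only places the sign can change on the spectrum; Sturm's theorem then locates those roots. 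This elimination step is what makes the ``case-by-case inequality bookkeeping'' tractable, and it is how the critical value $5-2\sqrt{6}$ emerges (as the root of $r^2-10r+1$, which is the factor governing the sign of $c$). Without this tool your plan has no mechanism for actually deciding the signs. The same elimination technique is used in Step 3 to locate the zeros of $|q|-1$ (equivalently of $|f'(x_+)|-1$) on the upper branch and conclude $|q|>1$ there; on the lower branch instability follows, as you correctly guess, because $\lambda$ is not dominant.
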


\begin{remark}
    The critical threshold of $r$, namely $r_{crit,\mathfrak{ababcb}}=5-2\sqrt{6}$, happens to be also the critical threshold for a different periodic pattern, namely $(\mathfrak{ab})^3(\mathfrak{cb})^3$ (see \cite[Table 1]{CDKK999}).
\end{remark}
\begin{proof}
We proceed in the following steps.
\begin{enumerate}
\item We study the spectrum of $M$. We find that there always exists a positive eigenvalue, on the whole interval $]0,1[$, and, up to $r\simeq 0.11384$, there exist two additional real eigenvalues, which are smaller in absolute value.
    \item We check if there exist positive fixed points for the function $f$ characterizing the relative distances after all $6$ collisions. Below $r=5-2\sqrt{6}\simeq 0.10102$ there exists such a fixed point only for the largest and smallest eigenvalues. Above this threshold, none of the fixed points are positive anymore, hence there are no self-similar initial data.
    \item We check whether the candidate initial data indeed generate the pattern $\mathfrak{ababcb}$ by checking the sign of the velocities and the positivity of the distances.
    \item Within the region where positive fixed points for $f$ exist, we prove instability in the position variable for the upper branch (by considering the modulus of $f'$), and in the velocities on the lower branch (by noting it is not the dominant eigenvalue).
\end{enumerate}
\textbf{Step 1: Studying the spectrum of $M$.}\newline
The characteristic polynomial of $M$ is
\begin{align*}
    \operatorname{cp}(M)&=\la^3-\tfrac{\la^2}{64}(r^6 - 10 r^5 + 23 r^4 - 44 r^3 + 15 r^2 - 74 r + 25)\\
    &+\tfrac{\la}{64} (25 r^6 - 74 r^5 + 15 r^4 - 44 r^3 + 23 r^2 - 10 r + 1)-r^6.
\end{align*}
Since the determinant of $M$ is $r^6$, which is positive, there exists always one positive real eigenvalue: if all eigenvalues are real, then in order for their product (the determinant) to be positive, at least one of the eigenvalues must be positive. In the other case, if there are (necessarily) two non-real eigenvalues that are complex conjugates, the product of two complex conjugates is nonnegative, so that the remaining real eigenvalue must again be positive. To determine when we have three real eigenvalues, we consider the determinant of the characteristic polynomial, which is 
\begin{align*}
    \Delta_{cp}=-\frac{1}{1811939328}(r - 1)^4(r + 1)^6(3r^2 - 14r + 3)^2P_0(r),
\end{align*}
where
\begin{align*}
    P_0(r)=&41r^{10} - 646r^9 + 3733r^8 - 14600r^7 + 42306r^6 - 78052r^5 \\
    &+ 42306r^4 - 14600r^3 + 3733r^2 - 646r + 41.
\end{align*}
We have three real roots of the characteristic polynomial (that is to say, three real eigenvalues) if $\Delta_{cp}<0$, if $\Delta_{cp}=0$ one root has multiplicity at least equal to 2, and if $\Delta_{cp}>0$, two of the roots are not real. Since all factors of $\Delta_{cp}$ except the tenth-degree factor $P_0$ appear with an even power, the sign of $\Delta_{cp}$ can only change when $P_0$ changes sign. Note here that while the factor $3r^2 - 14r + 3$ has one root at $r=(7-2\sqrt{10})/3\simeq 0.22515$ and so two eigenvalues are the same for this particular value of $r$ (and hence there may be a new real positive eigenvalue), one can check that this repeated eigenvalue is negative.\\
We can examine the roots of $P_0$ in $]0,1[$ using for example Sturm's theorem (see \cite[Section 1.4.2]{Pra010}), and it turns out that there is only one root $r_*$ in this interval, and we have $r_*\simeq 0.11384$. Additionally, one can check the signs of $P_0$ around this value to determine that $\Delta_{cp}<0$ for $r<r_*$ and $\Delta_{cp}\geq 0$ for $r>r_*$ (with $\Delta_{cp}>0$ for $r\neq (7-2\sqrt{10})/3$). Therefore there are three real eigenvalues only below $r_*$. The spectrum in the $r-\la$-plane is depicted in Figures \ref{fig:spectrum} and \ref{fig:spectrum zoom}, the latter of which is a zoomed in version of the former. We will divide the spectrum in three parts: the real eigenvalue that always exists (upper branch), the larger of the two eigenvalues that cease to be real around $r_*\simeq 0.11384$ (middle branch) and the smaller one (lower branch).
\begin{figure}[h]
\centering
\begin{minipage}{.5\textwidth}
  \centering
  \includegraphics[width=\linewidth]{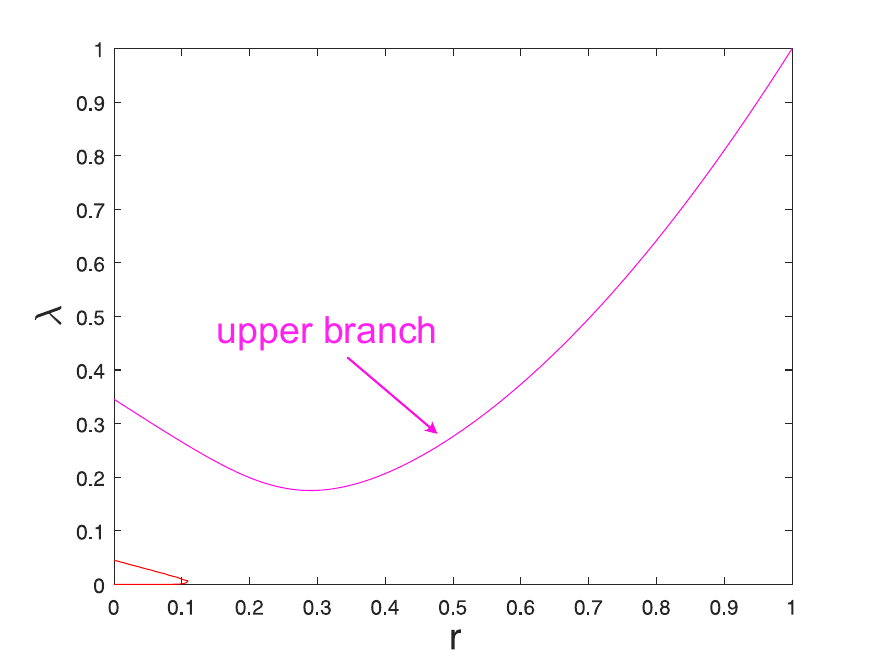}
  \captionof{figure}{The spectrum of $M$.}
  \label{fig:spectrum}
\end{minipage}%
\begin{minipage}{.5\textwidth}
  \centering
  \includegraphics[width=\linewidth]{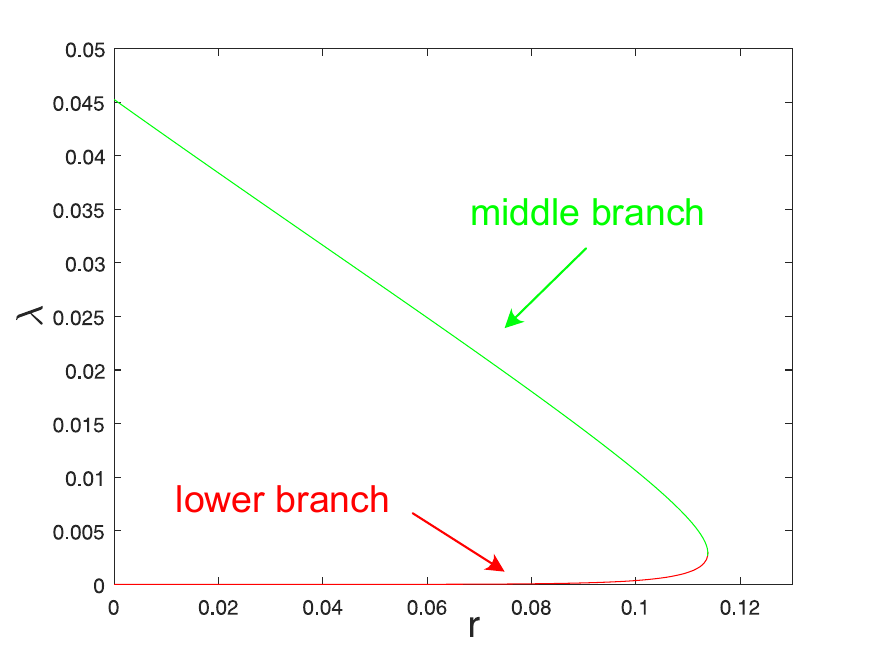}
  \captionof{figure}{The spectrum of $M$ for $0<r<0.13$, and $0 < \lambda < 0.05$.}
  \label{fig:spectrum zoom}
\end{minipage}
\end{figure}
\newline Additionally, we can obtain the expressions for $u$ and $v$ given in \eqref{eq:expressions for uvx} by solving the eigenvalue equation
\begin{align*}
    Mq(0)=\la q(0),
\end{align*}
without explicitly solving for the eigenvalue $\la$. Considering only the first and second components of this eigenvalue equations and solving for $u$ and $v$ shows that they can indeed be expressed as in \eqref{eq:expressions for uvx}.
\\ \newline
\textbf{Step 2: Studying when $f$ has a positive fixed point.}\newline
Writing down the evolution of the distances $p$ (assuming we already know that the collisions will be $\mathfrak{a,b,a,b,c,b}$ in this order) using the explicit expressions for $u$ and $v$ given in \eqref{eq:expressions for uvx}, we find after one period of the pattern has occurred that:
\begin{align*}
    p(t_6)=p_1(t_6)\begin{pmatrix}
        1\\
        0\\
        f(x)
    \end{pmatrix},\text{ where }
    f(x)=\frac{p_3(t_6)}{p_1(t_6)}=\frac{ax+b}{cx+d},
\end{align*}
with $a,b,c,d$ as given in \eqref{eq:expressions for abcd}. Note that these expressions involve an eigenvalue $\la$, which of course depends on $r$ too. \newline Since we are looking for self-similar initial data, we want to study when $f$ has a fixed point, that is, $f(x)=x$. Rearranging, we see that any fixed point of $f$ must be given by
\begin{align*}
     x_\pm = \frac{a-d\pm\sqrt{(a-d)^2+4bc}}{2c}\cdotp
\end{align*}
We now determine the signs of the different quantities in this expression in order to find the sign of $x_\pm$.\\ \newline
\textbf{Step 2.1: Studying the sign of $a-d$.}\newline
In order to determine the sign of
\begin{align*}
    a-d &= \la^2(64r^4 - 512r^3 - 128r^2 - 512r + 64) \\&\hspace{5mm}+ \la(r - 1)^2(- r^8 + 16r^7 - 68r^6 + 64r^5 + 66r^4 + 208r^3 - 116r^2 + 96r - 9)+1024r^6
\end{align*}
on the spectrum of $M$, we consider the common roots of $a-d$ and $\operatorname{cp}(M)$. We can for example compute the reduced Gröbner basis (with respect to the lexicographic ordering $\la>r$; see \cite[Section 6.2]{Pra010} for more details) of the ideal generated by these two polynomials in $r$ and $\la$ to study their intersection points. We find that when $a-d=0$ and $\operatorname{cp}(M)=0$, it must hold that $r$ is a root of
\begin{align}\label{eq: ABABCB a-d necessary roots}
    r^6(r-1)^4(r+1)^4(r^2-10r+1)P_1(r),
\end{align}
where $P_1(r)$ is the following eleventh-degree polynomial:
\begin{align*}
    P_1(r)= 17r^{11} - 435r^{10} + 4363r^9 - 22753r^8 &+ 61530r^7 - 65806r^6 - 27194r^5 \\
    &+ 24494r^4 - 9499r^3 + 3025r^2 - 545r + 35.
\end{align*}
The expression \eqref{eq: ABABCB a-d necessary roots} hence has roots in $]0,1[$ at $r_1:=5-2\sqrt{6}\simeq 0.10102$ and at roots of the eleventh degree polynomial $P_1$. It can be seen that the polynomial $P_1(r)$ (using for example Sturm's theorem again) has three real roots $r_{\leq 0} < r_2 < r_{\geq 1}$, such that only $r_2$ lies in $]0,1[$, and we have $r_2\simeq 0.13145$.
We observe that $a-d$ is a second degree polynomial in $\lambda$, so for every fixed value of $r$ it can have at most two roots, that is, the sign of $a-d$ can change at most on two branches of the eigenvalues. One can check that for $r_1$, this happens on the middle branch, and for $r_2$ the sign also changes (necessarily on the upper branch). Evaluating $a-d$ explicitly for a few values of $r$, we find that the sign of $a-d$ is as illustrated in Figure \ref{fig: sign changes a-d}.
\begin{figure}[h]
\centering
\begin{minipage}{.5\textwidth}
  \centering
    \includegraphics[width=.9\linewidth]{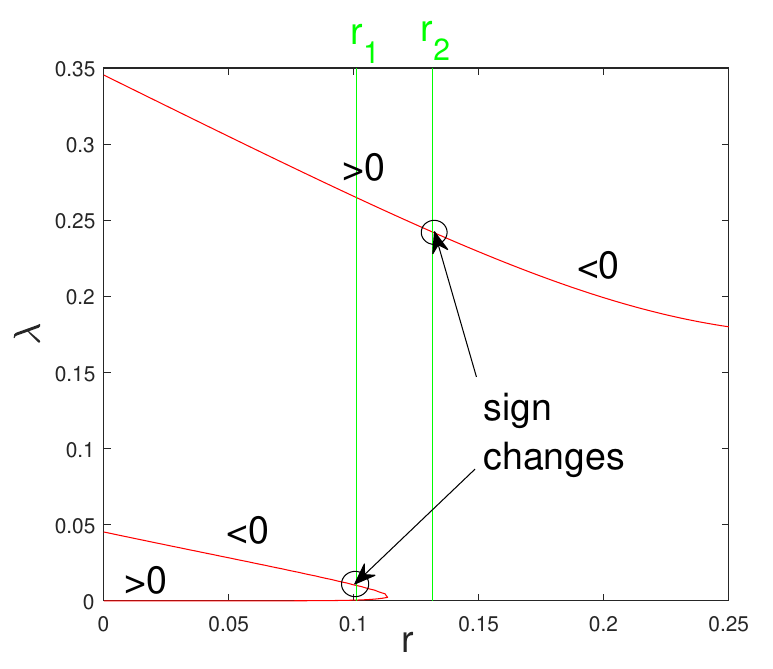}
    \caption{Sign changes of $a-d$.}
    \label{fig: sign changes a-d}
\end{minipage}%
\begin{minipage}{.5\textwidth}
  \centering
    \includegraphics[width=\linewidth]{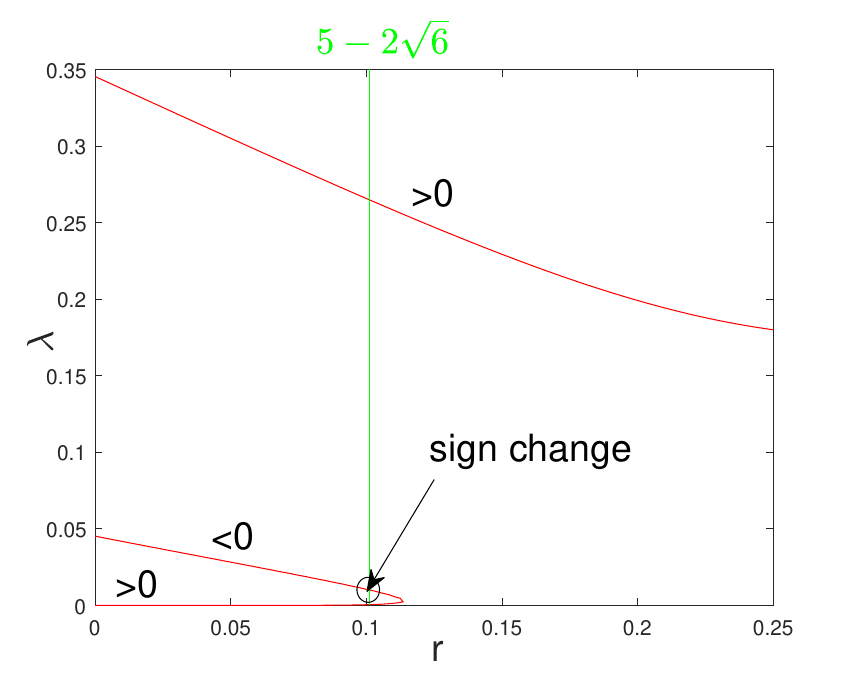}
    \caption{Sign changes of $b$.}
    \label{fig: sign change b}
\end{minipage}
\end{figure}
\\ \newline\textbf{Step 2.2: Studying the sign of $b$.}\newline
The next variable to be examined is
\begin{align*}
    b&=256r^2(r^2-6r+1)\la^2 - 4r^2(r-1)^2(r-3)^2(r^4 - 8r^3 - 2r^2 - 8 r + 1)\la \\&\hspace{5mm}+ 64r^6(r^4 - 8 r^3 + 18 r^2 + 5).
\end{align*}
We proceed as before, showing that for the pairs of $r$ and $\la$ in $\{\operatorname{cp}(M) =0,b=0\}$, it holds that
\begin{align*}
    r^6(r-1)^4(r+1)^4(r^2 - 10r + 1)(5r^4 - 40r^3 + 102r^2 - 40r + 5)=0.
\end{align*}
We can check that for $5-2\sqrt{6}$ (the root of $r^2-10r+1$ on $]0,1[$), the sign of $b$ changes on the lower branches, whereas the fourth-degree factor does not have any real root. By again inserting a few explicit values of $r$, we find that the sign of $b$ is given as shown in Figure \ref{fig: sign change b}.\\ \newline
\textbf{Step 2.3: Studying the sign of $c$.}\newline
Finally, the sign of
\begin{align*}
    c=16(r - 1)^2 (r + 1)^2 (r^2 - 10 r + 1)\la
\end{align*}
is easy to obtain. Since $\la$ has to be positive, the sign of $c$ is the same as the one of $r^2-10r+1$. Hence, we have that $c>0$ for $r<5-2\sqrt{6}$ and $c>0$ for $r>5-2\sqrt{6}$.
\\ \newline
\textbf{Step 2.4: Deducing the sign of the fixed points of $f$.}\newline
After studying the signs of $a-d$, $b$ and $c$ above, we can now determine the sign of the fixed points.\newline For the upper branch, we know that $a-d$, $b$ and $c$ are positive up to $r=5-2\sqrt{6}$, so there exists exactly one positive fixed point, namely $x_+$. At this point however, $c$ changes sign, so between $r=5-2\sqrt{6}$ and $r_2\simeq 0.13145$, the fixed points are not positive (if they are even real). At $r_2$ at latest, the fixed points are not real anymore since $a-d=0$ there and $4bc<0$. One can also check that $(a-d)^2+4bc$ remains negative until $r=1$, so that a positive fixed point on the upper branch can only exist up to $r=5-2\sqrt{6}$.\\
On the middle branch there are no positive fixed points: for $r<5-2\sqrt{6}$, both $a-d$ and $bc$ are negative, whereas $c$ is positive. Then, for $r>5-2\sqrt{6}$, $a-d$ becomes positive while $bc$ stays negative, but since also $c$ changes sign to be negative, the fixed points remain negative.\\
On the lower branch, by the same reasoning as for the upper branch, there is exactly one positive fixed point until $r=5-2\sqrt{6}$, and it is $x_+$.\newline
In conclusion, we observed that there exists one (and only one) positive fixed point on the upper and lower branch below $r=5-2\sqrt{6}$, so that there are two candidates for the initial variable $x$ (again below $r=5-2\sqrt{6}$).
\\
\newline
\textbf{Step 3.1: The velocity inequalities.}
\newline
Our next task is to determine whether the initial datum that we found (the eigenvector of $M$) generates velocities that have the correct sign. That is, we need to check whether after each application of the matrices $A$, $B$ or $C$, the particles that are supposed to collide next are moving towards each other. Note that for this step, we do not yet know if the collisions are the correct ones, but we simply \textit{define} $q(t_1)=Aq(0)$, then $q(t_2) = BAq(0) = Bq(t_1)$, and so on.\newline
Moreover we need to have that after each collision, the particles that just collided are moving away from each other - this will be automatically the case for most of the collisions (if the velocities before are negative, they will be positive afterwards for $r>0$), but we need to check this in particular for $t=0$.\newline
First, we check whether $u$ is positive, so that the initial datum is one that can be attained after a collision of type $\mathfrak{b}$ that just occurred. We note that $u=0$ if and only if $16\la = r^4- 8r^3 - 2r^2- 8r+ 1$, which we can insert in the equation of the characteristic polynomial, so that $u=0$ implies that
\begin{align*}
    (r^2 - 10r + 1)(r^5 - 15r^4 + 66r^3 - 86r^2 - 35r + 5)=0.
\end{align*}
The first factor only has one root in $]0,1[$, which is the familiar value of $5-2\sqrt{6}$. The second factor has exactly one root in $]0,1[$, but it lies at around $r\simeq 0.11376$, which is larger than $5-2\sqrt{6}$. Hence, $u$ does not change its sign up to $5-2\sqrt{6}$, and one can check that it is in fact positive in that interval on the upper and lower branches (as we determined in Step 2.4, on the middle branch there is no positive fixed point for $f$ anyway).\\
In the same way, we can check the other constraints on the velocities, namely:
\begin{itemize} 
    \item $q_2(t_1)=u-\frac{1+r}{2}<0$, 
    \item $q_1(t_2)=\frac{1+r}{2}u-\frac{(r-1)^2}{4}<0$, 
    \item $q_2(t_3)=\frac{(r-1)^2}{4}u-\frac{r^3 -5r^2 -5r +1}{8}<0$, 
    \item $q_3(t_4)=\frac{r^3 - r^2+3r+ 5}{8}u+v-\frac{(r + 1)^2(r^2 - 6r + 5)}{16}<0$, 
    \item $q_2(t_5)=\frac{r^4 - 4 r^3 + 10 r^2 + 4 r + 5}{16}u+\frac{1+r}{2}v-\frac{(r + 1) (r^4 - 8 r^3 + 18 r^2 + 5)}{32}<0$, 
\end{itemize}
to find that all of them are fulfilled on the upper and lower branch - in any case, there is no sign change between $0$ and $r=5-2\sqrt{6}$.\\
\newline\textbf{Step 3.2: The distance inequalities.}
\newline
As a next step, we need to check that the candidate for the initial datum indeed generated the pattern $\mathfrak{ababcb}$. Again, we simply \textit{define} $p(t_1) = p(0)-\frac{p_1(0)}{q_1(0)}q(0)$ (collision of type $\mathfrak{a}$ at time $t_1 = p_1(0)/(-q_1(0))$), $p(t_2) = p(t_1) - \frac{p_2(t_1)}{q_2(t_1)}q(t_1)$ (collision of type $\mathfrak{b}$ at time $t_2 = t_1 + p_2(t_1)/(-q_2(t_1))$), and so on (as if we knew already that the collisions occur in the order we want), and check afterwards that the generated distance vectors have nonnegative (or positive) components. By the previous step, we will then need to check the following:
\begin{itemize}
    \item $p_3(t_i)>0$ for $i=0,\dots,4$,
    \item $p_1(t_5)>0$.
\end{itemize}
We have already examined the first distance inequality (for $t_0=0)$, namely that $p_3(0)=x>0$. Now we observe that since the applications of the matrices $A$ and $B$ can only decrease the third velocity component, $p_3(t_4)>0$ together with $x=p_3(t_0)>0$ implies also that $p_3(t_k)>0$ for $k=1,2,3$. Therefore, we only need to fulfill the following inequalities:
\begin{itemize}
    \item $p_3(t_4)>0$, where
    \begin{align*}
        p_3(t_4)&=x+\frac{(- 64r^2 + 384r - 64)\la^2}{(16r^4 - 128r^3 - 32r^2 - 128r + 16)\la - 256r^4}\\
        &\hspace{5mm} + \frac{(r^8 - 16r^7 + 84r^6 - 192r^5 + 222r^4 - 208r^3 + 196r^2 - 96r + 9)\la}{(16r^4 - 128r^3 - 32r^2 - 128r + 16)\la - 256r^4}\\
        &\hspace{5mm}+\frac{- 16r^8 + 128r^7 - 288r^6 - 80r^4}{(16r^4 - 128r^3 - 32r^2 - 128r + 16)\la - 256r^4}\cdotp
    \end{align*}
    \item $p_1(t_5)>0$, where
    \begin{align*}
        p_1(t_5)&=\frac{\la(r^2 - 1)^2(r^2 - 10r + 1)}{- 64\la^2+4( -r^5 + 9r^4 - 6r^3 + 6r^2 - 9r + 1)\la + 64r^5}x\\
        &\hspace{5mm}+\frac{(64r^4 - 512r^3 - 128r^2 - 512r + 64)\la^2}{1024\la^2 + (64r^5 - 576r^4 + 384r^3 - 384r^2 + 576r - 64)\la - 1024r^5}\\ &+\frac{ (- r^{10} + 18r^9 - 101r^8 + 216r^7 - 66r^6 + 652r^5 - 594r^4 + 24r^3 - 253r^2 + 114r - 9)\la}{1024\la^2 + (64r^5 - 576r^4 + 384r^3 - 384r^2 + 576r - 64)\la - 1024r^5}\\
        &\hspace{5mm}+\frac{- 64r^9 + 512r^8 + 128r^7 + 512r^6 - 64r^5}{1024\la^2 + (64r^5 - 576r^4 + 384r^3 - 384r^2 + 576r - 64)\la - 1024r^5}\cdotp
    \end{align*} 
\end{itemize}
Finally, we also know from Step 2 that the positive fixed point $x$ on the upper and lower branch has the form
\begin{align*}
    x=\frac{a-d+\sqrt{(a-d)^2+4bc}}{2c}\cdotp
\end{align*}
From this we can rearrange each of the equations $p_k(t_i)=0$ into a corresponding polynomial equation again, of which we can find the common roots with the characteristic polynomial as before. In any case, there is no sign change up to $r=5-2\sqrt{6}$.\\
\newline
\textbf{Step 4: Studying if the pattern is stable.} 
\newline
Finally, we examine whether the self-similar initial data that we found has an attracting neighbourhood around it. For the initial data corresponding to the lowest eigenvalue we know that this is not the case since already the velocities are unstable: we know that repeated applications of a matrix will for almost every initial vector converge towards the direction of the eigenvector associated to the dominant (the largest in absolute value) eigenvalue, that is, the eigenvalue on the upper branch. \newline
For the largest eigenvalue (since all eigenvalues up to $r=5-2\sqrt{6}$ are positive, this is also the dominant eigenvalue), we need to examine whether the fixed point $x_+$ for the dynamical system $x_{n+1}=f(x_n)$ is stable. Since $f$ is a Möbius transformation (or homography), this amounts to studying whether the modulus of
\begin{align*}
    q=\frac{cx_++d}{cx_-+d}=\frac{a+d+\sqrt{(a-d)^2+4bc}}{a+d-\sqrt{(a-d)^2+4bc}}
\end{align*}
is larger or smaller than $1$. Namely if $|q|<1$, the fixed point $x_+$ is stable, and if $|q|>1$, it is unstable. The case $q=1$ corresponds to $d^2+4bc=0$, which does not happen below $r=5-2\sqrt{6}$ by Step 2. The case $q=-1$ corresponds to the point where $a+d=0$, which can (by the methods used above) be shown to happen at a root of a $30$-th degree polynomial, namely when
\begin{align*}
    0=r^6(r-1)^6(r+1)^3(r^2-10r+1)P_2(r)P_3(r)
\end{align*}
where
\begin{align*}
    P_2(r)&=25r^8 - 224r^7 + 284r^6 - 416r^5 + 150r^4 - 416r^3 + 284r^2 - 224r + 25,\\
    P_3(r)&=r^5 - 15r^4 + 66r^3 - 86r^2 - 35r + 5.
\end{align*}
The eighth degree polynomial $P_2$ has one root around $r_3\simeq 0.12879$, and $P_3$ has one root around $r_4\simeq 0.11376$, however both lie above $r=5-2\sqrt{6}$.\\
In conclusion, the sign of $|q|-1$ does not change until $5-2\sqrt{6}$, and one can check that $|q|>1$ below this threshold on the upper branch of the eigenvalues, so that this fixed point is unstable. Again note that while $|q|<1$ on the lower branch, the velocity variables are unstable already for the lower branch of the eigenvalues, since the eigenvalue is not the dominant one.\\
\newline
\textbf{Conclusion.}
\newline
In summary, we have found the following: the pattern $\mathfrak{ababcb}$ has feasible self-similar initial data on the upper and lowest branch of the eigenvalues up to $r_{crit}=5-2\sqrt{6}$. On the middle branch there are no positive fixed points for the position map $f$. Above this critical value, there exists no real positive fixed point at all, on any of the branches.\\
Moreover, the pattern in unstable: for the upper branch, the instability comes from the position map $f$ (the fixed point is repelling), whereas for the lowest branch, it comes from the velocities, since they correspond to an eigenvalue of $M$ that is not the largest in absolute value.\\
The proof of Theorem \ref{THEORPatteAsyme} is complete.
\end{proof}
\begin{remark}
    Using the same methods, one can study also the pattern $\mathfrak{abcab}$ and finds that this pattern does not have any self-similar initial datum that generates this pattern. The reason is that for the initial data that are the obvious candidates - that is, the eigenvector of the corresponding matrix $BACBA$ for the velocities and a fixed point for the distance map $f$ for the positions - the collision after the first $\frakB$ collision is $\frakA$, \emph{not} $\frakC$ (and these collisions do not take place at the same time, so that the order really is $\frakA$ and then, after a strictly positive time, $\frakC$). \newline
    Note that this does not mean that the pattern $\mathfrak{abcab}$ is impossible, since the approach used above can only detect self-similar initial data. Interestingly though, for other patterns that we have tested, any pattern that contains this sequence of collisions ($\mathfrak{abcab}$) as a subsequence does not seem to have feasible initial data - which is surprising, since the eigenvectors of the respective matrices and the fixed points for the respective functions $f$ do not have a clear dependence on another. 
\end{remark}

\section{The spherical reduction}
\noindent
As it was already observed in \cite{BeCa999}, the system has clear scalings. It is possible, on the one hand, to rescale all the relative velocities with the same factor, providing the same dynamics, in the sense that the order of the collisions is the same, and even the ratios between consecutive collision times are preserved. The only difference is that all the particles travel their trajectories faster or slower according to this scaling. On the other hand, rescaling with the same factor all the relative positions provides also the same dynamics.
Therefore, we can assume that the vectors
\begin{align}
p(0) = \begin{pmatrix}
p_1(0) \\ p_2(0) \\ p_3(0)
\end{pmatrix}
\hspace{3mm}
\text{and}
\hspace{3mm}
q(0) = \begin{pmatrix}
q_1(0) \\ q_2(0) \\ q_3(0)
\end{pmatrix}
\end{align}
are both normalized. In addition, since we describe the states of the particle system from a collision to the following, one of the three components of $p(0)$ is zero.\\
Let us now implement, for the present case of a system of four particles, the reduction presented in \cite{CoGM995}. In this reference, the authors reduce a system of three one-dimensional inelastic particles to a two-dimensional billiard: the trajectories of the particles, written in terms of the consecutive positions, are piecewise affine functions, taking values in $\mathbb{R}^2$, inscribed in the first quadrant of the plane (this restriction coming from the fact that the relative distances remain positive).\\
\newline
In the case of four particles, we could consider the exact analog of the construction of \cite{CoGM995}, that is, considering a three-dimensional billiard in the first octant of $\mathbb{R}^3$. However, we will show that we can proceed to a two-dimensional reduction. Let us motivate this reduction with a computation.\\
Let us assume that a collision of type $\mathfrak{a}$ just happened, so that in particular the particles \circled{1} and \circled{2} are initially in contact. In that case, the vector $p(0)$ writes:
\begin{align}
p(0) = \begin{pmatrix}
0 \\
\cos\theta\\
\sin\theta,
\end{pmatrix}
\end{align}
for a certain angle $\theta \in\ ]0,\pi/2[$ (the case $\theta = 0$ corresponds to a simultaneous collision involving \circled{1}-\circled{2} and \circled{3}-\circled{4}, while the case $\theta = \pi/2$ corresponds to a triple collision). We can now decompose the vector of the initial relative velocities, along the tangent plane to the unit sphere at $(0,\cos\theta,\sin\theta)$, and its orthogonal. Let us observe that an orthonormal basis of this tangent plane is given by the two vectors $e_x = (1,0,0)$ and $e_\theta = (0,-\sin\theta,\cos\theta)$. In the limit case $\theta = 0$, the vector of the initial positions becomes $(0,1,0)$, so that $\{(1,0,0),(0,1,0),(0,0,1)\}$, corresponding to $\{e_x,p(0),e_\theta\}$, is a direct orthonormal basis of the space. The second limit case is $\theta = \pi/2$, providing $\{(1,0,0),(0,0,1),(0,-1,0)\}$, which is again a direct basis. Therefore, it is natural to measure the angles in the tangent plane, where the direct orientation is chosen from $e_\theta$ to $e_x$. Assuming that the vector $q(0)$ of the initial velocities lies in the tangent plane, it can be written as:
\begin{align}
\cos\varphi \cdot e_\theta + \sin \varphi \cdot e_x.
\end{align}
It is clear that the value of the angle $\varphi$ is prescribing which collision will follow from the initial configuration: if $\varphi \in\ ]0,\pi/2[$, the trajectory $p(t) = p(0) + t q(0)$ will intersect the plane $y=0$ before the plane $z=0$, which means that the next collision to take place involves the particles \circled{2} and \circled{3}. On the opposite, if $\varphi \in\ ]\pi/2,\pi[$, the trajectory $p(t)$ will intersect first the plane $z=0$, implying that the next collision involves the particles \circled{3} and \circled{4}.\\
Of course, in general the vector of the initial velocities has no reason to belong to the tangent plane to the sphere at $p(0)$. However, let us observe that \emph{only} this angle $\varphi$ is prescribing the next collision when the vector $p(0)$ of the initial relative positions is fixed, in the sense that the component of $q(0)$ that lies in the orthogonal to the tangent plane to the unit sphere plays no role in this matter.\\
\newline
We will show in this section a result that is much stronger, and which is somehow surprising. According to the description we followed, representing the initial positions as a unit vector in the plane $x=0$, and the initial velocities as another unit vector, we define in this way a plane, generated by these two vectors. After this initial configuration, when a first collision takes place, another pair of particles is now in contact, and we can represent the vector of the new positions as (assuming for instance that the next collision is of type $\mathfrak{b}$)
\begin{align}
p(1) = r\begin{pmatrix} \sin\theta_1 \\ 0 \\ \cos\theta_1 \end{pmatrix}.
\end{align}
In the same way, the vector of the velocities, right after this collision of type $\mathfrak{b}$ becomes $q(1) = B q(0)$. These two vectors $p(1)$ and $q(1)$ define a new plane, and we can repeat the procedure, for all the collisions taking place in the future.

\begin{figure}[h]
\centering
    \includegraphics[trim = 0cm 0cm 0cm 0cm, width=1\linewidth]{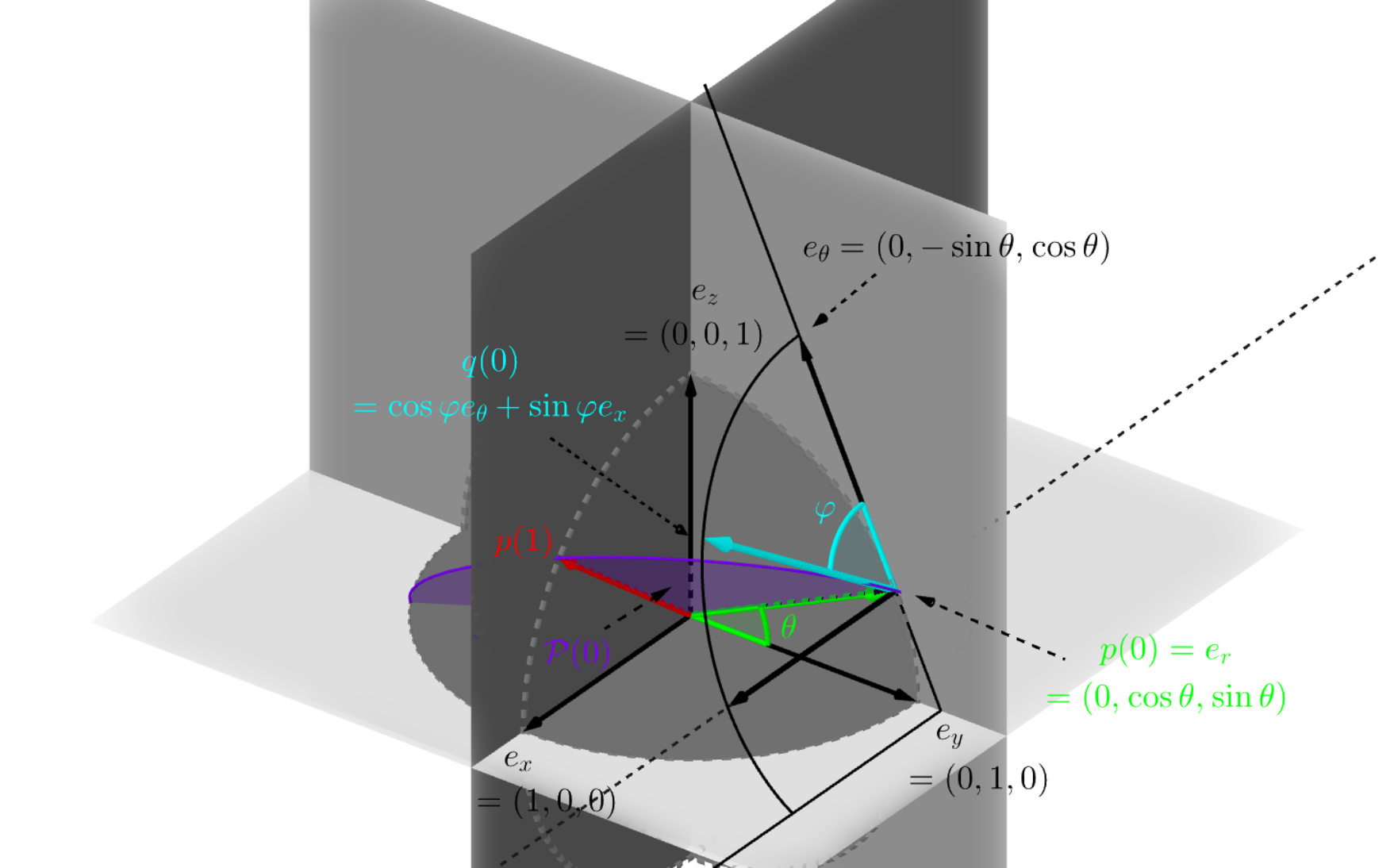}
    \caption{Representation in the first octant of $\mathbb{R}^3$ of a trajectory assuming that $p(0)$ and $q(0)$ are normalized, and that the initial configuration corresponds to a collision of type $\mathfrak{a}$. The two vectors $p(0)$ and $q(0)$ span the plane $\mathcal{P}(0)$ (in purple on the figure). The intersection between $\mathcal{P}(0)$ and $y=0$ (because $\varphi \in\ ]0,\pi/2[$ on the figure, so the next collision is of type $\mathfrak{b}$) defines the direction of $p(1)$. To determine $\mathcal{P}(1)$, one computes $Bq(0)$, then projects this vector on $p(1)^\perp$ and finally normalizes it to obtain $q(1)$, so that $\mathcal{P}(1) = \text{Span}\left(p(1),q(1)\right)$. One can then repeat the process forever to define the sequence of planes $\left(\mathcal{P}(k)\right)_k$.}
    \label{fig:IllusSpherReduc}
\end{figure}

\noindent
Therefore, it is always possible to construct from a sequence of collisions the associated sequence of planes. What we will show is that, for a given plane $\mathcal{P}(0)$ that contains $p(0)$ and $q(0)$, any other sequence of collisions starting from $p'(0)$ and $q'(0)$ such that $\text{Span}(p'(0),q'(0)) = \text{Span}(p(0),q(0)) = \mathcal{P}(0)$ is associated to the same sequence of planes as the sequence of collisions starting from $p(0)$ and $q(0)$. This result, which is one of the main theorems of the present article, is the object of the following section.\\
An illustration of the first step of the construction is depicted in Figure \ref{fig:IllusSpherReduc}.\footnote{The reader may also find an interactive $3$d version of Figure \ref{fig:IllusSpherReduc} following the link: https://www.geogebra.org/m/cpddxb6a}

\subsection{The spherical reduction theorem}

\begin{theorem}[Spherical billiard reduction]
\label{THEORSpherical_Reduc}
Let $0 < r < 1$ be a positive real number. Let us consider two trajectories of four one-dimensional inelastic particles colliding with restitution coefficient $r$, respectively denoted by $\left(p(k),q(k)\right)_{k \in \mathbb{N}}$ and $\left(p'(k),q'(k)\right)_{k \in \mathbb{N}}$. Let us assume that both trajectories present infinitely many collisions, starting from the two respective initial configurations $\left(p(0),q(0)\right)$ and $\left(p'(0),q'(0)\right) \in \mathbb{S}^2\times \mathbb{S}^2$, such that the same pairs of particles are in contact initially, that is:
\begin{align}
\forall j \in \{1,2,3\}, \hspace{2mm} p_j(0) = 0 \Leftrightarrow p'_j(0),
\end{align}
and such that the initial configurations of the two trajectories generate the same plane, that is:
\begin{align}
\text{Span}\left(p(0),q(0)\right) = \text{Span}\left(p'(0),q'(0)\right).
\end{align}
Then, the two trajectories generate the same planes at each collision, that is:
\begin{align}
\forall k \in \mathbb{N}^*, \hspace{2mm} \text{Span}\left(p(k),q(k)\right) = \text{Span}\left(p'(k),q'(k)\right).
\end{align}
\end{theorem}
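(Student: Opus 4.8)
The plan is to proceed by induction on the collision index $k$, carrying along the joint hypothesis that $\mathcal{P}(k) := \text{Span}\left(p(k),q(k)\right) = \text{Span}\left(p'(k),q'(k)\right) =: \mathcal{P}'(k)$ \emph{and} that the two trajectories have the same pair(s) of particles in contact at collision $k$ (i.e. the same zero components of $p(k)$ and $p'(k)$). The base case $k=0$ is exactly the hypothesis of the theorem. For the induction step I would separate a geometric claim (the type of the next collision is determined by the plane) from a linear‑algebraic claim (given the next collision type, the next plane is determined by the current one).

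\emph{Geometric step.} Suppose collision $k$ is, say, of type $\mathfrak{a}$, so that $p(k) = (0,\cos\theta,\sin\theta)$ with $\theta \in\ ]0,\pi/2[$ (the endpoints being multiple collisions, which are excluded by the standing assumption of infinitely many binary collisions), and similarly $p'(k)$. Restricting the coordinate functionals $x_1,x_2,x_3$ to the common plane $\mathcal{P} := \mathcal{P}(k)$, consider the convex cone $S := \mathcal{P}\cap\{x_1>0,\,x_2>0,\,x_3>0\}$. It is nonempty (it contains $p(k)+sq(k)$ for small $s>0$, since $x_1$ increases from $0$ while $x_2,x_3$ stay positive — here one uses that particles that just collided separate, i.e. $q_1(k)>0$), and since both trajectories admit a next collision, $S$ must be a genuine two‑dimensional sector: one of its two edges is the ray $\mathbb{R}_{\ge 0}\,p(k) = \mathbb{R}_{\ge 0}\,p'(k)$ (on which $x_1=0$), and I will call $R'$ the opposite edge. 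Because $x_1$ is strictly increasing along the forward trajectory, the trajectory ray from $p(k)$ cannot return to $\{x_1=0\}$, hence it leaves $S$ through $R'$; the coordinate plane(s) vanishing on $R'$ therefore fix the type of collision $k+1$, and this depends only on $\mathcal{P}$ and on $p(k)$, which are common to the two trajectories. So the next collisions have the same type, and $p(k+1)$ and $p'(k+1)$ both lie on the line carrying $R'$, hence are positive scalar multiples of each other — which also re‑establishes the contact‑pair part of the induction hypothesis at step $k+1$. (The case $R'=\mathbb{R}_{\ge 0}e_2$, corresponding to a simultaneous collision of types $\mathfrak{a}$ and $\mathfrak{c}$, is handled identically; the cases $R'=\mathbb{R}_{\ge 0}e_1$ or $\mathbb{R}_{\ge 0}e_3$ would force a triple collision and are excluded.)

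\emph{Linear‑algebraic step.} Let $K\in\{A,B,C\}$ (or $K=AC=CA$ in the simultaneous case) be the common collision matrix of collision $k+1$, so $q(k+1)=Kq(k)$ and $q'(k+1)=Kq'(k)$. The key observation is that each collision matrix fixes \emph{pointwise} the coordinate plane attached to its contact: $A$ fixes $\text{Span}(e_2,e_3)=\{x_1=0\}$, $B$ fixes $\{x_2=0\}$, $C$ fixes $\{x_3=0\}$ (and $AC$ fixes $\mathbb{R}e_2$), as one reads off the explicit matrices. Now write $q'(k)=\alpha\,p(k)+\beta\,q(k)\in\mathcal{P}$ and use the free‑flight relation $p(k+1)=p(k)+\tau_k\,q(k)$ with $\tau_k>0$, i.e. $p(k)=p(k+1)-\tau_k\,q(k)$. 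Applying $K$ and using $Kp(k+1)=p(k+1)$ (valid because $p(k+1)$ lies in the coordinate plane fixed by $K$), one obtains
\begin{align*}
q'(k+1) = \alpha\,Kp(k)+\beta\,Kq(k) = \alpha\,p(k+1) + (\beta-\alpha\tau_k)\,q(k+1) \in \text{Span}\left(p(k+1),q(k+1)\right) = \mathcal{P}(k+1).
\end{align*}
Since also $p'(k+1)\in\mathbb{R}\,p(k+1)\subset\mathcal{P}(k+1)$, it follows that $\mathcal{P}'(k+1)=\text{Span}\left(p'(k+1),q'(k+1)\right)\subseteq\mathcal{P}(k+1)$; both are two‑dimensional (the relative velocity just after collision $k+1$ has a nonzero component in the direction in which the just‑collided pair separates, while $p(k+1)$ has a zero component there, so they are not colinear), hence equal. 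This closes the induction.

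\emph{Main obstacle.} The linear‑algebraic step is essentially a one‑line computation once the pointwise‑fixing property of $A,B,C$ is spotted; the genuine work is the geometric step — proving that $S$ is a sector with exactly two edges, that the forward ray necessarily exits through the edge $R'$ opposite $p(k)$, and, crucially, that $R'$ (and hence the next collision type) depends on $\mathcal{P}$ and the contact face of $p(k)$ but \emph{not} on the particular velocity representative $q(k)$. Carefully discharging the degenerate configurations there — distinguishing a legitimate simultaneous $\mathfrak{a}$–$\mathfrak{c}$ collision from a triple collision, and invoking the hypothesis that both trajectories undergo infinitely many binary collisions to rule out the latter and to guarantee $S$ is non‑degenerate — is where the argument requires the most attention.
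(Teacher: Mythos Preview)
Your argument is correct, and the overall architecture (induction on $k$, carrying along both the common plane and the common contact face) matches the paper's. The geometric step is the same in spirit as the paper's: where you argue that the forward ray from $p(k)$ into the sector $S=\mathcal{P}\cap\{x_1,x_2,x_3>0\}$ must exit through the unique opposite edge $R'$, the paper computes the two candidate collision times explicitly and shows their order depends only on the sign of $\cos\varphi$, i.e.\ only on $\mathcal{P}$ and the contact face.

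Where the two proofs genuinely differ is the linear-algebraic step. The paper parametrizes the admissible velocity vectors as $q_\lambda(0)=q(0)+\lambda\,p(0)$ and then, in each of the three symmetry cases ($\mathfrak{ab}$, $\mathfrak{ac}$, $\mathfrak{bc}$), verifies by a page-long trigonometric computation that $(K p(0))\cdot\big[p(1)\wedge Kq(0)\big]=0$. Your observation that each collision matrix $K\in\{A,B,C\}$ fixes \emph{pointwise} the coordinate plane on which the collision occurs (so $Kp(k+1)=p(k+1)$) replaces all of that: writing $p(0)=p(1)-\tau\,q(0)$ gives $Kp(0)=p(1)-\tau\,q(1)\in\text{Span}(p(1),q(1))$ in one line, which is exactly the identity the paper checks by hand. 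So your route is strictly more conceptual and avoids the case splitting; the paper's route has the virtue of being fully explicit in coordinates, which it then reuses to write down the trigonometric and vectorial forms of the reduction map in the following subsections.
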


\begin{proof}
By symmetry, it is enough to consider only two types of initial data: on the one hand when the particles \circled{1} and \circled{2} are in contact, and on the other hand when \circled{2} and \circled{3} are in contact. Indeed, the case when \circled{3} and \circled{4} are in contact corresponds by symmetry to the first case.\\
If \circled{1} and \circled{2} are initally in contact, two types of collision can take place in the future: a collision of type $\mathfrak{b}$, between \circled{2} and \circled{3}, and a collision of type $\mathfrak{c}$, between \circled{3} and \circled{4}. If \circled{2} and \circled{3} are initially in contact, by symmetry, only one collision has to be described in the future: a collision of type $\mathfrak{c}$, between \circled{3} and \circled{4} (by symmetry, we would also cover the case of a collision between \circled{1} and \circled{2}).\\
We will study these three cases separately.\\
\newline
$\underline{\mathfrak{a}\mathfrak{b}}$. Let us assume that \circled{1} and \circled{2} are initially in contact, and that the next collision to follow involves \circled{2} and \circled{3}. We describe then the sequence of the two collisions $\mathfrak{a} \mathfrak{b}$.\\
In this case, the initial configuration $p(0)$, after renormalization, can be written as:
\begin{align}
\label{EQUATInitiPositCol_a}
p(0) = \begin{pmatrix}
0 \\
\cos\theta \\
\sin\theta
\end{pmatrix},
\end{align}
for a certain angle $\theta \in [0,\pi/2[$. Note that we have to exclude the case $\theta = \pi/2$, because then the vector of the initial positions would have the form $\begin{pmatrix} 0 \\ 0 \\ 1 \end{pmatrix}$, meaning that \circled{1}, \circled{2} and \circled{3} are all initially in contact, which corresponds to an ill-posed configuration of the particle system.\\
As for the vector of the initial velocities $q(0)$, in order to simplify the computations, we will not assume that it is normalized, but we will assume that the tangential component of $q(0)$ is normalized. More precisely, the tangent plane of the unit sphere at $p(0)$ is
\begin{align}
\text{Span}(e_\theta,e_x) = \text{Span}\left( \begin{pmatrix} 0 \\ -\sin\theta \\ \cos\theta \end{pmatrix},\begin{pmatrix} 1 \\ 0 \\ 0 \end{pmatrix}\right),
\end{align}
so that the (normalized) tangential part of $q(0)$ writes:
\begin{align}
\cos\varphi \cdot e_\theta + \sin\varphi \cdot e_x = \cos\varphi \begin{pmatrix} 0 \\ -\sin\theta \\ \cos\theta \end{pmatrix} + \sin\varphi \begin{pmatrix} 1 \\ 0 \\ 0 \end{pmatrix}
\end{align}
for a certain angle $\varphi \in\ ]0,\pi[$, and so, there exists $\lambda \in \mathbb{R}$ such that the vector of the initial velocities $q(0)$ writes:
\begin{align}
\label{EQUATInitiVelocCol_a}
q(0) = \cos\varphi \cdot e_\theta + \sin\varphi \cdot e_x + \lambda e_r = \begin{pmatrix} \sin\varphi \\ -\cos\varphi\sin\theta + \lambda \cos\theta \\ \cos\varphi\cos\theta + \lambda \sin\theta \end{pmatrix}.
\end{align}
To emphasize that the different vectors of initial velocities depend on $\lambda$, we will denote them by $q_\lambda(0)$ instead of $q(0)$.\\
In this way, we are indeed considering all the initial configurations which generate the same plane $\mathcal{P}(0) = \text{Span}\left(p(0),q(0)\right)$. Indeed, the trajectories start all from $p(0)$, and since their respective vectors of initial velocities are $q_\lambda(0)$, each trajectory belongs to the respective planes $\mathcal{P}_\lambda = \text{Span}\left(p(0),q_\lambda(0)\right)$. But since $e_r$ and $p(0)$ are colinear, the choice of $\lambda$ has no impact, so all the $\mathcal{P}_\lambda$ are the same.\\
By assumption the next collision is of type $\mathfrak{b}$, so that $\varphi \in\ ]0,\pi/2[$ (and observe also that, then $\cos\varphi > 0$). To see this, we simply compute the collision times $t_\mathfrak{b}$ and $t_\mathfrak{c}$, obtained if the next collision is of type $\mathfrak{b}$ or $\mathfrak{c}$. We find:
\begin{align}
t_\mathfrak{b} = \frac{\cos\theta}{\sin\theta\cos\varphi - \lambda\cos\theta} = \frac{1}{\tan\theta\cos\varphi - \lambda} \hspace{5mm} \text{and} \hspace{5mm} t_\mathfrak{c} = \frac{\sin\theta}{-\cos\theta\cos\varphi -\lambda\sin\theta} = \frac{1}{-\frac{\cos\varphi}{\tan\theta} - \lambda},
\end{align}
so that
\begin{align}
\label{EQUATCompaColliTimesAfterColl_a}
t_\mathfrak{c} - t_\mathfrak{b} = \frac{\cos\varphi\tan\theta + \cos\varphi/\tan\theta}{(-\cos\varphi/\tan\theta -\lambda)(\tan\theta\cos\varphi - \lambda)},
\end{align}
and so, if both $t_\mathfrak{b}$ and $t_\mathfrak{c}$ are positive, the sign of $t_\mathfrak{c} - t_\mathfrak{b}$ is given by the sign of $\cos\varphi$.\\
Nevertheless, let us make the following observations. On the one hand, the signs of the collision times $t_\mathfrak{b}$ and $t_\mathfrak{c}$ depend also on $\lambda$. On the other hand, when $\cos\varphi > 0$, if $t_\mathfrak{b}$ is negative, we have $0 < \tan\theta \cos\varphi < \lambda$, so that $-\cos\varphi/\tan\theta < \lambda$, and so $t_\mathfrak{c}$ is negative as well. Therefore, we have the following cases: in the case when $\cos\varphi > 0$, either $t_\mathfrak{b}$ and $t_\mathfrak{c}$ are both negative, or $t_\mathfrak{b}$ is positive, in which case either $t_\mathfrak{c}$ is negative, or $t_\mathfrak{c}$ is positive but then smaller than $t_\mathfrak{b}$. We can then conclude that if some collision takes place in the future after a collision of type $\mathfrak{a}$, in the case $\cos\varphi > 0$, the first collision has to be of type $\mathfrak{b}$.\\
The opposite conclusion follows in the case $\cos\varphi < 0$: a collision of type $\mathfrak{a}$, if followed by any, has to be followed by a collision of type $\mathfrak{c}$.\\
Let us now go back to the description of a trajectory such that a collision of type $\mathfrak{b}$ follows a collision of type $\mathfrak{a}$. Until this collision of type $\mathfrak{b}$, the trajectory of the particle system writes:
\begin{align}
\label{EQUATTraje_fromCol_a}
p(t) = p(0) + tq_\lambda(0) = \begin{pmatrix}
0 \\
\cos\theta \\
\sin\theta
\end{pmatrix} + t \begin{pmatrix} \sin\varphi \\ -\cos\varphi\sin\theta + \lambda \cos\theta \\ \cos\varphi\cos\theta + \lambda \sin\theta \end{pmatrix}.
\end{align}
The collision $\mathfrak{b}$ takes place at a time that we denote by $t_\lambda$, characterized by the condition $p_2(t_\lambda) = 0$, that is:
\begin{align}
\cos\theta + t_\lambda\left( \lambda\cos\theta - \cos\varphi\sin\theta \right) = 0,
\end{align}
or again
\begin{align}
t_\lambda = \frac{\cos\theta}{\cos\varphi\sin\theta - \lambda\cos\theta} \cdotp
\end{align}
Therefore, the position $p_\lambda(1)$, at the time $t_\lambda$, is:
\begin{align}
p_\lambda(1) &= \begin{pmatrix}
0 \\
\cos\theta \\
\sin\theta
\end{pmatrix} + \frac{\cos\theta}{\cos\varphi\sin\theta - \lambda\cos\theta} \begin{pmatrix} \sin\varphi \\ -\cos\varphi\sin\theta + \lambda \cos\theta \\ \cos\varphi\cos\theta + \lambda \sin\theta \end{pmatrix} \nonumber\\
&= \begin{pmatrix} \frac{\sin\varphi\cos\theta}{\cos\varphi\sin\theta-\lambda\cos\theta} \\ 0 \\ \frac{\cos\varphi\sin^2\theta - \lambda\cos\theta\sin\theta + \cos\varphi\cos^2\theta + \lambda \cos\theta\sin\theta}{\cos\varphi\sin\theta - \lambda\cos\theta} \end{pmatrix} \nonumber\\
&= \frac{1}{\cos\varphi\sin\theta - \lambda\cos\theta} \begin{pmatrix} \sin\varphi \cos\theta \\ 0 \\ \cos\varphi \end{pmatrix}.
\end{align}
The last computation shows that all the $p_\lambda(1)$ are colinear, for all the choices of $\lambda$, which is consistent with the fact that all the $\mathcal{P}_\lambda(0)$ are the same, so their respective intersections with the plane $y=0$ (which contain then $p_\lambda(1)$) are colinear.\\
When the collision $\mathfrak{b}$ takes place, the vector of the velocities $q(0)$ is modified and becomes $q_\lambda(1) = B q_\lambda(0)$. We have to check that the planes $\mathcal{P}_\lambda(1) = \text{Span}\left(p_\lambda(1),q_\lambda(1)\right)$ all coincide. In other words, we want to check that the direction of the vector:
\begin{align}
p_\lambda(1) \wedge q_\lambda(1)
\end{align}
does not depend on $\lambda$. If we denote by $p(1)$ and $q(1)$ the vectors of positions and velocities respectively, both obtained when $\lambda = 0$, we have to show, equivalently, that
\begin{align}
p_\lambda(1) \wedge q_\lambda(1) \hspace{3mm} \text{and} \hspace{3mm} p(1)\wedge q(1)
\end{align}
are colinear, for all $\lambda$. But since $p_\lambda(1)$ and $p(1)$ are colinear, this is equivalent to show that $q_\lambda(1)$ belongs to the plane generated by $p(1)$ and $q(1)$. This last condition is equivalent to:
\begin{align}
q_\lambda(1) \cdot \left[ p(1)\wedge q(1) \right] = 0,
\end{align}
which, by linearity, is equivalent to:
\begin{align}
(B e_r) \cdot \left[ p(1)\wedge q(1) \right] = 0.
\end{align}
This verification is obtained by the following straightforward computation. Let us observe that we can simplify the computation by omitting the denominator of the expression of $p_\lambda(1)$, which will play no role in the colinearity we want to check. $p(1) \wedge q(1)$ is then colinear to the following vector, and we have:
\begin{align}
\begin{pmatrix} \sin\varphi \cos\theta \\ 0 \\ \cos\varphi \end{pmatrix} \wedge (B q(0)) &= \begin{pmatrix} \sin\varphi \cos\theta \\ 0 \\ \cos\varphi \end{pmatrix} \wedge \left( \begin{pmatrix} 1 & (1+r)/2 & 0 \\ 0 & -r & 0 \\ 0 & (1+r)/2 & 1 \end{pmatrix} \begin{pmatrix} \sin\varphi \\ -\cos\varphi\sin\theta \\ \cos\varphi\cos\theta \end{pmatrix} \right) \nonumber\\
&= \begin{pmatrix} \sin\varphi \cos\theta \\ 0 \\ \cos\varphi \end{pmatrix} \wedge \begin{pmatrix} \sin\varphi - \frac{1+r}{2}\cos\varphi\sin\theta \\ r \cos\varphi\sin\theta \\ -\frac{1+r}{2}\cos\varphi\sin\theta + \cos\varphi\cos\theta \end{pmatrix} \nonumber\\
&= \begin{pmatrix}
- r \cos^2\varphi \sin\theta \\
\cos\varphi\sin\varphi - \frac{1+r}{2}\cos^2\varphi\sin\theta + \frac{1+r}{2}\cos\varphi\sin\varphi\cos\theta\sin\theta - \cos\varphi\sin\varphi\cos^2\theta \\
r \cos\varphi\sin\varphi\cos\theta\sin\theta
\end{pmatrix}.
\end{align}
Since $Be_r$ is equal to:
\begin{align}
B e_r = \begin{pmatrix} 1 & (1+r)/2 & 0 \\ 0 & -r & 0 \\ 0 & (1+r)/2 & 1 \end{pmatrix} \begin{pmatrix} 0 \\ \cos\theta \\ \sin\theta \end{pmatrix} &= \begin{pmatrix} \frac{1+r}{2}\cos\theta \\ -r\cos\theta \\ \frac{1+r}{2}\cos\theta + \sin\theta \end{pmatrix},
\end{align}
we find:
\begin{align}
&\hspace{-10mm}(B e_r) \cdot \left[ (\cos\varphi\sin\theta) p(1)\wedge q(1) \right] \nonumber\\
&= \begin{pmatrix} \frac{1+r}{2}\cos\theta \\ -r\cos\theta \\ \frac{1+r}{2}\cos\theta + \sin\theta \end{pmatrix} \nonumber\\
&\hspace{5mm}\cdot \begin{pmatrix}
- r \cos^2\varphi \sin\theta \\
\cos\varphi\sin\varphi - \frac{1+r}{2}\cos^2\varphi\sin\theta + \frac{1+r}{2}\cos\varphi\sin\varphi\cos\theta\sin\theta - \cos\varphi\sin\varphi\cos^2\theta \\
r \cos\varphi\sin\varphi\cos\theta\sin\theta
\end{pmatrix} \nonumber\\
&= \left(-r\frac{1+r}{2}\cos^2\varphi\cos\theta\sin\theta\right)
+ \big( -r \cos\varphi\sin\varphi\cos\theta + r\frac{1+r}{2}\cos^2\varphi\cos\theta\sin\theta \nonumber\\
&\hspace{5mm}- r\frac{1+r}{2}\cos\varphi\sin\varphi\cos^2\theta\sin\theta+r\cos\varphi\sin\varphi\cos^3\theta \big) \nonumber\\
&\hspace{5mm}+ \left( r\frac{1+r}{2}\cos\varphi\sin\varphi\cos^2\theta\sin\theta + r \cos\varphi\sin\varphi\cos\theta\sin^2\theta \right) \nonumber\\
&= -r \cos\varphi\sin\varphi\cos\theta + r \cos\varphi\sin\varphi\cos\theta (\cos^2\theta + \sin^2\theta) = 0.
\end{align}
We have shown that any trajectory starting from a plane $\mathcal{P}(0)$ with a collision of type $\mathfrak{a}$, immediately followed by a collision of type $\mathfrak{b}$, is contained in a plane $\mathcal{P}(1)$ after this second collision, and this plane $\mathcal{P}(1)$ depends only on $\mathcal{P}(0)$.\\
\newline
$\underline{\mathfrak{a}\mathfrak{c}}$. Let us assume that \circled{1} and \circled{2} are initially in contact, and that the next collision to follow involves \circled{3} and \circled{4}. We describe then the sequence of the two collisions $\mathfrak{a} \mathfrak{c}$.\\
In this case, the initial configuration of the system is the same as in the previous case, so that $p(0)$ and $q(0)$ are given by the expressions \eqref{EQUATInitiPositCol_a} and \eqref{EQUATInitiVelocCol_a}, while the trajectory until the first collision $\mathfrak{c}$ is written as in \eqref{EQUATTraje_fromCol_a}. However, assuming that the next collision is of type $\mathfrak{c}$ is equivalent to assume $\varphi \in\ ]\pi/2,\pi[$ (indeed, observe that in this case we need to have $\cos\varphi < 0$, see \eqref{EQUATCompaColliTimesAfterColl_a}). The collision time $t_\lambda$ is now defined such that $p_3(t_\lambda) = 0$, so that we have:
\begin{align}
\sin \theta + t_\lambda \left( \cos\varphi \cos\theta + \lambda \sin\theta \right) = 0,
\end{align}
that is:
\begin{align}
t_\lambda = \frac{-\sin\theta}{\cos\varphi \cos\theta + \lambda\sin\theta} \cdotp
\end{align}
Therefore, the vector of positions of the particle system right after the collision $\mathfrak{c}$ becomes:
\begin{align}
p_\lambda(1) &= \begin{pmatrix} 0 \\ \cos\theta \\ \sin\theta \end{pmatrix} + \frac{-\sin\theta}{\cos\varphi \cos\theta + \lambda\sin\theta} \begin{pmatrix} \sin\varphi \\ -\cos\varphi\sin\theta + \lambda \cos\theta \\ \cos\varphi\cos\theta + \lambda \sin\theta \end{pmatrix} \nonumber\\
&= \begin{pmatrix}
\frac{-\sin\varphi \sin\theta}{\cos\varphi\cos\theta + \lambda\sin\theta} \\
\frac{\cos\varphi\cos^2\theta + \lambda \cos\theta\sin\theta + \cos\varphi\sin^2\theta - \lambda \cos\theta \sin\theta}{\cos\varphi\cos\theta + \lambda\sin\theta}\\
0
\end{pmatrix} \nonumber\\
&= \frac{1}{\cos\varphi\cos\theta + \lambda\sin\theta} \begin{pmatrix}
-\sin\varphi \sin\theta \\
\cos\varphi \\
0
\end{pmatrix}.
\end{align}
As in the previous step, the plane $\mathcal{P}_\lambda(1) = \text{Span}\left(p_\lambda(1),q_\lambda(1)\right)$ will not depend on $\lambda$ if and only if:
\begin{align}
(C e_r) \cdot \left[p(1) \wedge q(1)\right] = 0.
\end{align}
In the present case, $p(1) \wedge q(1)$ is colinear to
\begin{align}
\begin{pmatrix} -\sin\varphi\sin\theta \\ \cos\theta \\ 0 \end{pmatrix} \wedge \left( C q(0) \right) &= \begin{pmatrix} -\sin\varphi\sin\theta \\ \cos\varphi \\ 0 \end{pmatrix} \wedge \left( \begin{pmatrix} 1 & 0 & 0 \\ 0 & 1 & (1+r)/2 \\ 0 & 0 & -r \end{pmatrix} \begin{pmatrix} \sin\varphi \\ -\cos\varphi\sin\theta \\ \cos\varphi\cos\theta \end{pmatrix} \right) \nonumber\\
&= \begin{pmatrix} -\sin\varphi\sin\theta \\ \cos\varphi \\ 0 \end{pmatrix} \wedge \begin{pmatrix}
\sin\varphi \\
-\cos\varphi\sin\theta + \frac{1+r}{2}\cos\varphi\cos\theta \\
-r\cos\varphi\cos\theta 
\end{pmatrix}\nonumber\\
&= \begin{pmatrix}
- r \cos^2\varphi \cos\theta \\
-r \cos\varphi\sin\varphi \cos\theta \sin\theta \\
\cos\varphi \sin\varphi \sin^2\theta - \frac{1+r}{2} \cos\varphi\sin\varphi \cos\theta \sin\theta - \cos\varphi\sin\varphi
\end{pmatrix},
\end{align}
and since $C e_r$ is equal to:
\begin{align}
C e_r = \begin{pmatrix} 1 & 0 & 0 \\ 0 & 1 & (1+r)/2 \\ 0 & 0 & -r \end{pmatrix} \begin{pmatrix} 0 \\ \cos\theta \\ \sin\theta \end{pmatrix} = \begin{pmatrix} 0 \\ \cos\theta + \frac{1+r}{2}\sin\theta \\ -r\sin\theta \end{pmatrix}
\end{align}
in the end we obtain:
\begin{align}
&\hspace{-10mm} (C e_r) \cdot \left[(\cos\varphi\cos\theta)p(1) \wedge q(1)\right] \nonumber\\
&= \begin{pmatrix} 0 \\ \cos\theta + \frac{1+r}{2}\sin\theta \\ -r\sin\theta \end{pmatrix} \nonumber\\
&\hspace{5mm} \cdot \begin{pmatrix}
- r \cos^2\varphi \cos\theta \\
-r \cos\varphi\sin\varphi \cos\theta \sin\theta \\
\cos\varphi \sin\varphi \sin^2\theta - \frac{1+r}{2} \cos\varphi\sin\varphi \cos\theta \sin\theta - \cos\varphi\sin\varphi
\end{pmatrix} \nonumber\\
&= \left( -r\cos\varphi\sin\varphi\cos^2\theta\sin\theta - r \frac{1+r}{2}\cos\varphi\sin\varphi\cos\theta\sin^2\theta \right) \nonumber\\
&\hspace{5mm} + \left( -r\cos\varphi\sin\varphi\sin^3\theta + r \frac{1+r}{2}\cos\varphi\sin\varphi\cos\theta\sin^2\theta + r \cos\varphi\sin\varphi\sin\theta \right) \nonumber\\
&= -r\cos\varphi\sin\varphi(\cos^2\theta + \sin^2\theta)\sin\theta + r \cos\varphi\sin\varphi \sin\theta = 0.
\end{align}
Therefore, in this case also we have shown that any trajectory starting from a plane $\mathcal{P}(0)$ with a collision of type $\mathfrak{a}$, immediately followed by a collision of type $\mathfrak{c}$, is contained in a plane $\mathcal{P}(1)$ after this second collision, and this plane $\mathcal{P}(1)$ depends only on $\mathcal{P}(0)$.\\
\newline
$\underline{\mathfrak{b}\mathfrak{c}}$. In this last case, we assume that the particles \circled{2} and \circled{3} are initially in contact, and that the next collision to take place involves the particles \circled{3} and \circled{4}. We will describe the sequence of collisions $\mathfrak{b}\mathfrak{c}$.\\
In this case, the vector of the initial positions $p(0)$ is:
\begin{align}
p(0) = \begin{pmatrix}
\sin\theta \\ 0 \\ \cos\theta
\end{pmatrix},
\end{align}
for an angle $\theta \in\ ]0,\pi/2[$. Let us observe here that we have to exclude the two cases $\theta = 0$ and $\theta = \pi/2$, which would imply respectively that \circled{1} and \circled{2}, and \circled{3} and \circled{4} are in contact, yielding in both cases an ill-posed dynamics due to a triple collision.\\
As for the two previous cases, let us consider the vector of the initial velocities such that its tangential component, with respect to the tangent plane of the unit sphere at $p(0)$, is normalized. In the present case, the tangent space of the unit sphere at $p(0)$ writes:
\begin{align}
\text{Span}\left( \begin{pmatrix} \cos\theta \\ 0 \\ -\sin\theta \end{pmatrix},\begin{pmatrix} 0 \\ 1 \\ 0 \end{pmatrix} \right),
\end{align}
and so, assuming that the tangential component of $q_\lambda(0)$ is normalized implies that there exists $\lambda \in \mathbb{R}$ and an angle $\varphi \in\ ]0,\pi[$ such that:
\begin{align}
q_\lambda(0) = \cos\varphi \cdot e_\theta + \sin\varphi \cdot e_y + \lambda e_r = \begin{pmatrix}
\cos\varphi\cos\theta + \lambda\sin\theta \\
\sin\varphi \\
-\cos\varphi\sin\theta + \lambda\cos\theta
\end{pmatrix}.
\end{align}
Since we assumed that the next collision to take place is involving \circled{3} and \circled{4}, we have $\varphi \in\ ]0,\pi/2[$, because the collision times $t_\mathfrak{c}$ and $t_\mathfrak{a}$ write respectively (assuming that the first collision to follow is, respectively, of type $\mathfrak{c}$ or $\mathfrak{a}$):
\begin{align}
t_\mathfrak{c} = \frac{\cos\theta}{\sin\theta\cos\varphi - \lambda\cos\theta} = \frac{1}{\tan\theta\cos\varphi - \lambda} \hspace{5mm} \text{and} \hspace{5mm} t_\mathfrak{a} = \frac{\sin\theta}{-\cos\theta\cos\varphi - \lambda\sin\theta} = \frac{1}{-\frac{\cos\varphi}{\tan\theta} - \lambda},
\end{align}
so that $t_\mathfrak{c}$ is always the smallest positive collision time if and only if $\cos\varphi > 0$.\\
Then, the collision time $t_\lambda$ is defined such that $p_3(t_\lambda) = p_3(0) + t_\lambda q_{3,\lambda}(0) = 0$, that is:
\begin{align}
t_\lambda = \frac{\cos\theta}{\cos\varphi\sin\theta - \lambda\cos\theta} \cdotp
\end{align}
The vector $p_\lambda(1)$ of the positions at the collision time $t_\lambda$ becomes:
\begin{align}
p_\lambda(1) &= \begin{pmatrix} \sin\theta \\ 0 \\ \cos\theta \end{pmatrix} + \frac{\cos\theta}{\cos\varphi\sin\theta - \lambda\cos\theta} \begin{pmatrix}
\cos\varphi\cos\theta + \lambda\sin\theta \\
\sin\varphi \\
-\cos\varphi\sin\theta + \lambda\cos\theta
\end{pmatrix} \nonumber\\
&= \begin{pmatrix}
\frac{\cos\varphi\sin^2\theta - \lambda\cos\theta\sin\theta + \cos\varphi \cos^2\theta + \lambda\cos\theta\sin\theta}{\cos\varphi\sin\theta - \lambda\cos\theta} \\
\frac{\sin\varphi\cos\theta}{\cos\varphi\sin\theta-\lambda\cos\theta}\\
0
\end{pmatrix} \nonumber\\
&= \frac{1}{\cos\varphi\sin\theta - \lambda\cos\theta} \begin{pmatrix}
\cos\varphi \\ \sin\varphi\cos\theta \\ 0
\end{pmatrix}.
\end{align}
In this last step, the plane $\mathcal{P}_\lambda(1) = \text{Span}\left(p_\lambda(1),q_\lambda(1)\right)$ will not depend on $\lambda$ if and only if:
\begin{align}
(C e_r) \cdot \left[p(1) \wedge q(1)\right] = 0.
\end{align}
Here, $p(1) \wedge q(1)$ is colinear to
\begin{align}
\begin{pmatrix} \cos\varphi \\ \sin\varphi\cos\theta \\ 0 \end{pmatrix} \wedge (Cq(0)) &= \begin{pmatrix} \cos\varphi \\ \sin\varphi\cos\theta \\ 0 \end{pmatrix} \wedge \left( \begin{pmatrix} 1 & 0 & 0 \\ 0 & 1 & (1+r)/2 \\ 0 & 0 & -r \end{pmatrix} \begin{pmatrix}
\cos\varphi\cos\theta \\
\sin\varphi \\
-\cos\varphi\sin\theta
\end{pmatrix}\right) \nonumber\\
&= \begin{pmatrix} \cos\varphi \\ \sin\varphi\cos\theta \\ 0 \end{pmatrix} \wedge \begin{pmatrix} \cos\varphi\cos\theta \\ \sin\varphi - \frac{1+r}{2}\cos\varphi\sin\theta \\ r\cos\varphi\sin\theta \end{pmatrix} \nonumber\\
&= \begin{pmatrix}
r \cos\varphi\sin\varphi \cos\theta\sin\theta \\
- r \cos^2\varphi \sin\theta \\
\cos\varphi\sin\varphi - \frac{1+r}{2}\cos^2\varphi\sin\theta - \cos\varphi\sin\varphi \cos^2\theta
\end{pmatrix}.
\end{align}
In this last case $Ce_r$ is equal to:
\begin{align}
C e_r = \begin{pmatrix} 1 & 0 & 0 \\ 0 & 1 & (1+r)/2 \\ 0 & 0 & -r \end{pmatrix} \begin{pmatrix} \sin\theta \\ 0 \\ \cos\theta \end{pmatrix} = \begin{pmatrix} \sin\theta \\ \frac{1+r}{2}\cos\theta \\ -r\cos\theta \end{pmatrix}
\end{align}
so that we find in the end:
\begin{align}
&\hspace{-10mm} (C e_r) \cdot \left[(\cos\varphi\sin\theta)p(1) \wedge q(1)\right] \nonumber\\
&= \begin{pmatrix} \sin\theta \\ \frac{1+r}{2}\cos\theta \\ -r\cos\theta \end{pmatrix} \cdotp \begin{pmatrix}
r \cos\varphi\sin\varphi\cos\theta\sin\theta \\
- r \cos^2\varphi \sin\theta \\
\cos\varphi\sin\varphi - \frac{1+r}{2}\cos^2\varphi\sin\theta - \cos\varphi\sin\varphi \cos^2\theta
\end{pmatrix} \nonumber\\
&= \left(r \cos\varphi \sin\varphi \cos\theta\sin^2\theta \right) \nonumber\\
&\hspace{5mm} + \left(-r\frac{1+r}{2} \cos^2\varphi \cos\theta\sin\theta  \right) \nonumber\\
&\hspace{5mm} + \left( -r \cos\varphi\sin\varphi \cos\theta + r\frac{1+r}{2}\cos^2\varphi \cos\theta\sin\theta + r \cos\varphi\sin\varphi \cos^3\theta\right) \nonumber\\
&= r\cos\varphi\sin\varphi\cos\theta(\sin^2\theta+\cos^2\theta) - r\cos\varphi\sin\varphi\cos\theta = 0.
\end{align}
As a consequence, also in this last case we have shown that any trajectory starting from a plane $\mathcal{P}(0)$ with a collision of type $\mathfrak{b}$, immediately followed by a collision of type $\mathfrak{c}$, is contained in a plane $\mathcal{P}(1)$ after this second collision, and this plane $\mathcal{P}(1)$ depends only on $\mathcal{P}(0)$.\\
Since we treated all the possible cases, the proof of Theorem \ref{THEORSpherical_Reduc} is complete.
\end{proof}

\noindent
Theorem \ref{THEORSpherical_Reduc} has an important consequence. Let us repeat the classical argument of reduction of the dimension.\\ Naively, one can describe the evolution of the particle system with two vectors of $\mathbb{R}^3$, describing respectively the relative positions and velocities at all time. However, it is possible to provide a much more economical description. First one can proceed to the natural reduction, using a discrete dynamical system, where the two relative positions and velocities vectors are given only at the times of collisions. One can reduce the description further using the scalings of the dynamics, by assuming that the initial relative positions and velocities are both normalized. At this step, we have a discrete, $6$-dimensional dynamical system, with initial data lying in a $4$-dimensional space. One can finally reduce the dimensionality further: in the discrete description, a pair of particles is colliding at each step. Therefore, one of the three relative distances is zero at each step. So, up to keep track of which pair is in contact, by using some index $c_k \in \{\mathfrak{a},\mathfrak{b},\mathfrak{c}\}$, the discrete dynamical system can be parametrized by elements in $\left( \{\mathfrak{a},\mathfrak{b},\mathfrak{c}\} \times \mathbb{R}^2 \right) \times \mathbb{R}^3$.\\
\newline
Theorem \ref{THEORSpherical_Reduc} states that a much stronger reduction is possible, proceeding as follows. Let us describe the type of each collision of the particle system by its index $c_k \in \{\mathfrak{a},\mathfrak{b},\mathfrak{c}\}$. This index indicates which particles are in contact at the time of the collision we consider. Then, each collision $k$ can be described by a unique vectorial plane $\mathcal{P}(k)$, as follows: the index $c_k$ indicates in which plane lies the vector of relative positions ($x = 0$ if $c_k = \mathfrak{a}$, $y = 0$ if $c_k = \mathfrak{b}$ and $z=0$ if $c_k = \mathfrak{c}$). Therefore, from $\mathcal{P}(k)$ one can deduce the direction of the vector of positions: it is orientated by the intersection between $\mathcal{P}(k)$ and the plane $x$, $y$ or $z=0$ given by index $c_k$. The normalized position vector $\widetilde{p}(k)$ is then lying in the first octant, at the intersection between the unit sphere, the plane $\mathcal{P}(k)$, and the plane selected by the index $c_k$. Finally, the vector of the relative velocities will be given by the following construction: it will be orientated by the intersection between the plane $\mathcal{P}(k)$ and the tangent plane to the unit sphere at the point $\widetilde{p}(k)$.\\
As a consequence, at each collision, only one integer and two real parameters are required to perform this description: the index $c_k \in \{\mathfrak{a},\mathfrak{b},\mathfrak{c}\}$, and two real angles to describe the orientation of the vectorial plane $\mathcal{P}(k)$.\\
\newline
Of course, this description forgets important information about the original particle system: in particular, we see that the norm of the vector of the relative positions is lost, as well as the normal component (with respect to the tangent plane of the unit sphere at the point $\widetilde{p}(k)$) of the vector of the velocities. In particular, the information on the different times between two consecutive collisions is lost.\\
Nevertheless, the mapping, that can be written as:
\begin{align}
\label{EQUATSpherReducMappi}
\left\{
\begin{array}{rcl}
\{\mathfrak{a},\mathfrak{b},\mathfrak{c}\} \times \mathbb{S}^2 &\rightarrow& \{\mathfrak{a},\mathfrak{b},\mathfrak{c}\} \times \mathbb{S}^2,\\
(c_k,\mathcal{P}(k)) &\mapsto& (c_{k+1},\mathcal{P}(k+1))
\end{array}
\right.
\end{align}
keeps track of the order of the collisions. This information is in particular independent from the initial configurations, provided that it belongs to a fixed plane $\mathcal{P}(0)$.\\
All in all, Theorem \ref{THEORSpherical_Reduc} can be interpreted as the exhibition of a hidden conserved quantity of the $4$-particle system.

\begin{remark}
\label{REMARInfinitely_ManyColli}
If now we make abstraction of the original particle system, and if we consider only the mapping \eqref{EQUATSpherReducMappi}, let us observe that we actually defined a billiard on a portion of the unit sphere $\mathbb{S}^2$. More precisely, we defined a billiard on $\mathcal{O}_{\mathbb{S}^2} = \mathcal{O}\cap \mathbb{S}^2 = \{(x,y,z) \in \mathbb{S}^2\ /\ x \geq 0, y \geq 0, z \geq 0\}$. For this reason, we named this construction the \emph{spherical billiard reduction} of the one-dimensional four-particle system.\\
In particular, for any initial datum, and assuming that the orbit of such a billiard never meets one of the corners of $\mathcal{O}_{\mathbb{S}^2}$, the procedure we described associates to such an initial datum an infinite sequence of collisions.\\
This is a fundamental difference with the physical system of four hard spheres, because some initial configurations might lead to an eventual separation of the particles, that is, trajectories with a finite number of collisions.\\
Actually, the spherical billiard reduction tells that to any initial configuration of the one-dimensional four-particle system, it is possible to associate one single infinite sequence of collisions. In the physical system, it might be that only a finite number of such collisions will indeed be realized, but the sequence of the realized collisions has always to match with the beginning of the infinite sequence provided by the spherical reduction.
\end{remark}

\subsection{The final expressions of the spherical reduction}
\noindent
In the end, as a corollary of Theorem \ref{THEORSpherical_Reduc}, the evolution of the dynamical system can be reduced to a three-dimensional mapping (the third dimension corresponding to the index of the colliding pair). In this section, we give two different expressions for such a mapping.

\subsubsection{Spherical reduction, trigonometric version}
\label{SSectSpherReducTrigo}

\noindent
When writing the normalized position and velocity vectors $p$ and $q$ in coordinates, we obtain the following expressions for the spherical reduction mapping \eqref{EQUATSpherReducMappi}.

\begin{itemize}
\item if \circled{1} and \circled{2} are in contact:
$p(0) = \left(0,\cos\theta,\sin\theta\right)$ and $q(0) = \left(\sin\varphi,-\sin\theta\cos\varphi,\cos\theta\cos\varphi\right)$, and:
\begin{itemize}
    \item if $\varphi \in\ ]0,\pi/2[$, the next collision is of type \circled{2}-\circled{3}, and:\begin{align}
    p(t_1) &= \frac{1}{\sqrt{\cos^2\theta\sin^2\varphi + \cos^2\varphi}}\left(\cos\theta\sin\varphi,0,\cos\varphi\right),\\
    q(t_1) &= \frac{B q(0) - \left(B q(0) \cdot p(t_1)\right) p(t_1)}{\vertii{B q(0) - \left(B q(0) \cdot p(t_1)\right) p(t_1)}},
    \end{align}
    \item if $\varphi \in\ ]\pi/2,\pi[$, the next collision is of type \circled{3}-\circled{4}, and:
    \begin{align}
    p(t_1) &= \frac{1}{\sqrt{\sin^2\theta\sin^2\varphi+\cos^2\varphi}}\left(\sin\theta\sin\varphi,-\cos\varphi,0\right),\\
    q(t_1) &= \frac{C q(0) - \left(C q(0) \cdot p(t_1)\right) p(t_1)}{\vertii{C q(0) - \left(C q(0) \cdot p(t_1)\right) p(t_1)}},
    \end{align}
\end{itemize}
\item if \circled{2} and \circled{3} are in contact:
$p(0) = \left(\sin\theta,0,\cos\theta\right)$ and $q(0) = \left(\cos\theta\cos\varphi,\sin\varphi,-\sin\theta\cos\varphi\right)$, and:
\begin{itemize}
    \item if $\varphi \in\ ]0,\pi/2[$, the next collision is of type \circled{3}-\circled{4}, and:
    \begin{align}
    p(t_1) &= \frac{1}{\sqrt{\cos^2\varphi+\cos^2\theta\sin^2\varphi}}\left(\cos\varphi,\cos\theta\sin\varphi,0\right),\\
    q(t_1) &= \frac{C q(0) - \left(C q(0) \cdot p(t_1)\right) p(t_1)}{\vertii{C q(0) - \left(C q(0) \cdot p(t_1)\right) p(t_1)}},
    \end{align}
    \item if $\varphi \in\ ]\pi/2,\pi[$, the next collision is of type \circled{1}-\circled{2}, and:
    \begin{align}
    p(t_1) &= \frac{1}{\sqrt{\sin^2\theta\sin^2\varphi + \cos^2\varphi}}\left(0,\sin\theta\sin\varphi,-\cos\varphi\right),\\
    q(t_1) &= \frac{A q(0) - \left(A q(0) \cdot p(t_1)\right) p(t_1)}{\vertii{A q(0) - \left(A q(0) \cdot p(t_1)\right) p(t_1)}},
    \end{align}
\end{itemize}
\item if \circled{3} and \circled{4} are in contact:
$p(0) = \left(\cos\theta,\sin\theta,0\right)$ and $q(0) = \left(-\sin\theta\cos\varphi,\cos\theta\cos\varphi,\sin\varphi\right)$, and:
\begin{itemize}
    \item if $\varphi \in\ ]0,\pi/2[$, the next collision is of type \circled{1}-\circled{2}, and:
    \begin{align}
    p(t_1) &= \frac{1}{\sqrt{\cos^2\varphi + \cos^2\theta\sin^2\varphi}}\left(0,\cos\varphi,\cos\theta\sin\varphi\right),\\
    q(t_1) &= \frac{A q(0) - \left(A q(0) \cdot p(t_1)\right) p(t_1)}{\vertii{A q(0) - \left(A q(0) \cdot p(t_1)\right) p(t_1)}},
    \end{align}
    \item if $\varphi \in\ ]\pi/2,\pi[$, the next collision is of type \circled{2}-\circled{3}, and:
    \begin{align}
    p(t_1) &= \frac{1}{\sqrt{\cos^2\varphi + \sin^2\theta\sin^2\varphi}}\left(-\cos\varphi,0,\sin\theta\sin\varphi\right),\\
    q(t_1) &= \frac{B q(0) - \left(B q(0) \cdot p(t_1)\right) p(t_1)}{\vertii{B q(0) - \left(B q(0) \cdot p(t_1)\right) p(t_1)}} \cdotp
    \end{align}
\end{itemize}
\end{itemize}

\subsubsection{Spherical reduction mapping, vectorial version}

\noindent
We can also present the spherical reduction mapping \eqref{EQUATSpherReducMappi} using only cross products. In such a way, each iteration of the mapping is a trilinear mapping.\\
Let us expose the computations that we will use to justify the vectorial version of the spherical reduction mapping, in the particular case when \circled{1} and \circled{2} are initially in contact (the two other cases can be deduced by relabeling the particles and the collisions matrices appropriately).\\
\newline
If the initial position and velocity vectors write:
\begin{align}
p(0) = \begin{pmatrix} 0 \\ \cos\theta \\ \sin\theta \end{pmatrix}, \hspace{5mm} \text{and} \hspace{5mm} q(0) = \begin{pmatrix} \sin\varphi \\ -\sin\theta\cos\varphi \\ \cos\theta\cos\varphi \end{pmatrix},
\end{align}
for $\theta \in [0,\pi/2[$ and $\varphi \in\ ]0,\pi[$, we will \emph{define} the vector $u$, normal to the plane $\mathcal{P}(0) = \text{Span}(p(0),q(0))$ which characterizes the trajectory of the spherical reduction mapping, as:
\begin{align}
u(0) = p(0) \wedge q(0).
\end{align}
Let us emphasize that the order of the two terms in the previous cross product are important to define a self-consistent mapping. In such a way, we have:
\begin{align}
u(0) = \begin{pmatrix} \cos\varphi \\ \sin\theta\sin\varphi \\ -\cos\theta\sin\varphi \end{pmatrix},
\end{align}
so that in particular $e_y \cdot u(0) = \sin\theta\sin\varphi \geq 0$. In general, we \emph{define} the vector $u(k)$, normal to $\mathcal{P}(k)$, such that one has always $e_t \cdot u(k) \geq 0$, where $t = y$ if $k = 1$ (\circled{1} and \circled{2} in contact), $t = z$ if $k = 2$ (\circled{2} and \circled{3} in contact) and $t = x$ if $k = 3$ (\circled{3} and \circled{4} in contact).\\
\newline
Let us now describe how to recover the direction of the vector $u(1)$, normal to the plane $\mathcal{P}(1)$, characterizing the trajectory right after the collision that follows the initial configuration, characterized itself by the plane $\mathcal{P}(0)$.\\
By definition, the vector $u(1)$ will be colinear to, and with the same orientation as the vector:
\begin{align}
p(1) \wedge q(1).
\end{align}
On the one hand, $p(1)$ is deduced from $u(0)$ as follows. If the next collision involves \circled{2} and \circled{3}, then $\varphi \in\ ]0,\pi/2[$. In this case, the plane $\mathcal{P}(0)$ intersects the plane $y=0$ along a line that has the same direction as $p(1)$, and if we consider the vector $u(0) \wedge e_y$, we have:
\begin{align}
u(0) \wedge e_y = \begin{pmatrix} \cos\varphi \\ \sin\theta\sin\varphi \\ -\cos\theta\sin\varphi \end{pmatrix} \wedge \begin{pmatrix} 0 \\ 1 \\ 0 \end{pmatrix} = \begin{pmatrix} \cos\theta\sin\varphi \\ 0 \\ \cos\varphi \end{pmatrix},
\end{align}
so that $p(1)$ and $u(0) \wedge e_y$ are colinear, and have the same orientation, that is $p(1) \cdot ( u(0) \wedge e_y ) = \vertii{p(1)} \cdot \vertii{u(0) \wedge e_y} > 0$  (because all the coordinates that are non-zero of the latter vector are positive). Let us also observe that if the next collision involves instead \circled{3} and \circled{4}, then if we compute $u(0) \wedge e_z$ we find:
\begin{align}
u(0) \wedge e_z = \begin{pmatrix} \cos\varphi \\ \sin\theta\sin\varphi \\ -\cos\theta\sin\varphi \end{pmatrix} \wedge \begin{pmatrix} 0 \\ 0 \\ 1 \end{pmatrix} = \begin{pmatrix} \sin\theta\sin\varphi \\ -\cos\varphi \\ 0 \end{pmatrix},
\end{align}
which is again a vector colinear with $p(1)$ and with all its non-zero coordinates that are positive (because this time $\cos\varphi$ would be negative).\\
It remains to compute $q(1)$. In the case when the collision that follows the initial configuration is of type \circled{2}-\circled{3}, $q(1)$ would be orientated by $B q(0)$. For the vector $u(0)$ being given, and only this vector (that is, without knowing $p(0)$ and $q(0)$), we have to deduce $q(0)$. By definition, $q(0)$ is orientated by the line corresponding to the intersection between $\mathcal{P}(0)$ and the tangent vector to the unit sphere at the point $p(0)$. We have then first to determine $p(0)$ to determine $q(0)$. Let us observe that the vector $u(0) \wedge e_x$ will be colinear to $p(0)$, but since we have:
\begin{align}
u(0) \wedge e_x = \begin{pmatrix} \cos\varphi \\ \sin\theta\sin\varphi \\ -\cos\theta\sin\varphi \end{pmatrix} \wedge \begin{pmatrix} 1 \\ 0 \\ 0 \end{pmatrix} = \begin{pmatrix} 0 \\ -\cos\theta\sin\varphi \\ -\sin\theta\sin\varphi \end{pmatrix},
\end{align}
these two vectors do not have the same orientation. To recover $p(0)$ from $u(0)$, we should therefore consider instead $e_x \wedge u(0)$. From this point, we can also recover $q(0)$. Since we have:
\begin{align}
u(0) \wedge p(0) = \begin{pmatrix} \cos\varphi \\ \sin\theta\sin\varphi \\ -\cos\theta\sin\varphi \end{pmatrix} \wedge \begin{pmatrix} 0 \\ \cos\theta \\ \sin\theta \end{pmatrix} = \begin{pmatrix} \sin\varphi \\ -\sin\theta\cos\varphi \\ \cos\theta\cos\varphi \end{pmatrix} = q(0),
\end{align}
we can conclude. From $u(0)$, we define $u(1)$ as:
\begin{align}
\label{EQUATTriliSpherMappi}
u(1) = \frac{\widetilde{u}(1)}{\vertii{\widetilde{u}(1)}}, \text{   with:  } \widetilde{u}(1) = \underbrace{\left[ u(0) \wedge e_y \right]}_{\simeq p(1)} \wedge \underbrace{\left[ B \left( u(0) \wedge \left[ e_x \wedge u(0) \right] \right) \right]}_{\simeq q(1)\text{ when projected on }p(1)^\perp}
\end{align}
in the case when the next collision is of type \circled{2}-\circled{3} (the notation $\simeq p(1)$ means that we produced a vector colinear with $p(1)$ and with the same orientation, and as:
\begin{align}
u(1) = \frac{\widetilde{u}(1)}{\vertii{\widetilde{u}(1)}}, \text{   with:  } \widetilde{u}(1) = \underbrace{\left[ u(0) \wedge e_y \right]}_{\simeq p(1)} \wedge \underbrace{\left[ C \left( u(0) \wedge \left[ e_x \wedge u(0) \right] \right) \right]}_{\simeq q(1)\text{ when projected on }p(1)^\perp}
\end{align}
when the next collision is of type \circled{3}-\circled{4}.\\
We define $u(1)$ in a similar manner if the initial configuration is such that another pair of particles is in contact.

\begin{remark}
One has to be careful with the trilinear mapping \eqref{EQUATTriliSpherMappi}! The order of the different parentheses is of crucial importance, because the cross product is \emph{not} associative.
\end{remark}

\subsection{Impossible collision sequences for the spherical reduction mapping}

\noindent
Let us conclude the theoretical discussion concerning the spherical reduction mapping with studying the possible collision orders.\\
In the case of the original four-particle system, it is clear that when a collision of type $\mathfrak{a}$ and another collision of type $\mathfrak{c}$ take place consecutively, it is not possible that such a pair is followed immediately by another collision of type $\mathfrak{a}$ or $\mathfrak{c}$, because the pairs of particles \circled{1}-\circled{2} and \circled{3}-\circled{4} are separating.\\
In the case of the spherical reduction mapping, such a result is not clear a priori, because any trajectory presents infinitely many collisions. Indeed, it might be possible that for trajectories of the spherical reduction mapping that are associated to trajectories for the original four-particle system such that the particles eventually separate, such trajectories present collision sequences containing $\mathfrak{aca}$ or $\mathfrak{cac}$.\\
We will see that it is never the case. To this regard, the collision patterns of trajectories for the spherical reduction mapping are not essentially different from the trajectories of the original four-particle system.

\begin{proposition}[Impossibility of the sequences $\mathfrak{aca}$ and $\mathfrak{cac}$ for the spherical reduction mapping]
\label{PROPOImposSeqACA_CAC}
Let us consider a trajectory $\left( \left(c_k,\mathcal{P}(k)\right)\right)_k$ of the spherical reduction mapping \eqref{EQUATSpherReducMappi}.\\
Then, if there exists one collision index $k \in \mathbb{N}^*$ such that $c_k = \mathfrak{a}$ and $c_{k+1} = \mathfrak{c}$, necessarily $c_{k+2} \neq \mathfrak{a}$.\\
Similarly, if there exists one collision index $k \in \mathbb{N}^*$ such that $c_k = \mathfrak{c}$ and $c_{k+1} = \mathfrak{a}$, necessarily $c_{k+2} \neq \mathfrak{c}$.
\end{proposition}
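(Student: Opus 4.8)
The plan is to reduce the whole statement to a single assertion — \emph{if $c_k=\mathfrak a$ and $c_{k+1}=\mathfrak c$ then $c_{k+2}\neq\mathfrak a$} — and then deduce the twin assertion about $\mathfrak{cac}$ from the left--right reflection symmetry of the four-particle system. This reflection relabels particle \circled{$i$} into \circled{$5-i$}, acts on the relative coordinates by the permutation $J\colon(x,y,z)\mapsto(z,y,x)$, and satisfies $JAJ=C$, $JBJ=B$, $JCJ=A$; hence it is compatible with the spherical reduction mapping \eqref{EQUATSpherReducMappi} and exchanges the labels $\mathfrak a\leftrightarrow\mathfrak c$ while fixing $\mathfrak b$, so ``no $\mathfrak{aca}$'' and ``no $\mathfrak{cac}$'' are equivalent. (Alternatively, the computation below can be run verbatim in the $\mathfrak{ca}$-frame with the roles of $A$ and $C$ swapped.)

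For the main assertion I would argue exactly in the spirit of the proof of Theorem~\ref{THEORSpherical_Reduc}. Since $c_k=\mathfrak a$, particles \circled{1}--\circled{2} are in contact at step $k$; using Theorem~\ref{THEORSpherical_Reduc} I may pick any representative of $\mathcal P(k)$ and, as in Section~\ref{SSectSpherReducTrigo}, write $p(k)=(0,\cos\theta,\sin\theta)$ with $\theta\in\ ]0,\pi/2[$ and, for the normalized tangential part of the velocity, $q(k)=(\sin\varphi,-\sin\theta\cos\varphi,\cos\theta\cos\varphi)$ with $\varphi\in\ ]0,\pi[$ — all these strict inclusions holding because the orbit of the spherical reduction never meets a corner of $\mathcal O_{\mathbb S^2}$ (cf.\ Remark~\ref{REMARInfinitely_ManyColli}). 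The hypothesis $c_{k+1}=\mathfrak c$ forces $\cos\varphi<0$, by the same comparison of the candidate collision times $t_{\mathfrak b}$ and $t_{\mathfrak c}$ that is carried out in \eqref{EQUATCompaColliTimesAfterColl_a}. After the collision $\mathfrak c$ one obtains, picking the representative in the first octant, $p(k+1)\propto(\sin\theta\sin\varphi,-\cos\varphi,0)$ (third component zero, consistent with \circled{3}--\circled{4} being in contact), and $q(k+1)$ equals the normalized projection of $Cq(k)$ onto $p(k+1)^\perp$.

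The heart of the proof is then to read off $c_{k+2}$. I would put the configuration ``\circled{3}--\circled{4} in contact'' into the canonical form of Section~\ref{SSectSpherReducTrigo}: $p(k+1)=(\cos\theta_1,\sin\theta_1,0)$ with $\cos\theta_1,\sin\theta_1>0$ (they are proportional to $\sin\theta\sin\varphi$ and $-\cos\varphi$ with the same positive constant), and $q(k+1)=(-\sin\theta_1\cos\varphi_1,\cos\theta_1\cos\varphi_1,\sin\varphi_1)$ with $\varphi_1\in\ ]0,\pi[$ (the third component of $Cq(k)$ equals $-r\cos\varphi\cos\theta>0$, so the projection does not vanish and $\sin\varphi_1>0$). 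By Section~\ref{SSectSpherReducTrigo}, $c_{k+2}=\mathfrak a$ would force $\varphi_1\in\ ]0,\pi/2[$, i.e.\ $\cos\varphi_1>0$; and $\cos\varphi_1=q(k+1)\cdot e_{\theta_1}$ with $e_{\theta_1}=(-\sin\theta_1,\cos\theta_1,0)\propto(\cos\varphi,\sin\theta\sin\varphi,0)$. Since $e_{\theta_1}\perp p(k+1)$, this dot product has the same sign as $(Cq(k))\cdot e_{\theta_1}$, and the one computation I actually have to carry out gives
\[
(Cq(k))\cdot e_{\theta_1}\ \propto\ \cos\varphi\,\sin\varphi\,\cos\theta\,\Bigl(\cos\theta+\tfrac{1+r}{2}\sin\theta\Bigr)\ <\ 0 ,
\]
every factor except $\cos\varphi$ being positive. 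Hence $\cos\varphi_1<0$, so $\varphi_1\in\ ]\pi/2,\pi[$, and the collision following step $k+1$ is of type \circled{2}--\circled{3}, that is $c_{k+2}=\mathfrak b\neq\mathfrak a$.

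I do not expect any conceptual obstacle: the argument is a localized, one-period version of the computation already performed for Theorem~\ref{THEORSpherical_Reduc}. The only points that will need care are bookkeeping ones — keeping the orientations of $p(k+1)$ and of the tangent vector $e_{\theta_1}$ consistent with the sign conventions of the spherical reduction, so that the sign of the decisive dot product is unambiguous, and checking that the degenerate parameter values ($\theta$, $\varphi$, $\theta_1$ or $\varphi_1$ equal to $0$, $\pi/2$ or $\pi$) correspond precisely to the orbit meeting a corner of $\mathcal O_{\mathbb S^2}$ and are thus ruled out by hypothesis.
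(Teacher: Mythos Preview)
Your proposal is correct and follows essentially the same route as the paper: both arguments set up the trigonometric representation of Section~\ref{SSectSpherReducTrigo} after the collision $\mathfrak a$, use $\cos\varphi<0$ from the hypothesis $c_{k+1}=\mathfrak c$, and then determine the sign of $\cos\varphi_1$ (the paper calls it $\cos\varphi'$) to conclude that the next collision must be of type $\mathfrak b$. Your computation is in fact slightly cleaner---you dot $Cq(k)$ directly with $e_{\theta_1}$ rather than writing out the full projected vector---but the idea and the decisive sign check are the same.
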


\begin{proof}
We will prove only the first statement, since the proof of the second one follows by symmetry. We will use the trigonometric version of the representation of the spherical reduction mapping \eqref{EQUATSpherReducMappi}.\\
Let us assume that $c_k = \mathfrak{a}$. Right after such a collision, the position and velocity vectors of the system write respectively:
\begin{align}
p(k) = \left(0,\cos\theta,\sin\theta\right) \hspace{3mm} \text{and} \hspace{3mm} q(k) = \left(\sin\varphi,-\sin\theta\cos\varphi,\cos\theta\cos\varphi\right).
\end{align}
By assumption, the collision that follows is of type $\mathfrak{c}$. In particular, we have $\varphi \in\ ]\pi/2,\pi[$. We can also deduce the next position and velocity vectors: $p(k+1)$ is colinear to:
\begin{align}
\left(\sin\theta\sin\varphi,-\cos\varphi,0\right),
\end{align}
while $q(k+1)$ equal to the following vector, up to be multiplied by a positive scalar:
\begin{align}
Cq(k) - \left( Cq(k) \cdot p(k) \right) p(k)
\end{align}
with
\begin{align}
Cq(k) = \begin{pmatrix} 1 & 0 & 0 \\ 0 & 1 & (1+r)/2 \\ 0 & 0 & -r \end{pmatrix} \begin{pmatrix} \sin\varphi \\ -\sin\theta\cos\varphi \\ \cos\theta\cos\varphi \end{pmatrix} = \begin{pmatrix} \sin\varphi \\ -\sin\theta\cos\varphi + \frac{(1+r)}{2}\cos\theta\cos\varphi \\ -r\cos\theta\cos\varphi \end{pmatrix},
\end{align}
so that
\begin{align}
Cq(k) - \big( Cq(k) \cdot& p(k) \big) p(k)\nonumber\\
&\hspace{-10mm}= \begin{pmatrix} \sin\varphi \\ -\sin\theta\cos\varphi + \frac{(1+r)}{2}\cos\theta\cos\varphi \\ -r\cos\theta\cos\varphi \end{pmatrix} \nonumber\\
&\hspace{5mm} - \left( -\cos\theta\sin\theta\cos\varphi + \frac{(1+r)}{2}\cos^2\theta\cos\varphi - r\cos\theta\sin\theta\cos\varphi \right) \begin{pmatrix} 0 \\ \cos\theta \\ \sin\theta \end{pmatrix} \nonumber\\
&\hspace{-10mm}= \begin{pmatrix}
\sin\varphi \\
-\sin\theta\cos\varphi + \frac{(1+r)}{2}\cos\theta\cos\varphi + (1+r) \cos^2\theta\sin\theta \cos\varphi - \frac{(1+r)}{2}\cos^3\theta\cos\varphi \\
-r\cos\theta\cos\varphi + (1+r) \cos\theta\sin^2\theta\cos\varphi - \frac{(1+r)}{2} \cos^2\theta\sin\theta\cos\varphi
\end{pmatrix}.
\end{align}
Keeping in mind that $q(k+1)$ is a vector of the form $(-\sin\theta'\cos\varphi',\cos\theta'\cos\varphi',\sin\varphi')$, we can determine the sign of $\cos\varphi'$. Indeed, $\sin\varphi$, which is positive (because by definition $\varphi \in\ ]0,\pi[$), has the same sign as $-\sin\theta'\cos\varphi'$. Therefore, since $\theta \in\ ]0,\pi/2[$, we deduce that $\cos\varphi' < 0$.\\
In other words, $\varphi' \in\ ]\pi/2,\pi[$, so that the $(k+2)$-th collision is necessarily of type $\mathfrak{b}$. This concludes the proof of Proposition \ref{PROPOImposSeqACA_CAC}.
\end{proof}

\begin{corollary}
\label{COROLDescrColliPatte}
For any trajectory $\left( \left(c_k,\mathcal{P}(k)\right)\right)_k$ of the spherical reduction mapping \eqref{EQUATSpherReducMappi}, the number of collisions of type $\mathfrak{b}$ is infinite, that is we have:
\begin{align}
\left\vert \left\{ k \in \mathbb{N}^* / \/c_k = \mathfrak{b} \right\} \right\vert = +\infty,
\end{align}
and no more than two collisions that are not of type $\mathfrak{b}$ can take place between two consecutive collisions of type $\mathfrak{b}$, that is:
\begin{align}
\text{if} \hspace{3mm} c_k = \mathfrak{b},\hspace{3mm} \text{then} \hspace{3mm} c_{k+2} = \mathfrak{b} \hspace{3mm} \text{or} \hspace{3mm} c_{k+3} = \mathfrak{b}.
\end{align}
\end{corollary}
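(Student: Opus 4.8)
The plan is to reduce everything to two facts already available: the elementary transition rules of the spherical reduction mapping, and the impossibility of the patterns $\mathfrak{aca}$ and $\mathfrak{cac}$ (Proposition \ref{PROPOImposSeqACA_CAC}). First I would record the transition rules, which can be read off directly from the trigonometric version of the mapping in Section \ref{SSectSpherReducTrigo} (equivalently, from the sign analysis of the competing collision times carried out in the proof of Theorem \ref{THEORSpherical_Reduc}): after a collision of type $\mathfrak{a}$ the next collision is of type $\mathfrak{b}$ or $\mathfrak{c}$; after one of type $\mathfrak{b}$ it is of type $\mathfrak{a}$ or $\mathfrak{c}$; after one of type $\mathfrak{c}$ it is of type $\mathfrak{a}$ or $\mathfrak{b}$. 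In particular no two consecutive collisions have the same type, and the block $\mathfrak{b}\mathfrak{b}$ never occurs.

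For the first assertion I would argue by contradiction: if only finitely many collisions were of type $\mathfrak{b}$, there would be an index $k_0$ from which all collisions belong to $\{\mathfrak{a},\mathfrak{c}\}$. Since every orbit of the spherical reduction mapping involves infinitely many collisions (Remark \ref{REMARInfinitely_ManyColli}) and no two consecutive collisions agree, the tail $(c_k)_{k\geq k_0}$ would have to alternate strictly between $\mathfrak{a}$ and $\mathfrak{c}$, hence contain $\mathfrak{aca}$ or $\mathfrak{cac}$, contradicting Proposition \ref{PROPOImposSeqACA_CAC}. Therefore infinitely many collisions are of type $\mathfrak{b}$.

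For the second assertion I would do a short case split. Given $c_k = \mathfrak{b}$, the transition rule gives $c_{k+1}\in\{\mathfrak{a},\mathfrak{c}\}$. If $c_{k+1}=\mathfrak{a}$, then $c_{k+2}\in\{\mathfrak{b},\mathfrak{c}\}$; in the first subcase we are done, and in the second subcase the block $c_{k+1}c_{k+2}=\mathfrak{a}\mathfrak{c}$ forces $c_{k+3}\neq\mathfrak{a}$ by Proposition \ref{PROPOImposSeqACA_CAC}, while the transition rule after a collision of type $\mathfrak{c}$ gives $c_{k+3}\in\{\mathfrak{a},\mathfrak{b}\}$, so $c_{k+3}=\mathfrak{b}$. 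The case $c_{k+1}=\mathfrak{c}$ is handled symmetrically, using the second half of Proposition \ref{PROPOImposSeqACA_CAC}: $c_{k+2}\in\{\mathfrak{a},\mathfrak{b}\}$, and if $c_{k+2}=\mathfrak{a}$ then the block $c_{k+1}c_{k+2}=\mathfrak{c}\mathfrak{a}$ forces $c_{k+3}\neq\mathfrak{c}$, so $c_{k+3}=\mathfrak{b}$. In all cases $c_{k+2}=\mathfrak{b}$ or $c_{k+3}=\mathfrak{b}$, which is the claim.

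I do not expect a genuine obstacle here: the argument is purely combinatorial once the transition rules and Proposition \ref{PROPOImposSeqACA_CAC} are granted. The only point deserving care is the standing non-degeneracy assumption that the orbit never reaches a corner of $\mathcal{O}_{\mathbb{S}^2}$ (no triple or over-determined contact), which is precisely what guarantees that a next collision always exists and that the transition rules apply at every step; this is built into the very definition of the spherical reduction mapping, so it costs nothing to invoke.
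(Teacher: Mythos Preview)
Your proposal is correct and is exactly the argument the paper intends: the corollary is stated without proof precisely because it follows immediately from Proposition~\ref{PROPOImposSeqACA_CAC} together with the elementary transition rules (no repeated letter), and your write-up simply spells this out. There is nothing to add or change.
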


\begin{remark}
As a consequence of Corollary \ref{COROLDescrColliPatte}, we deduce that the collision pattern $c_1 c_2 \dots$ of any trajectory $\left( \left(c_k,\mathcal{P}(k)\right)\right)_k$ of the spherical reduction is obtained by the concatenation of only four elementary ``blocks'' of collisions, namely $\mathfrak{ab}$, $\mathfrak{cb}$, $\mathfrak{acb}$ and $\mathfrak{cab}$. 
\end{remark}

\section{Numerical simulations of the spherical reduction}

\noindent
In this section, we will describe numerical simulations of the spherical reduction mapping.\\
The spherical reduction \eqref{EQUATSpherReducMappi} enables to represent the configurations of the particle system during a collision with the help of a single three-dimensional unit vector (the normal vector to the tangent plane $\mathcal{P}(k)$), and an integer between $1$ and $3$ (to describe which pair of particles is involved in the collision). In particular, in Section \ref{SSectSpherReducTrigo}, we saw that it is possible to represent the position and velocity vectors with, respectively, a single real variable, corresponding to the angles $\theta$ and $\varphi$.\\
As a consequence, this representation enables graphical visualizations of the trajectories, contrary to the situation in which one represents the positions and velocity vectors as general elements of $\mathbb{R}^3$.

\subsection{General approach for the numerical simulations}

\noindent
We will use the trigonometric representation of the spherical reduction mapping \eqref{EQUATSpherReducMappi}, described in Section \ref{SSectSpherReducTrigo}. In order to compare similar configurations of the particle system, we will represent only collisions of type $\mathfrak{b}$, when the pair \circled{2}-\circled{3} is in contact. Such a choice is natural, since when an inelastic collapse of four particles takes place, the collisions of type $\mathfrak{b}$ necessarily take place an infinite number of times. This is of course true for the original system of four particles, and we have proven it is also the case for the spherical reduction mapping (see Corollary \ref{COROLDescrColliPatte}).\\
All the trajectories of the spherical reduction mapping represented below were computed until the $10^4$-th collision. Let us observe once again that this is possible to compute an arbitrary number of collisions for any trajectory, see Remark \ref{REMARInfinitely_ManyColli} (contrary to the original particle system, since the physical particles can end up colliding after a certain time, and disperse, evolving eventually according to the free transport).\\
For the trajectories that the program can compute successfully until the $10^4$-th collision, we store and represent only the last $5\times 10^2$ collisions of type $\mathfrak{b}$, in order to focus on the long-time behaviour of the trajectories. More precisely, our objective is to represent the $\omega$-limit sets of the trajectories of \eqref{EQUATSpherReducMappi}.\\
Note that, even though in theory the number of collisions for any trajectory of the spherical reduction mapping is infinite, we introduced in the program a break function, stopping the computation of a trajectory when some numerical singularity is reached. Namely, it might be that the difference between two collision times becomes smaller than the computer precision. Another type of numerical singularity corresponds to the impossibility to determine the next collision (the angle $\varphi$ of the representation of Section \ref{SSectSpherReducTrigo} may be such that $\vert \cos\varphi \vert$ is smaller than the computer precision, preventing to evaluate safely the sign of this quantity). We excluded these cases using break functions.\\
The simulations presented below are representations of the trajectories in the $(r,\varphi)$ plane. In summary, for different choices of the restitution coefficient $r$, represented on the horizontal axis, we plotted clouds of points $(r,\varphi_n)_{1 \leq n \leq 500}$, corresponding to the last $500$ collisions of type $\mathfrak{b}$ of trajectories, that is, we plotted on the vertical axis the angles $\varphi_n$, corresponding to the velocity vectors $q(\mathfrak{b}_n) = \left(\cos\theta_n\cos\varphi_n,\sin\varphi_n,-\sin\theta_n\cos\varphi_n\right)$.\\
Note that representing the angle $\theta$ instead of $\varphi$ provides very similar results. For this reason, we present here only projections of the trajectories in the $(r,\varphi)$ plane.\\
Finally, the main difference between the following simulations is the number of trajectories computed for a fixed restitution coefficient $r$: in general we represent $8$ different trajectories, but we will focus also in some cases on one single trajectory.

\subsection{First simulation: representation for all possible restitution coefficients $r$}

\noindent
Since the spherical reduction mapping enables to consider arbitrarily large number of collisions, independently from the restitution coefficient $r$, we can represent trajectories corresponding to the ``physical'' ranges of $r$ already studied in \cite{CDKK999} and \cite{BeCa999}, but we can also reach collapses taking place in more general ranges, that have no chance to be caught considering the original particle system (as a consequence of Theorem 1 page 460 in \cite{BeCa999}). Figure \ref{fig:Simul_1} is a projection on the $(r,\varphi)$ plane of trajectories of the spherical reduction mapping \eqref{EQUATSpherReducMappi}, for $0.072 \leq r \leq 0.999$.\\

\begin{figure}
\centering
    \includegraphics[trim = 0cm 1.5cm 0cm 2cm, width=1\linewidth]{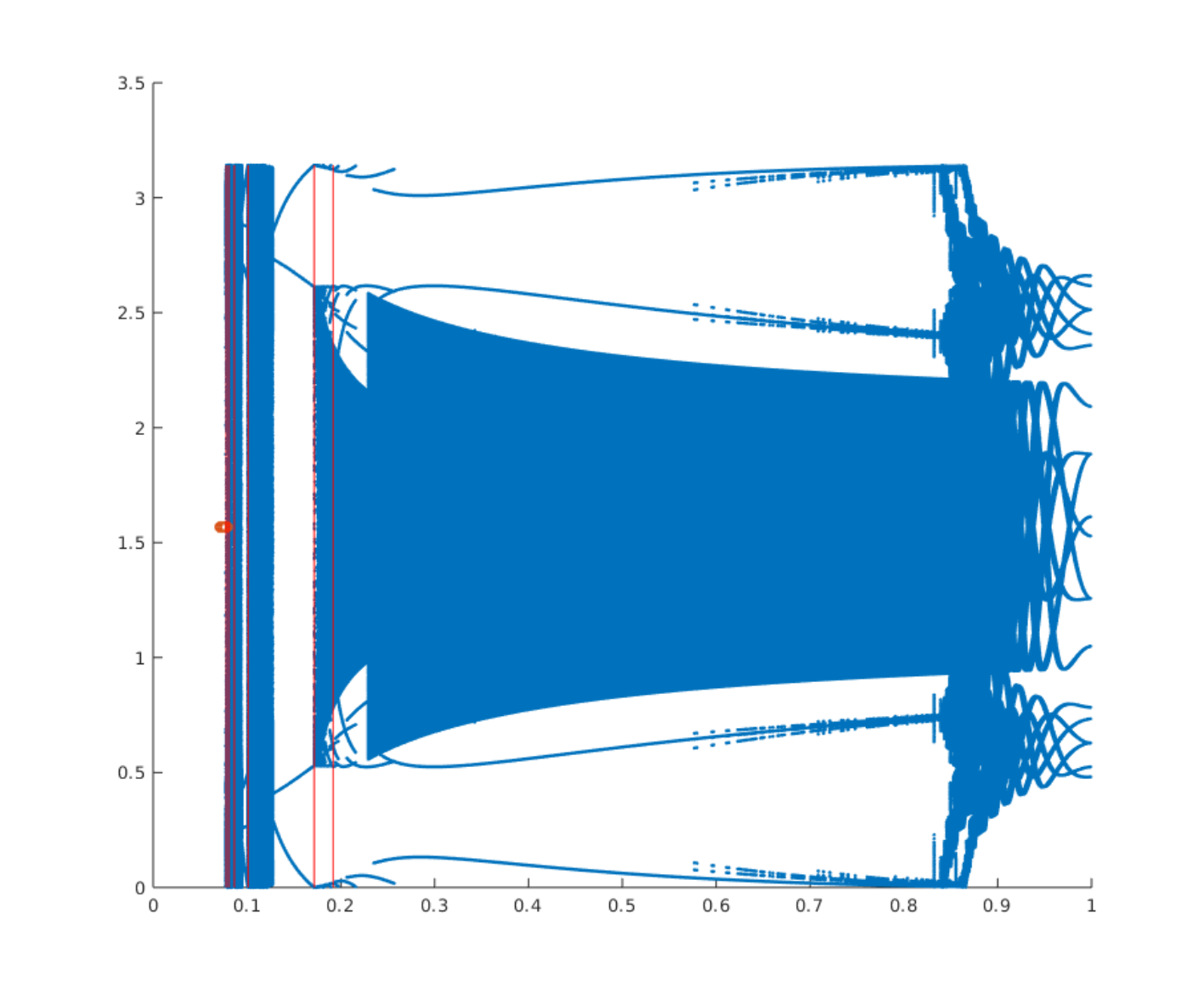}
    \caption{Representation in the $(r,\varphi)$ plane of the last $500$ collisions of type $\mathfrak{b}$ of $8$ trajectories presenting at least $10^4$ collisions, plotted for each restitution coefficient $0.072 \leq r \leq 0.999$. The step $\Delta_r$ between two restitution coefficients is $\Delta_r = 2.5\times 10^{-4}$ ($3709$ different values of $r$). For each restitution coefficient $r$, the initial configurations $(\theta_0,\varphi_0)$ of the trajectories are chosen on a regular grid, with $2$ values of $\theta_0$, and $4$ values of $\varphi_0$.}
    \label{fig:Simul_1}
\end{figure}

\noindent
Let us observe on Figure \ref{fig:Simul_1} that, on the one hand, for most of the values of the restitution coefficient $r$ (for approximately $r \leq 0.9$), the orbits of the trajectories seems to fill in a dense manner intervals of the vertical segment $[0,\pi]$ in which lie the angles $\varphi_n$. This could suggest a chaotic behaviour of the dynamical system \eqref{EQUATSpherReducMappi}.\\
On the other hand, for $r$ small enough ($r \leq 0.17$), there exist intervals of value of the restitution coefficient $r$ for which the orbits of the dynamical system accumulate around a finite number of points. This corresponds to the periodic patterns already observed and described in \cite{CDKK999}. We will discuss this point further using Figure \ref{fig:Simul_2} and its magnifications in the following section.\\
Therefore, depending on the restitution coefficient $r$, we observe windows of stability, separated by windows of apparent chaos (as already observed in \cite{CDKK999}). Let us note that this behaviour has strong similarity with what is known about the logistic map, and its bifurcation diagram (see for instance \cite{Draz005}).
\newline
\newline
The vertical red lines represented correspond to the critical values of $r$ determined in \cite{CDKK999}. On the left hand side, a short horizontal segment appears around the point of coordinates $(0.08,\pi/2)$. It is the collection of the points plotted when the program was unable to compute successfully the trajectories until their $10^4$-th collision. For a given restitution coefficient $r$, each time the break function of the program was activated and a trajectory was not computed until the end, a red dot was plotted at $(r,\pi/2)$.\\
These activations of the break function are accumulated close to the region $r \simeq 0.072$. We will describe in the next section a possible interpretation of the onset of these numerical singularities, linked with the windows of stability.\\
\newline
In the opposite case, for $r$ close to $1$, the trajectories seem to accumulate around a finite number of values of $\varphi$. As $r$ changes, these values describe curves, that oscillate faster and faster as $r$ is decreasing.\\
In addition, the $\omega$-limit sets for $r$ very close to $1$ seem to be discrete sets. But as $r$ is decreasing, the orbits seem to cover small intervals instead of points, in such a way that the $\omega$-limit sets seem to grow, and do not seem to be discrete anymore, as it can be seen for $r \simeq 0.92$.\\
Therefore, this behaviour suggests that the orbits can be quasi-periodic. In addition, in the regions for which the values of $\varphi$ seem to cover a large portion of the interval $[0,\pi]$, it is actually possible that each orbit does not cover the whole interval. On the one hand, it is possible that each orbit covers only a finite collection of sub-intervals. In this way, the $\omega$-limit set of a single trajectory might be not dense in general, although not discrete in general neither. On the other hand, the union of the $\omega$-limit sets, for all the orbits, can be dense, in agreement with what we observe in the windows of apparent chaos.

\subsection{Second simulation: representation for $r$ in the region where collapse takes place for the original particle system}

\noindent
For this second simulation, we focus on the ``physical'' cases. More precisely, it is known (\cite{BeCa999} and \cite{CDKK999}) that inelastic collapse of four particles takes place for a set of initial data of positive measure, for the original system, for $r$ smaller than $r_\text{stabi.} = 3-2\sqrt{2} \simeq 0.17157$. In addition, in \cite{CDKK999} the authors constructed unstable self-similar collapses for $r$ until $r_\text{exist.} \simeq 0.19166$, where $r_\text{exist.}$ is the only real root in $[0,1]$ of the polynomial:
\begin{align}
P(r) = 17r^6 - 138r^5 + 831r^4 - 3148r^3 + 831r^2 - 138r + 17.
\end{align}
As for the lower bound, we will stay away from the region for which the three-particle collapse can take place. Such a collapse can happen only if $r \leq r_\text{crit.} = 7-4\sqrt{3} \simeq 0.07180$.\\
The second simulation is then performed for $0.075 \leq r \leq 0.192$, and is illustrated in Figure \ref{fig:Simul_2}.

\begin{figure}
\centering
    \includegraphics[trim = 0cm 1.5cm 0cm 0cm, width=1\linewidth]{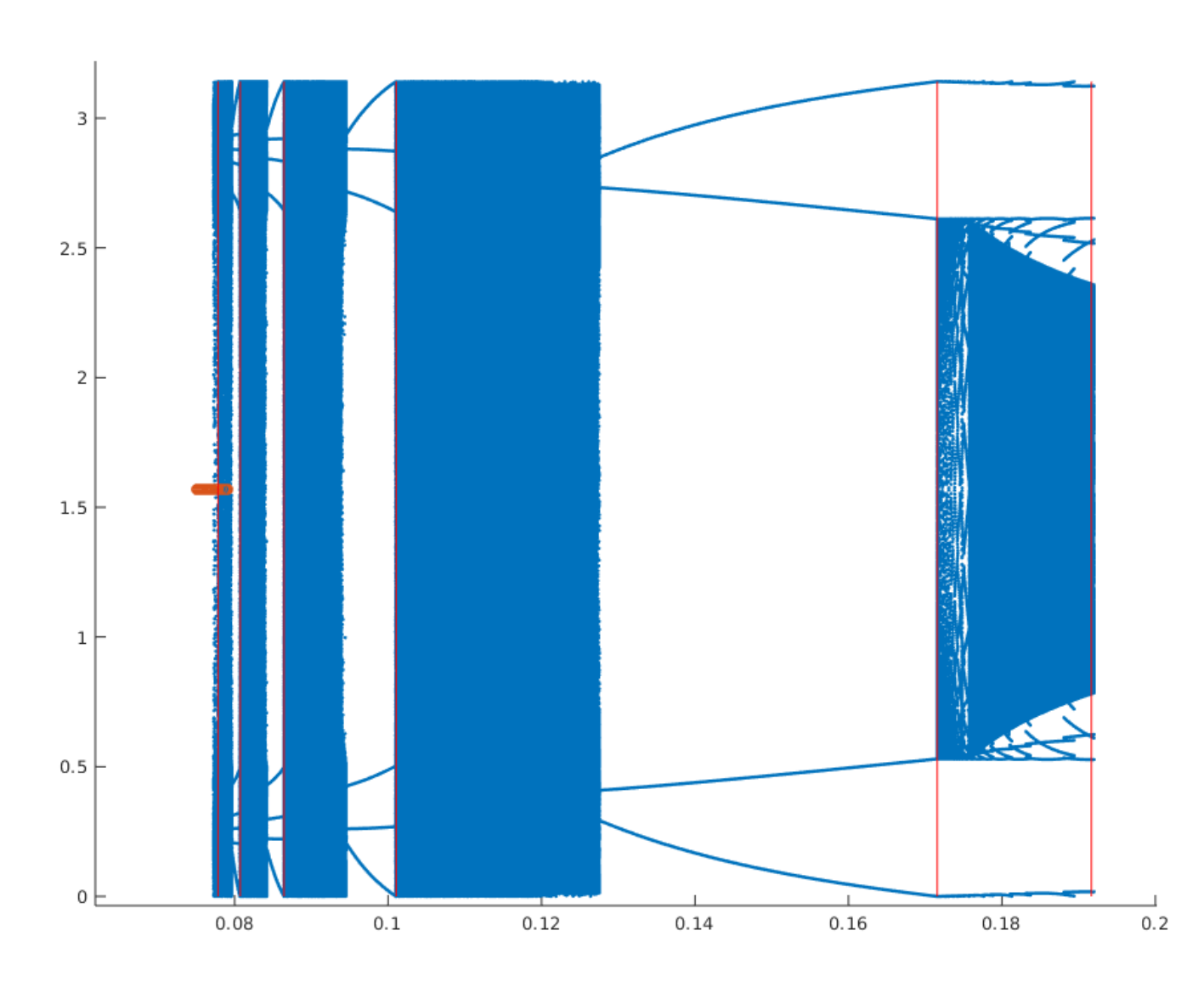}
    \caption{Representation in the $(r,\varphi)$ plane of the last $500$ collisions of type $\mathfrak{b}$ of $8$ trajectories presenting at least $10^4$ collisions, plotted for each restitution coefficient $0.075 \leq r \leq 0.192$. The step $\Delta_r$ between two restitution coefficients is $\Delta_r = 2.5\times 10^{-5}$ ($4681$ different values of $r$). For each restitution coefficient $r$, the initial configurations $(\theta_0,\varphi_0)$ of the trajectories are chosen on a regular grid, with $2$ values of $\theta_0$, and $4$ values of $\varphi_0$.}
    \label{fig:Simul_2}
\end{figure}
\noindent
We observe clearly the different windows of stability of Cipra et al. (\cite{CDKK999}). The upper values of $r$ that bound these windows are represented by the vertical red lines on Figure \ref{fig:Simul_2}. From the right to the left, these lines correspond respectively to the restitution coefficients:
\begin{align}
r_{\text{max},(\mathfrak{ab})^2(\mathfrak{cb})^2} &= 3-2\sqrt{2} \simeq 0.17157,\\
r_{\text{max},(\mathfrak{ab})^3(\mathfrak{cb})^3} &= 5-2\sqrt{6} \simeq 0.10102,\\
r_{\text{max},(\mathfrak{ab})^4(\mathfrak{cb})^4} &= 3 + 2\sqrt{2} -2\sqrt{4 + 3\sqrt{2}} \simeq 0.08643,\\
r_{\text{max},(\mathfrak{ab})^5(\mathfrak{cb})^5} &= 4 + \sqrt{5} -2\sqrt{5 + 2\sqrt{5}} \simeq 0.08070,\\
r_{\text{max},(\mathfrak{ab})^6(\mathfrak{cb})^6} &= 3 + 2\sqrt{3} - 2\sqrt{5 + 3\sqrt{3}} \simeq 0.07782.
\end{align}
The reader may consult in particular the Table 1 page 198 in \cite{CDKK999}, and Section 7 ``General $4n$ bounce pattern'' in the same reference for more details on these values.\\
\newline
The largest stability window, between $r \simeq 0.12754$ and $r \simeq 0.17157$, corresponds to the regime for which the self-similar collapse associated to the pattern $(\mathfrak{ab})^2(\mathfrak{cb})^2$ is stable. For $r$ fixed in such a window, the orbits accumulate all around $4$ values of $\varphi$, which correspond to the four different collisions of type $\mathfrak{b}$ in the pattern $(\mathfrak{ab})^2(\mathfrak{cb})^2$.\\
Let us observe that there is no other point around which the orbits accumulate, suggesting that there is no other stable periodic pattern that is stable in such a window. To investigate more precisely this question, it would be necessary to repeat the simulation, with much more than $8$ initial configurations of the particle system.\\
In the other windows, we observe a similar behaviour. The reader may in particular refer to Figure \ref{fig:Simul_2_zoom_1}, which is a magnification of the simulation, focused on the leftmost windows of stability. For the second window ($0.09452 \leq r \leq 0.10102$), the orbits accumulates around $6$ points, corresponding to the $6$ different collisions of type $\mathfrak{b}$ of the pattern $(\mathfrak{ab})^3(\mathfrak{cb})^3$. For the third and fourth windows, we observe respectively $8$ and $10$ points.\\
\newline
For $r$ below $0.078$, the program is unable to detect the patterns $(\mathfrak{ab})^n(\mathfrak{cb})^n$ for $n \geq 6$. However, the mathematical construction of Cipra et al. should be feasible for any value of $n$ (although more and more intricate, technically speaking), and so we believe that the stability windows exist for any $n \geq 2$, accumulating in the region above $r = 7-4\sqrt{3} \simeq 0.07180$.\\
If this scenario is true, we see that for large $n$, the $2n$ first collisions of type $\mathfrak{a}$ and $\mathfrak{b}$ induce a strong dissipation of the relative velocities between the particles \circled{1} and \circled{2}, and \circled{2} and \circled{3}, while the last relative velocity between \circled{3} and \circled{4} remains untouched. This strong unbalance between the relative velocities after half a period would imply that the vector $q$ of the relative velocities of the trajectories converges towards the dominant eigenvector of the matrix $J(BA)^n$ (using the notations of \cite{CDKK999}). Such an eigenvector, when normalized, has at least one extremely small component. If this component is smaller in absolute value than the computer precision, our program would not be able to compute for a large number of collisions such a trajectory.\\
Let us finally observe that the critical value $r_\text{crit.} = 7 - 4\sqrt{3}$ corresponds to the largest restitution coefficient for which the three-particle collapse takes place. This value was determined in \cite{McYo991} and \cite{CoGM995}. In this case, the collapse of three particles corresponds to the infinite repetition of the pattern $\mathfrak{ab}$, so that the patterns $(\mathfrak{ab})^n(\mathfrak{cb})^n$, for $n$ large, can be seen as a perturbation of the three-particle collapse.

\begin{figure}
\centering
\begin{subfigure}{0.45\textwidth}
    \includegraphics[trim = 0.5cm 1.5cm 0.5cm 0cm, width=\linewidth]{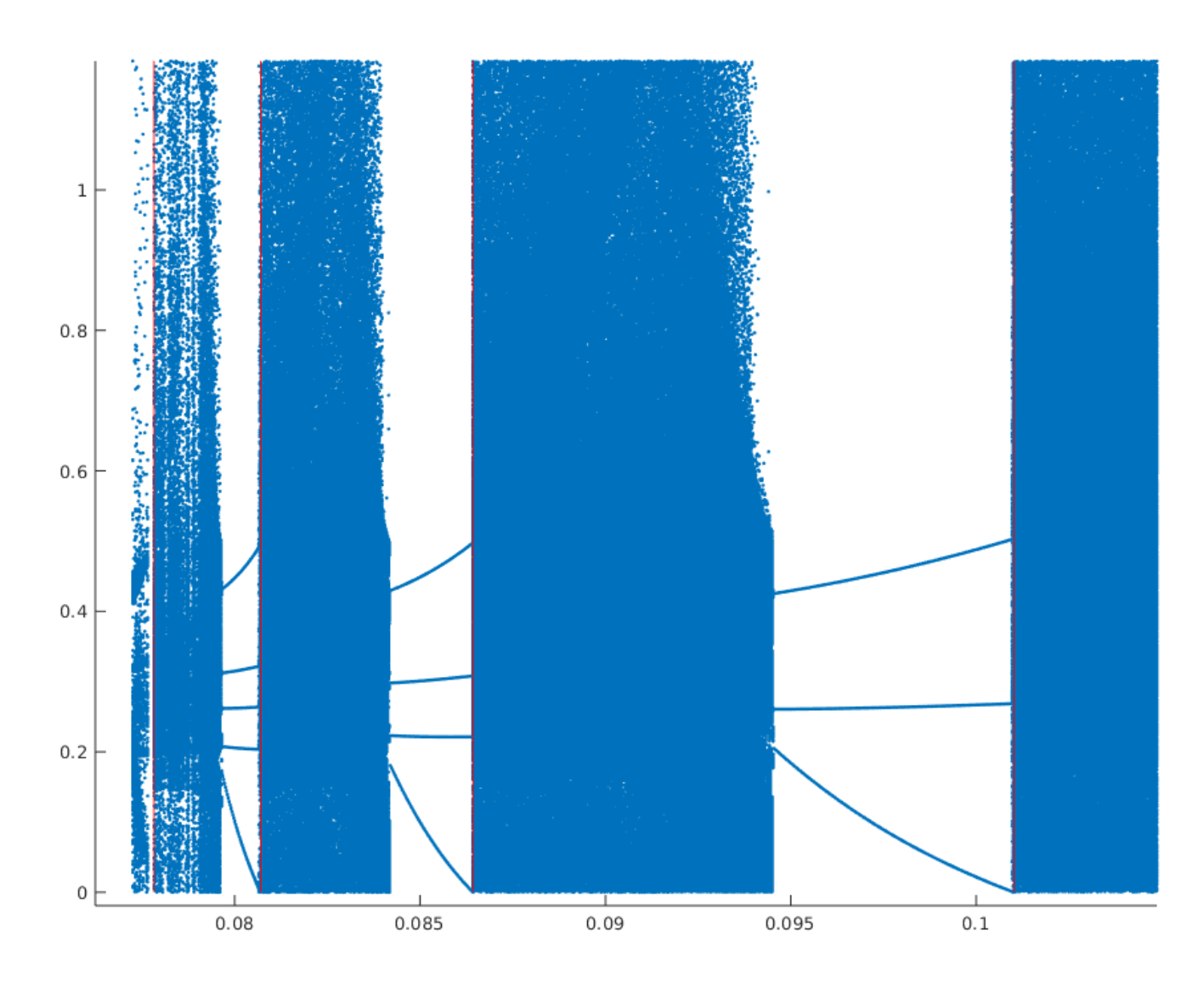}
    \caption{Magnification of Figure \ref{fig:Simul_2}, for $0.075 \leq r \leq 0.105$.}
    \label{fig:Simul_2_zoom_1}
\end{subfigure}
\hfill
\begin{subfigure}{0.45\textwidth}
    \includegraphics[trim = 0.5cm 1.5cm 0.5cm 0cm, width=\linewidth]{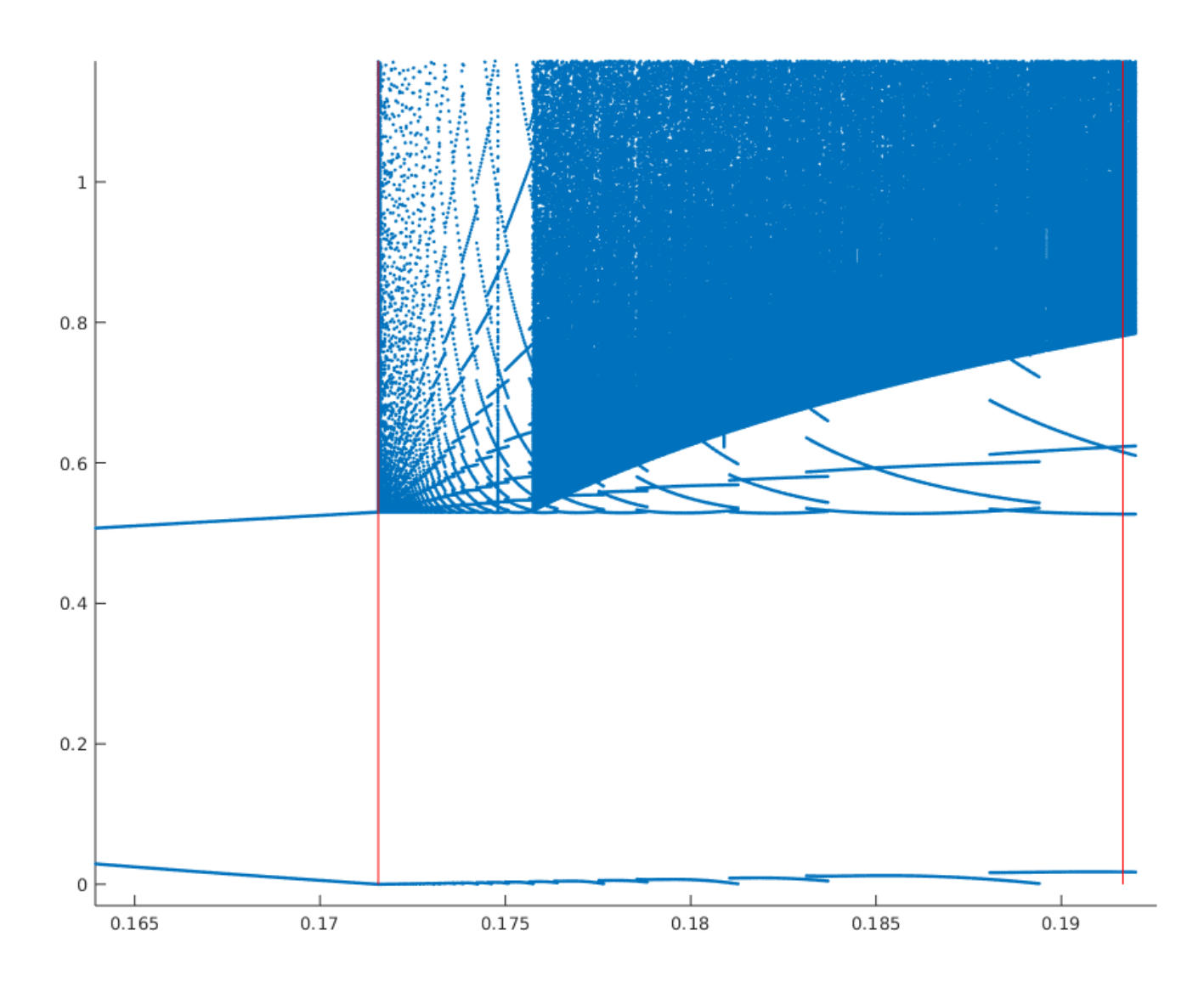}
    \caption{Magnification of Figure \ref{fig:Simul_2}, for $0.165 \leq r \leq 0.192$.}
    \label{fig:Simul_2_zoom_2}
\end{subfigure}
        
\caption{Magnifications of Figure \ref{fig:Simul_2}, focused on the extreme values of the restitution coefficient $r$.}
\label{fig:Zooms_Simul_2}
\end{figure}
\noindent
\newline
Let us now turn to Figure \ref{fig:Simul_2_zoom_2}. We recognize on the left hand side the rightmost part of the largest window of stability, corresponding to the stability region of the pattern $(\mathfrak{ab})^2(\mathfrak{cb})^2$. In particular, the vertical red line at $r = 3-2\sqrt{2} \simeq 0.17157$ corresponds to the maximal value of the restitution coefficient for which there exists an open neighbourhood of initial data around the self-similar configuration that collides according to the periodic pattern $(\mathfrak{ab})^2(\mathfrak{cb})^2$. The second vertical red line corresponds to the threshold restitution coefficient, above which no pattern (periodic or not) of collisions are known for the original four-particle system.\\
It is interesting to note that around $r = 3-2\sqrt{2} \simeq 0.17157$, there is not a sudden transition between the window of stability, and the region of apparent chaos, in which the orbits seem to fill dense parts of the interval $[0,\pi]$. Indeed, it seems that there is a transitory region (for $0.172 \leq r \leq 0.176$ approximately), where the orbits accumulate on finite number of points for $r$ fixed.\\
Unfortunately, and even though the simulation seems to indicate that new periodic patterns might exist in this transitory region, we were not able to detect any.\\
Another simulation focused on this transitory region is represented in Figure \ref{fig:Simul_3}.\\
\begin{figure}[h!]
\centering
    \includegraphics[trim = 0cm 0.5cm 0cm 0cm, width=1\linewidth]{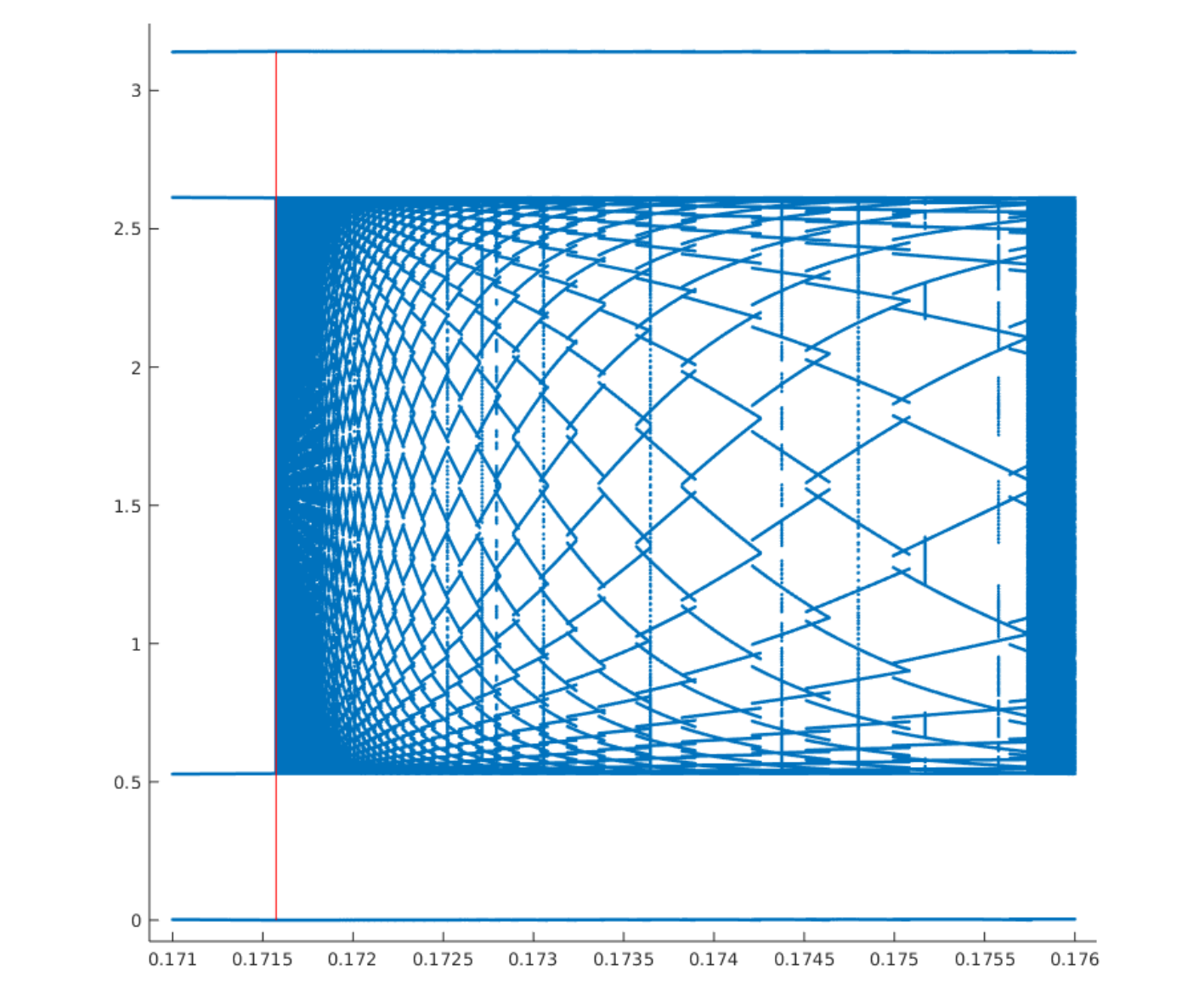}
    \caption{Representation in the $(r,\varphi)$ plane of the transitory region observed in Figure \ref{fig:Simul_2_zoom_2}, which corresponds to restitution coefficients $0.171 \leq r \leq 0.176$. As usual, the last $500$ collisions of type $\mathfrak{b}$ of $8$ trajectories presenting at least $10^4$ collisions are plotted. The step $\Delta_r$ between two restitution coefficients is $\Delta_r = 10^{-6}$ ($5001$ different values of $r$). For each restitution coefficient $r$, the initial configurations $(\theta_0,\varphi_0)$ of the trajectories are chosen on a regular grid, with $2$ values of $\theta_0$, and $4$ values of $\varphi_0$.}
    \label{fig:Simul_3}
\end{figure}
\begin{figure}[h!]
\centering
    \includegraphics[trim = 0cm 1.5cm 0cm 2cm, width=1\linewidth]{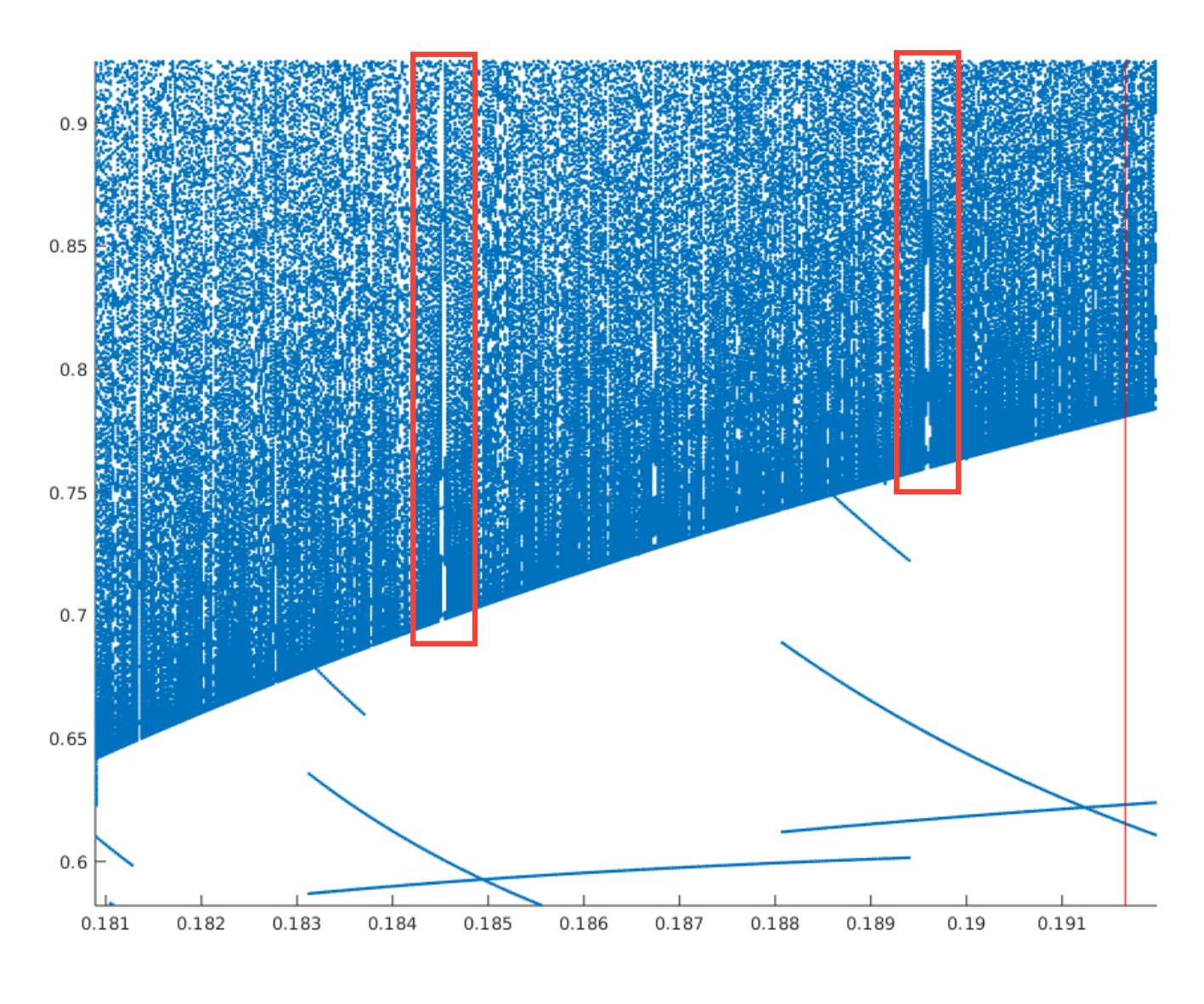}
    \caption{Magnification of Figure \ref{fig:Simul_2}, focused on the region $0.181 \leq r \leq 0.192$.}
    \label{fig:Simul_2_zoom_3}
\end{figure}
\noindent
\newline
Finally, let us turn to one last magnification of Figure \ref{fig:Simul_2}, pictured in Figure \ref{fig:Simul_2_zoom_3}. This magnification is performed in the stripe $0.181 \leq r \leq 0.192$, which corresponds to a region of apparent chaos.\\
It is clear that for most of the restitution coefficients, the orbits seem to cover a dense interval of the segment $[0,\pi]$ (although the whole segment is never filled completely). Nevertheless, and it is quite surprising, we can discern, among this densely filled region, very thin vertical stripes, in which the orbits seem to accumulate on isolated points. We can see two thin stripes in Figure \ref{fig:Simul_2_zoom_3}, highlighted by two red rectangles: a first one for $r \simeq 0.1845$, and another one for $r \simeq 0.1896$.\\
We will dedicate the last section to these thin stripes.

\subsection{Third simulation: the thin stripes of stability}

\noindent
Motivated by the observation of thin stripes in Figure \ref{fig:Simul_2_zoom_3}, we present our final simulations, corresponding to restitution coefficients around these thin stripes.\\
\begin{figure}[h!]
\label{fig:Simul_4_5_6}
\centering
\begin{subfigure}{0.3\textwidth}
    \includegraphics[trim = 2cm -1.2cm 2cm 0cm, width=\linewidth]{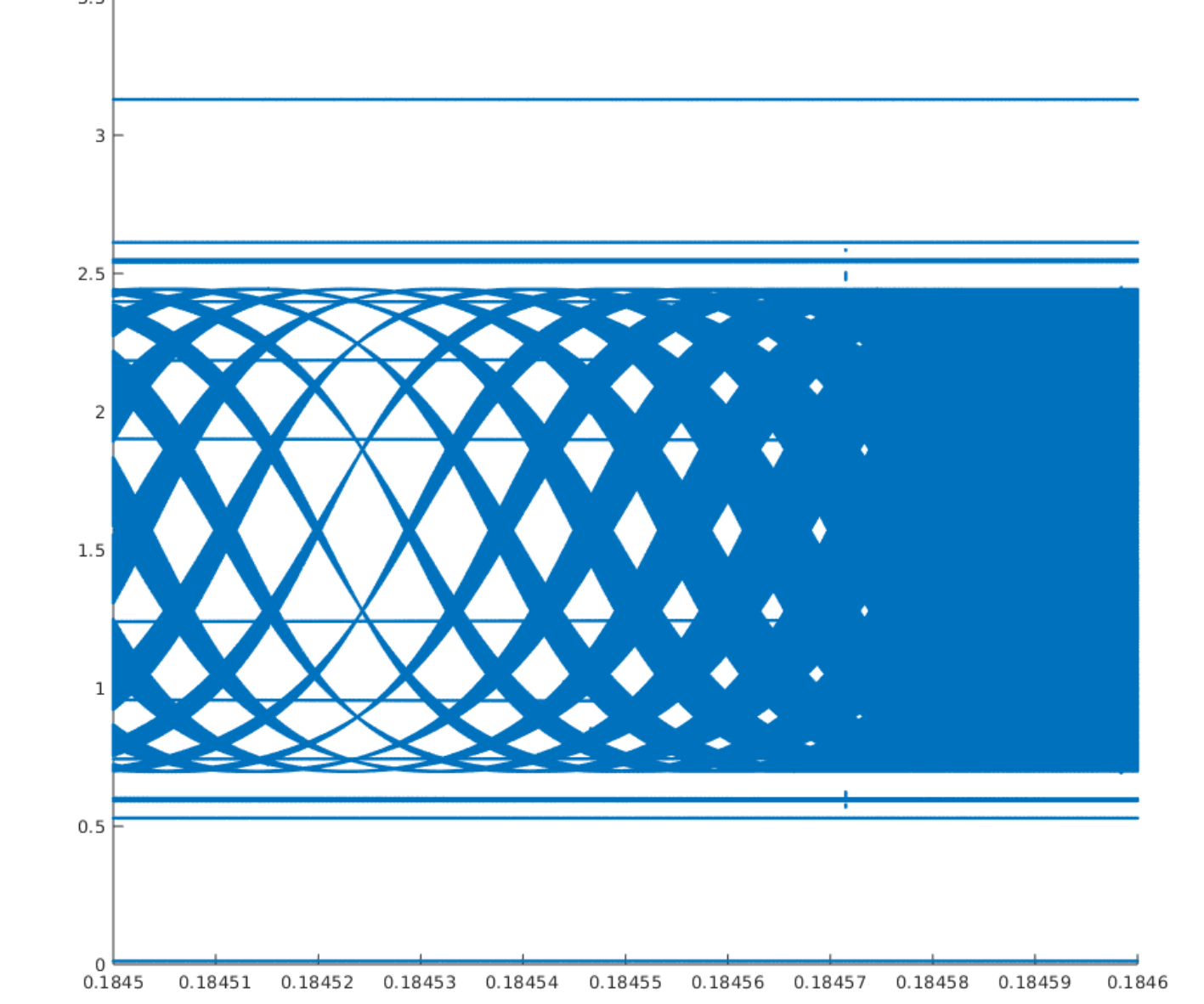}
    \caption{Representation of $8$ trajectories in the first thin stripe in the $(r,\varphi)$ plane.\newline
 The last $500$ collisions of type $\mathfrak{b}$ are plotted for each trajectory and per restitution coefficient in the first thin stripe. $0.1845 \leq r \leq 0.1846$, $\Delta_r = 10^{-7}$ ($1001$ different values of $r$).}
    \label{fig:Simul_4}
\end{subfigure}
\hfill
\begin{subfigure}{0.3\textwidth}
    \includegraphics[trim = 2cm 0.5cm 2cm 0cm, width=\linewidth]{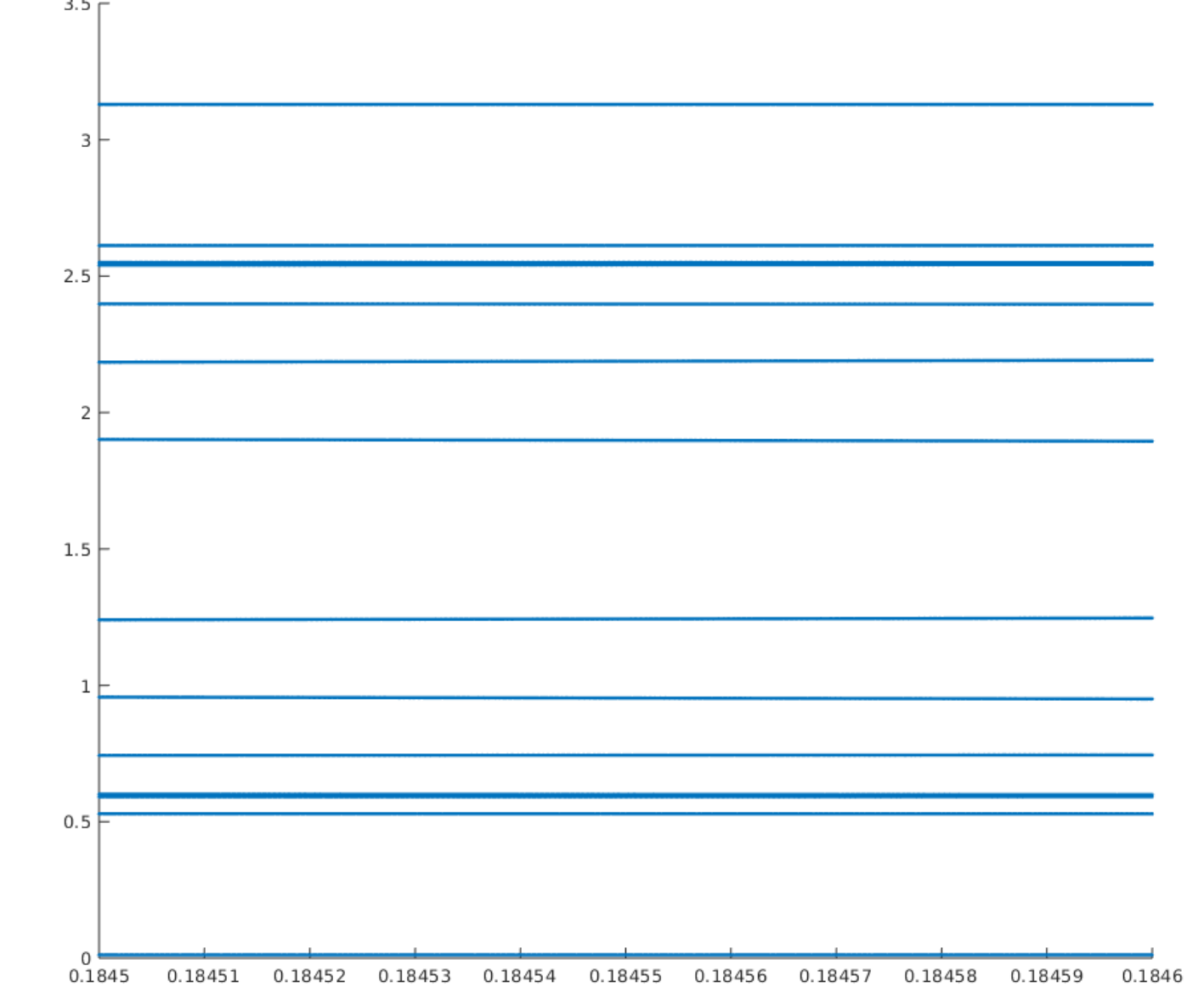}
    \caption{Representation in the $(r,\varphi)$ plane of a single trajectory per restitution coefficient in the first thin stripe.\newline
The last $500$ collisions of type $\mathfrak{b}$ are plotted for $0.1845 \leq r \leq 0.1846$, $\Delta_r = 10^{-7}$ ($1001$ different values of $r$). The initial configuration is chosen randomly.}
    \label{fig:Simul_5}
\end{subfigure}
\hfill
\begin{subfigure}{0.3\textwidth}
    \includegraphics[trim = 2cm 0.5cm 2cm -5cm, width=\linewidth]{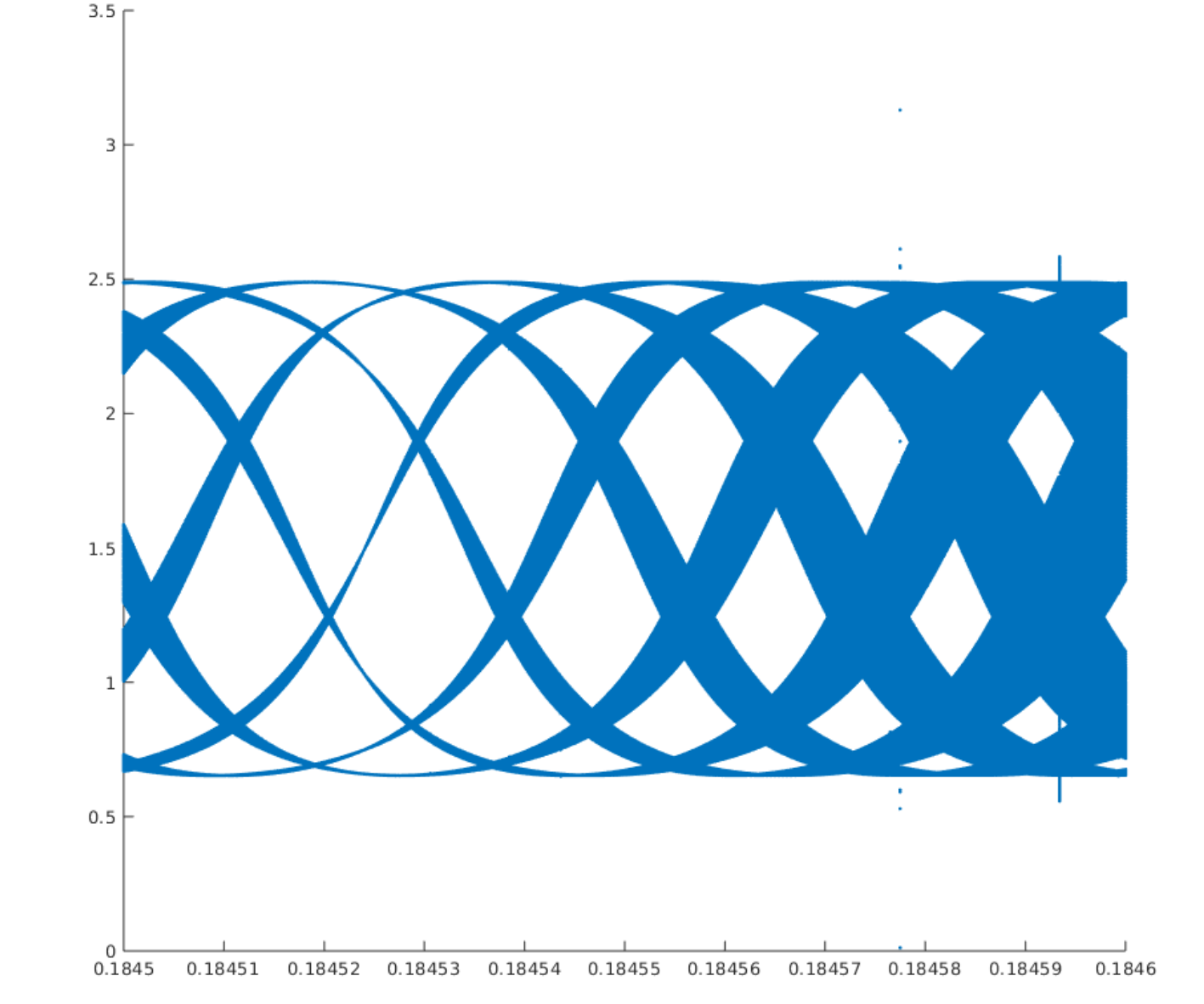}
    \caption{Representation in the $(r,\varphi)$ plane of another single trajectory per restitution coefficient in the first thin stripe.\newline
The last $500$ collisions of type $\mathfrak{b}$ are plotted for $0.1845 \leq r \leq 0.1846$, $\Delta_r = 10^{-7}$ ($1001$ different values of $r$). The initial configuration is chosen randomly.}
    \label{fig:Simul_6}
\end{subfigure}
\caption{}
\end{figure}
\newline
The first simulation concerning the thin stripes is pictured in Figure \ref{fig:Simul_4}, corresponding to $0.1845 \leq r \leq 0.1846$. On this figure, we observe clearly that the orbits do not accumulate anymore on large parts of the segment $[0,\pi]$ for $r \simeq 0.184525$. It seems that there is a particular value of $r$ for which all the trajectories accumulate on discrete sets.\\
When the value of $r$ changes slightly, the $\omega$-limit sets are not discrete anymore, and these $\omega$-limit sets cover larger and larger parts of the interval $[0,\pi]$.\\
\newline
On Figures \ref{fig:Simul_5} and \ref{fig:Simul_6} are represented the respective $\omega$-limit sets associated to two distinct trajectories. For each figure, one initial configuration is chosen randomly. Once this configuration is chosen, the trajectories starting from this fixed configuration are computed for all the different values of $r$.\\
We observe that when one considers trajectories associated to different initial data, not only the $\omega$-limit sets are different, but they are also of a different nature. Indeed, on Figure \ref{fig:Simul_5} the trajectory accumulates around an $\omega$-limit set that seems to be discrete, and that does not change a lot when $r$ varies. On Figure \ref{fig:Simul_6}, another initial configuration gives rise to a completely different $\omega$-limit set, which is not discrete in general (except for very special values of the restitution coefficient), and that changes brutally when $r$ varies.\\
In the end, Figure \ref{fig:Simul_4} is the superposition of the different $\omega$-limit sets that are obtained. It is remarkable that the $\omega$-limit sets seem to become discrete for the same values of $r$, regardless the initial configurations. This phenomenon explains why we were able to detect thin stripes in the region of apparent chaos in Figure \ref{fig:Simul_2_zoom_3}.\\
\newline
Let us also emphasize that despite we clearly observed isolated $\omega$-limit sets for special values of $r$, we did not detect any periodic pattern for the restitution coefficients close to these special values. It is possible that no stable periodic pattern exists in this region, and that all the trajectories end up attracted by a pattern that is not periodic, but stable. It is also possible that several unstable periodic patterns exist for the same value of $r$, and that each trajectory wanders around a self-similar, periodic trajectory for a while, until it leaves it for another periodic trajectory, over and over again.\\
In any case, the change of shape of the $\omega$-limit sets depending on $r$ is an intriguing phenomenon. The fact that all the $\omega$-limit sets seem to be finite unions of intervals, that might be extremely small, or extremely large, suggest that the trajectories of the spherical reduction mapping might be quasi-periodic. The $\omega$-limit sets might indeed be invariant tori, and the fact that their ``diameters'' changes with $r$ might be only a consequence of the projection of the trajectories on the $(r,\varphi)$ plane.

\section{Conclusions and outlook}
\noindent
In this work, we have examined a new pattern in which inelastic collapse can occur, which is periodic and asymmetric (in the sense that more collisions occur for the leftmost particles than the rightmost). We found an explicit initial datum that leads to collapse when $r<r_{crit.,\mathfrak{ababcb}}=5-2\sqrt{6}$. However, we also find that this explicit initial datum is unstable, in the sense that starting at some other initial datum in a small neighbourhood around the explicit, self-similar datum will generate some other sequence of collisions (that is not the periodic pattern $\mathfrak{ababcb}$) eventually. It would be interesting to study if any such periodic and asymmetric pattern is always unstable, thereby narrowing the possible (stable) patterns of collisions that can lead to inelastic collapse.\newline
In general, there are still several open problems, even for four particles: what is the upper threshold of $r$ for which inelastic collapse can occur (stable or not)? Additionally, it is not known if the periodic patterns of the form $(\mathfrak{ab})^n(\mathfrak{cb})^n$ are the only patterns that are stable in some region of $r$, and what exactly their intervals of stability are (the upper threshold of stability is conjectured to correspond to the condition $q_2(0)=0$ for the dominant eigenvector of the corresponding matrix, see \cite[Section 7]{CDKK999}). Numerically, their intervals of stability seem to be disjoint and accumulate from above on $r_{crit,3\text{ balls}}=7-4\sqrt{3}$, the threshold for three-ball inelastic collapse, but this has also not been proven yet.\newline
Moreover, to the best of our knowledge, there exists no result concerning non-periodic patterns, despite the fact that those kind of patterns might be the only observable ones outside the stability windows. To describe such mechanisms of collapse, one would need to develop new mathematical tools.
\newline
Additionally, there are more open questions that remain, concerning systems with five or more particles: what patterns lead to inelastic collapse and for which ranges of the restitution coefficient? Which of them are stable? Does there exist a similar family of patterns that are stable in some intervals, that accumulate on some threshold for collapse with one less particle? \newline
\newline
We have also introduced a new representation for four inelastic particles evolving on a one-dimensional line by reducing it to a billiard (with an unusual reflection law) on a portion of the sphere $\mathbb{S}^2$. This reduction can also equivalently be seen as a dynamical system $\Phi: \{1,2,3\}\times \mathbb{S}^2\rightarrow\{1,2,3\}\times \mathbb{S}^2$. By reducing the dimension of the system, we were able to perform efficient simulations.\newline
We have proven that collisions of a given trajectory of the original system with four particles occur in the same order for the associated trajectory, computed with the spherical reduction mapping, so that it suffices to understand the possible patterns that the spherical reduction generates. Another interesting feature is that the spherical reduction always gives an infinite sequence of ``collisions'', for arbitrary values of the restitution coefficient $r$, even if the real particles of the original inelastic four-hard-sphere system separate after some time. Nonetheless, we showed that the order of the generated sequences of ``collisions'' are not fundamentally different from the real system, and our numerical investigations on the spherical reduction mapping agree with the results in the literature.\\
It remains a lot to understand the complicated dynamics of the spherical reduction mapping. We hope that further theoretical studies of this dynamical system might help understand better the inelastic collapse phenomenon. \newline 
Moreover, we identified numerically some regimes in which our reduction has some apparent quasi-periodic behaviour that seems to be unknown so far. It is possible that invariant tori, in which the trajectories of the system lie, exist in the phase space. These regimes lie above the currently known threshold below which inelastic collapse can occur in a stable way ($r=3-2\sqrt{2}$), but below the currently known threshold where inelastic collapse can occur (stable or not; $r\simeq 0.19166$). It remains to study the precise connection between these structures and the dynamics of the original system of particles. \newline
Finally, it would be interesting to study if there exists a similar spherical reduction for systems with five or more particles.\newline

\textbf{Acknowledgements.} The authors are grateful to J. J. L. Vel\'azquez for many stimulating discussions concerning the topic of the present article. The authors are also grateful to M. Breden, for his comments concerning the interpretations of the numerical simulations.\\
\newline
The first author gratefully acknowledges the financial support, on the one hand when he was affiliated to the University of Bonn until August 2024, of the Hausdorff Research Institute for Mathematics through the collaborative research center The mathematics of emerging effects (CRC 1060, Project-ID 211504053), and the Deutsche Forschungsgemeinschaft (DFG, German Research Foundation), and on the other hand since September 2024, of the project PRIN 2022 (Research Projects of National Relevance) - Project code 202277WX43, based at the University of L'Aquila.\\
The second author gratefully acknowledges the support by the Graduiertenkolleg 2339 IntComSin of the Deutsche Forschungsgemeinschaft (DFG, German
Research Foundation, Project-ID 321821685).

E-mail address: \texttt{theophile.dolmaire@univaq.it}, \texttt{eleni.huebner-rosenau@ur.de}.

\end{document}